\documentclass{article}[12pt]
\hoffset=-15pt \voffset=-40pt \topmargin=17pt \textwidth=17cm
\textheight=22cm \evensidemargin=0.5cm \oddsidemargin=0.5cm
\parindent=0.5cm \parskip=0cm \footskip=50pt

\usepackage{amsmath}
\usepackage{amssymb}
\usepackage{extarrows}
\usepackage{hyperref}
\setcounter{MaxMatrixCols}{10}
\newtheorem{theorem}{Theorem}

\newtheorem{lemma}[theorem]{Lemma}

\newtheorem{proposition}[theorem]{Proposition}
\newtheorem{remark}[theorem]{Remark}

\newenvironment{proof}[1][Proof]{\noindent\textbf{#1.} }{\ \rule{0.5em}{0.5em}}
\usepackage{amssymb}
\usepackage{graphicx}
\usepackage[dvipsnames,usenames]{color}

\input{epsf.tex}
\usepackage[affil-it]{authblk}
\author{Ioannis Dimitriou \footnote{idimit@uoi.gr}\footnote{Corresponding author.}}
\affil{\small Department of Mathematics, 
	University of Ioannina, 
	45110, Ioannina, Greece.}
\begin{document}
\title{On Markov-dependent reflected autoregressive processes and related models}

\maketitle
\begin{abstract}
   In this paper, we study Markov-dependent reflected autoregressive processes, and other related models the analysis of which results in a vector-valued fixed-point functional equation of a certain type. In queueing terms, such processes describe the workload just before a customer arrival, which makes obsolete a fraction of the work already present, and where the interarrival time and the service time depend on a common discrete time Markov chain. Our primary aim is to derive the Laplace-Stieltjes transform vector of the steady-state workload via a recursive approach. We consider the case where given the state of the underlying Markov chain, the interarrival time and the service time are conditionally independent. Moreover, we further focus on the case where there is also additional dependence based on the Farlie-Gumbel-Morgenstern copula, as well as the case where there is a dependence based on a class of multivariate matrix-exponential distributions. The transient analysis of the Markov-modulated reflected autoregressive process with a more general dependence structure is also investigated. Finally, motivated by queueing applications, we consider two other Markov-dependent models that are described by a similar stochastic recursion: the modulated shot-noise single server queue, and the single-server queue with service time randomly dependent on the waiting time. 
    \end{abstract}
    \vspace{2mm}
	
	\noindent
	\textbf{Keywords}: {Workload; Laplace-Stieltjes transform; Recursion; Markov-modulation}

\section{Introduction}
Recently, much attention has been devoted to the analysis of reflected autoregressive processes. In the simplest scenario, such processes are described by recursions of the form $W_{n+1}=[aW_{n}+S_{n}-A_{n+1}]^{+}$ where $a\in (0,1)$ and $\{S_{n}\}_{n\in\mathbb{N}_{0}}$, $\{A_{n}\}_{n\in\mathbb{N}}$ are sequences of independent and identically distributed (i.i.d.) random variables, and independent of each other. The authors in \cite{box1} provided an extensive analysis of such a process. Several generalizations of the work in \cite{box1} where presented in \cite{box2,box3,hoo}. The restriction of these works was that $\{S_{n}\}_{n\in\mathbb{N}_{0}}$, $\{A_{n}\}_{n\in\mathbb{N}}$ were mutually \textit{independent} sequences of i.i.d. random variables. Quite recently, in \cite{dimi} the authors generalized the work in \cite{box1} by considering, among others, non-trivial dependence structures among $\{S_{n}\}_{n\in\mathbb{N}_{0}}$, $\{A_{n}\}_{n\in\mathbb{N}}$.

In this work, we focus on the case where the sequences $\{S_{n}\}_{n\in\mathbb{N}_{0}}$, $\{A_{n}\}_{n\in\mathbb{N}}$ depend on a common discrete time Markov chain. In particular, we assume that each transition of the Markov chain generates a new interarrival time $A_{n+1}$ and its corresponding service time $S_{n}$, thus, we focus on the Markov-dependent version of the process analysed in \cite{box1}. Note that the specific case of $a = 1$ corresponds to the waiting time in a single server queue with Markov-dependent interarrival and service times studied in \cite{adan2}. Markov-dependent structure of the form considered in \cite{adan2} has also been used in insurance mathematics; see \cite{albbox}. The process analysed in \cite{adan2} (i.e., for $a=1$) is a special case of the class of processes studied in \cite{asmkella}, although in \cite{adan2}, all the results were given explicitly. The case of $a=-1$ was investigated in \cite{vlasioudep} (see also \cite[Chapter 5]{vlasiou}) in the context of carousel models. In this work, we focus on the case where $a\in(0,1)$. In queueing terms, the modulated version of the recursion $W_{n+1}=[aW_{n}+S_{n}-A_{n+1}]^{+}$ where $a\in (0,1)$ refers to a generalization of the MAP/G/1 queue, i.e., it is a MAP/G/1 queue with dependencies between successive service times and between interarrival times and service times, and in which, an arrival makes obsolete a fraction of the work already present. 

Note that reflected autoregressive processes described by recursions of the form $W_{n+1}=[aW_{n}+S_{n}-A_{n+1}]^{+}$, where $a\in (0,1)$ are directly connected with queueing systems, since they model the notion of impatience or work removal due to an arrival, i.e., an arrival adds new work, but also causes also a work removal; see \cite{bouboxsig,boubox,jainsig} for related results on queues with work removal. In particular, let $W_{n}$, denote the amount
of work in the system found by customer $n$. Then, the amount of work, $W_{n+1}$ found
by the next arriving customer will be equal to $W_{n+1}=[W_{n}+S_{n}-A_{n+1}-D_{n+1}]^{+}$, where $D_{n+1}$ is the amount of work destroyed by the arrival of the $n+1$ customer. In our case $D_{n+1}:=(1-a)W_{n}$, i.e., a portion of the work found upon his/her arrival.   

Most queueing models assume that interarrival and service
times are independent and identically distributed, and represent
each as a renewal process. This leads to analytically tractable models that are often poor representations of realistic applications where correlations/dependencies between service and interarrival times do exist, such as in many service, telecommunication, manufacturing systems and in ruin theory; see e.g. \cite{adan2,albbox,cive,wang,wu1,wu2} and in \cite[Chapters VII, IX, XIII]{asmalb}. Thus, it is of highest importance to incorporate dependencies in our building models. However, to our best knowledge, there is a lack of progress on analytical results regarding Markov-dependent reflected autoregressive processes, and other related models the analysis of which results in a fixed-point vector-valued functional equation of a certain form.

To fill this gap, our primary motivation is to consider vector-valued generalizations of the work in \cite{box1}. More importantly, our goal is to consider advanced semi-Markovian dependence structures on problems described by reflected autoregressive processes and related models, the analysis of which results in similar vector-valued fixed point functional equations.
\subsection{Our contribution}
The main contributions of the article are the following. 
\begin{enumerate}
    \item To the best of the author's knowledge, this paper is the first in which recursions of the form $W_{n+1}=[aW_{n}+S_{n}-A_{n+1}]^{+}$, where $a\in (0,1)$ are studied under a semi-Markov dependent framework introduced in \cite{adan2}. We also illustrate with numerical results the influence of auto-correlation
and cross-correlation of the interarrivals and service times on the mean workload.
\item In addition to the semi-Markov dependence structure used in \cite{adan2,vlasioudep}, in which given the state of the Markov chain at times $n$, $n+1$, the distributions of $A_{n+1}$, $S_{n}$ are independent of one another for all $n$ (although their distributions depend on the state of the Markov chain), we further consider (for $a\in(0,1)$) the case where there is an additional dependence among  $\{S_{n}\}_{n\in\mathbb{N}_{0}}$, $\{A_{n}\}_{n\in\mathbb{N}}$ based on Farlie-Gumbel-Morgenstern (FGM) copula. More precisely, $\{(S_{n},A_{n+1})\}_{n\in\mathbb{N}_{0}}$ forms a sequence of i.i.d. random vectors with a distribution function defined by the FGM copula, and depends also on the state of the underlying discrete time Markov chain. Among a quite rich pool of copulas (see e.g., \cite{nelsen}), we chose the FGM family due to its polynomial structure that leads to tractable and analytic results. To our best knowledge there are no analytic results referring to the derivation of the Laplace-Stieltjes (LST) transform of the steady-state workload in a Markov-dependent single server queue where interarrival times and service times are also dependent based on copulas (we just mention \cite{trapa,wang,wang1} where either moments or moment approximations of steady-state workload were derived). We also numerically illustrate the effect of the dependence via FGM copula on the mean workload. We observed that the higher the value of the dependence parameter, the lower the mean workload. Moreover, it seems that the dependence parameter $\theta$ can be used as a tool for smoothing cross-correlation, for changing its sign, or even as a tool for vanishing cross-correlation (see Section \ref{num}). We also numerically illustrated the effect of the autoregressive parameter $a$ on the number of iterations that we need in order to provide numerical examples. The more we increase $a$, the more product form terms are required to derive numerical results.
\item We also consider the case where we model the dependence structure of $\{(S_{n},A_{n+1})\}_{n\in\mathbb{N}_{0}}$ by using a bivariate matrix-exponential distribution, which is further dependent on the state of the underlying discrete time Markov chain, and thus, we considerably generalize the results in \cite{bad}, in which the scalar case with $a=1$ was studied.
\item The time-dependent analysis of a Markov-modulated reflected autoregressive process with a quite general dependence structure is also investigated. In particular, we investigate the "autoregressive" analogue of the work in \cite{regte}, in which the authors developed a matrix Wiener-Hopf approach to study an M/G/1 queue with Markov-modulated arrivals and service requirements. We firstly consider the case where interarrival times and service times depend on the state of an underlying finite state Markov chain. Secondly, on top of that, we further consider the case where the interarrival time depends also on the length of the previous service time. 
\item We also investigate two other Markov-dependent models the analysis of which results in vector-valued functional equations of a form that allows a similar solution method as the one considered in the above discussed models. In particular, we first focus on a Markov-dependent queue where the server's speed is workload proportional, i.e., a modulated shot-noise queue. In such a case, we are dealing with a modulated stochastic recursion of the form $W_{n+1}=[e^{-rA_{n+1}}(W_{n}+S_{n})+C_{n+1}]^{+}$, $r>0$, where the distributions of $S_{n}$, $C_{n+1}$ depend on the state of a Markov chain at times $n$, $n+1$. For recent results on shot-noise queueing models, see \cite{shots}. Then, we focus on the analysis of a single-server queue where the service times are dependent on the waiting time, i.e., we are dealing with a modulated stochastic recursion of the form $W_{n+1}=[W_{n}+[S_{n}-cW_{n}]^{+}-A_{n+1}]^{+}$, $c>0$, where $\{S_{n}\}_{n\in\mathbb{N}}$ is an independent sequence of $\exp(\mu)$ distributed random variables. In both cases the Laplace-Stieltjes transform of the steady-state workload is derived via a recursive approach. 
\end{enumerate}

In all the above mentioned models we arrive at a vector-valued fixed point functional equation for the \textit{transform vector} $\tilde{Z}(s)$ of the steady-state workload of the following form:
\begin{equation}
    \tilde{Z}(s)=H(s)\tilde{Z}(\zeta(s))+\tilde{V}(s),\,Re(s)\geq 0,\label{ji1}
\end{equation}
where $H(s)$, $\tilde{V}(s)$ are known matrix, and vector functions, respectively. In the setting of Sections \ref{sec1}-\ref{bila} $\zeta(s)=as$, while in the setting of Section \ref{related}, $\zeta(s)=se^{-rt}$, $r,t>0$ (subsection \ref{shot}), and $\zeta(s)=s+\mu c$ (subsection \ref{deps}). Finally, the analysis of the model in Section \ref{mm} results in the following vector-valued  fixed point functional equation:
\begin{equation}
        Z(r,s,\eta)=G(r,s,\eta)Z(r,as,\eta)+K(r,s,\eta),\,Re(s)\geq 0,\,Re(\eta)\geq 0,|r|<1,\label{ji2}
\end{equation}
where $G(r,s,\eta)$, $K(r,s,\eta)$ are known matrix, and vector functions, respectively. 

\subsection{Structure of the paper}
The paper is organized as follows. In Section \ref{sec1} we present the basic model by studying the recursion $W_{n+1}=[aW_{n}+S_{n}-A_{n+1}]^{+}$, $a\in (0,1)$ and where the distributions of $S_{n}$, $A_{n+1}$ depend on the state of a Markov chain at times $n$, $n+1$. We assume both the case where interarrival times are exponentially distributed and the case where it is of phase-type. In Section \ref{fgm}, we further assume that given the state of a Markov chain at times $n$, $n+1$, the sequences $\{S_{n}\}_{n\in\mathbb{N}}$, $\{A_{n}\}_{n\in\mathbb{N}}$ are dependent based on an FGM copula. Section \ref{bila} is devoted to the case where $X_{n}:=S_{n}-A_{n+1}$ follows a bilateral matrix exponential distribution that is dependent on the state of the underlying discrete time Markov chain. In all above cases we focus on the stationary behaviour of the workload process. In Section \ref{mm}, we analyze the time-dependent Markov-modulated version of the reflected autoregressive process and focus on the derivation of the Laplace-Stieltjes transform of the transient distribution of the workload. In subsection \ref{sub}, we considered an even more general dependence structure, where interarrival time depends also on the length of the previous service time. Section \ref{related} is devoted to the analysis of two Markov-dependent models that can be analyzed with a similar solution method: The analysis of a modulated shot-noise queue, i.e., the server's speed is workload proportional, is considered in subsection \ref{shot}. Subsection \ref{deps} is devoted to the analysis of a modulated Markovian single server queue in which the service time of a customer depends on his/her waiting time. Two simple numerical examples are presented in Section \ref{num} to investigate the impact of auto-correlation and cross-correlation of the interarrivals and service times on the mean workload based on the derivations of Section \ref{sec1}. We further numerically investigate the impact of the dependence parameter $\theta$ of the FGM copula on the mean workload, as well as on the cross-correlation based on the derivations of Section \ref{fgm}. The impact of the autoregressive parameter $a$ on the number of iterations we need in order to derive numerical results is also discussed. Concluding remarks and suggestions for future research are presented in Section \ref{con}.
\section{Basic model and analysis}\label{sec1}
In the following, we define the main setting and study the case where the interarrival times and the service
times depend on a common discrete-time Markov chain. Such a model allows dependencies between interarrival and service times. 

Consider a FIFO single-server queue, and let $T_{n}$ be the $n$th arrival to the system with $T_{1}=0$. Define also $A_{n}=T_{n}-T_{n-1}$, $n=2,3,\ldots$, i.e., is the time between the $n$th and $(n-1)$th arrival. Let $S_{n}$ be the service time of the $n$th arrival, $n\geq 1$. We assume that the sequences $\{A_n\}_{n\in\mathbb{N}}$ and
$\{S_n\}_{n\in\mathbb{N}_{0}}$ are autocorrelated as well as cross-correlated. The distributions of the interarrival and service times are regulated by an irreducible discrete-time Markov chain $\{Y_{n}, n\geq 0\}$ with state space $E=\{1, 2,...,N\}$, one-step transition probability matrix $P:=(p_{i,j})_{i,j\in E}$, and stationary distribution $\tilde{\pi}:=(\pi_{1},\ldots,\pi_{N})^{T}$. Let $W_{n}$ be the amount of work in the system just before the $n$th customer
arrival. Such an arrival adds $S_{n}$ work, but also makes obsolete a fixed fraction $1 - a$ of
the work that is already present in the system. Thus, we are dealing with a recursion of the form $W_{n+1}=[aW_{n}+S_{n}-A_{n+1}]^{+}$. We further assume that for $n\geq 0$, $x,y\geq 0$, $i,j=1,\ldots,N$:
\begin{equation}
    \begin{array}{l}
         P(A_{n+1}\leq x,S_{n}\leq y,Y_{n+1}=j|Y_{n}=i,A_{2},\ldots,A_{n},S_{1},\ldots,S_{n-1},Y_{1},\ldots,Y_{n-1}) \vspace{2mm}\\
        =  P(A_{n+1}\leq x,S_{n}\leq y,Y_{n+1}=j|Y_{n}=i)=p_{i,j}P(A_{n+1}\leq x,S_{n}\leq y|Y_{n}=i,Y_{n+1}=j)\vspace{2mm}\\=p_{i,j}F_{S,i}(y)G_{A,j}(x),
    \end{array}\label{mp1}
\end{equation}
where $F_{S,i}(.)$, $G_{A,j}(.)$ denote the distribution functions of service and interarrival times, given $Y_{n}=i$, $Y_{n+1}=j$, respectively. Note that $A_{n+1}$, $S_{n}$, $Y_{n+1}$ are independent of the past given $Y_{n}$, and $A_{n+1}$, $S_{n}$ are conditionally independent given $Y_{n}$, $Y_{n+1}$.

Denote by $I$ the $N\times N$ identity matrix, and by $M^{T}$ the transpose of a matrix (vector) $M$. Let $Z_{i}^{n}(s):=E\left(e^{-sW_{n}}1_{\{Y_{n}=i\}}\right)$, $Re(s)\geq 0$, $i=1,\ldots,N$, $n\geq 0$, and assuming the limit exists, define $Z_{i}(s)=\lim_{n\to\infty}Z_{i}^{n}(s)$, $i=1,\ldots,N$. Let also $\tilde{Z}(s)=(Z_{1}(s),\ldots,Z_{N}(s))^{T}$, and $B^{*}(s)=diag(\beta_{1}^{*}(s),\ldots,\beta_{N}^{*}(s))$, where $\beta_{i}^{*}(s):=\int_{0}^{\infty}e^{-st}dF_{S,i}(t)$, $i=1,\ldots,N$.
\subsection{The case where interarrival times are exponentially distributed}\label{xc}
In this section, we assume that $G_{A,j}(x)=1-e^{-\lambda_{j}x}$, $j=1,\ldots,N$. Denote also $L(s):=diag(\frac{\lambda_{1}}{\lambda_{1}-s},\ldots,\frac{\lambda_{N}}{\lambda_{N}-s})$, $\Lambda=diag(\lambda_{1},\ldots,\lambda_{N})$. The next theorem provides a system of equations for the unknown functions $Z_{i}(s)$, $i=1,\ldots,N$.
\begin{theorem}\label{th}
The transforms $Z_{j}(s)$, $j=1,\ldots,N$ satisfy the system
\begin{equation}
    Z_{j}(s)=\frac{\lambda_{j}}{\lambda_{j}-s}\sum_{i=1}^{N}p_{i,j}\beta_{i}^{*}(s)Z_{i}(as)-\frac{s}{\lambda_{j}-s}v_{j},\label{op}
\end{equation}
where $v_{j}:=\sum_{i=1}^{N}p_{i,j}\beta_{i}^{*}(\lambda_{j})Z_{i}(a\lambda_{j})$, $j=1,\ldots,N$. Equivalently, in matrix notation, the transform vector $\tilde{Z}(s)$ satisfies
\begin{equation}
    \tilde{Z}(s)=H(s)\tilde{Z}(as)+\tilde{V}(s),
    \label{bhj}
\end{equation}
where $H(s)=L(s)P^{T}B^{*}(s)$, $\tilde{V}(s):=sF(s)\tilde{v}$, $F(s)=-\Lambda^{-1}L(s)$, $\tilde{v}:=(v_{1},\ldots,v_{N})^{T}$. 
\end{theorem}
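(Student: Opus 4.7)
The plan is to apply $E[e^{-s\cdot}\,1_{\{Y_{n+1}=j\}}]$ to both sides of the Lindley-type recursion $W_{n+1} = [aW_n + S_n - A_{n+1}]^+$ and then pass to the stationary limit. First I would condition on $(Y_n, Y_{n+1}) = (i, j)$ and use (\ref{mp1}) together with the observation that $W_n$ is a function of the history strictly before time $n+1$, so given $Y_n = i$ it is independent of $(S_n, A_{n+1}, Y_{n+1})$. Unwinding (\ref{mp1}) then tells me that, conditional on $\{Y_n = i, Y_{n+1} = j\}$, the triple $(W_n, S_n, A_{n+1})$ is mutually independent with respective marginals the posterior of $W_n$ given $Y_n = i$, the law $F_{S|i}$, and $\mathrm{Exp}(\lambda_j)$; in particular, the conditional law of $W_n$ does not depend on $j$, which is exactly what allows me to split off the $p_{i,j}$ factor cleanly.

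The core computation is to integrate the reflected exponent against $A \sim \mathrm{Exp}(\lambda_j)$. Setting $U := aW_n + S_n \geq 0$ and conditioning on $U = u$, a direct split of the integral over $A$ at the point $u$ gives
\[
E\bigl[e^{-s[U-A]^+}\mid U=u\bigr] \;=\; \frac{\lambda_j}{\lambda_j - s}\,e^{-su} \;-\; \frac{s}{\lambda_j - s}\,e^{-\lambda_j u}.
\]
Taking expectations against the law of $U$ conditional on $Y_n = i$, and using the conditional independence of $W_n$ and $S_n$ there to factor $E[e^{-sU}1_{\{Y_n=i\}}] = \beta_i^*(s)\,Z_i^n(as)$ (and similarly at $s = \lambda_j$), I multiply by $p_{i,j}$ and sum over $i$ to obtain the pre-limit identity
\[
Z_j^{n+1}(s) \;=\; \frac{\lambda_j}{\lambda_j - s}\sum_{i=1}^{N} p_{i,j}\,\beta_i^*(s)\,Z_i^n(as) \;-\; \frac{s}{\lambda_j - s}\sum_{i=1}^{N} p_{i,j}\,\beta_i^*(\lambda_j)\,Z_i^n(a\lambda_j).
\]
Letting $n \to \infty$ and invoking the assumed existence of $Z_i(s) = \lim_n Z_i^n(s)$ yields (\ref{op}), with the second sum being precisely $v_j$.

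For the vector form (\ref{bhj}) I stack the $N$ scalar equations: the diagonal matrix $L(s)$ pulls out $\lambda_j/(\lambda_j - s)$, the transpose $P^T$ places $p_{i,j}$ in entry $(j,i)$, and $B^*(s)$ brings in $\beta_i^*(s)$, so the homogeneous part is exactly $L(s)P^T B^*(s)\tilde{Z}(as)$. The inhomogeneous coefficient rewrites as $-s/(\lambda_j - s) = s\cdot(-1/\lambda_j)\cdot\lambda_j/(\lambda_j - s)$, which identifies the diagonal of $-\Lambda^{-1}L(s) = F(s)$ and gives $\tilde{V}(s) = sF(s)\tilde{v}$. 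I expect the only real obstacle to be the conditional-independence bookkeeping in the opening step; once the triple $(W_n, S_n, A_{n+1})$ is legitimately decoupled given $(Y_n, Y_{n+1})$, the remainder is the standard exponential-tail trick that converts the reflection barrier $[\cdot]^+$ into a rational combination of two Laplace--Stieltjes transforms, and the matrix recasting is then purely a notational exercise.
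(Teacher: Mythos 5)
Your proposal is correct and follows essentially the same route as the paper: condition on $(Y_n,Y_{n+1})$, exploit the conditional independence from \eqref{mp1} to decouple $(W_n,S_n,A_{n+1})$, integrate the reflected exponent against the $\mathrm{Exp}(\lambda_j)$ interarrival to obtain $\frac{\lambda_j}{\lambda_j-s}e^{-su}-\frac{s}{\lambda_j-s}e^{-\lambda_j u}$, factor the resulting transforms, and pass to the stationary limit before recasting in matrix form. Your introduction of the auxiliary $U=aW_n+S_n$ and the explicit verification that $W_n$ is independent of $(S_n,A_{n+1},Y_{n+1})$ given $Y_n$ (with conditional law not depending on $j$) are cosmetic reorganisations of what the paper does implicitly inside the chain of equalities, not a different method.
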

\begin{proof}
From the recursion $W_{n+1}=[aW_{n}+S_{n}-A_{n+1}]^{+}$ we obtain the following equation for the transforms $Z_{j}^{n+1}(s)$, $j=1,\ldots,N$:
\begin{displaymath}
    \begin{array}{rl}
        Z_{j}^{n+1}(s)= &E\left(e^{-sW_{n+1}}1_{\{Y_{n+1}=j\}}\right)=\sum_{i=1}^{N}P(Y_{n}=i)E\left(e^{-sW_{n+1}}1_{\{Y_{n+1}=j\}}|Y_{n}=i\right)  \vspace{2mm}\\
         =& \sum_{i=1}^{N}P(Y_{n}=i)E\left(e^{-s[aW_{n}+S_{n}-A_{n+1}]^{+}}1_{\{Y_{n+1}=j\}}|Y_{n}=i\right)\vspace{2mm}\\
         =& \sum_{i=1}^{N}P(Y_{n}=i)p_{i,j}E\left(e^{-s[aW_{n}+S_{n}-A_{n+1}]^{+}}|Y_{n+1}=j,Y_{n}=i\right)\vspace{2mm}\\
         =&\sum_{i=1}^{N}P(Y_{n}=i)p_{i,j} \left[E\left(\int_{0}^{aW_{n}+S_{n}}e^{-s(aW_{n}+S_{n}-y)}\lambda_{j}e^{-\lambda_{j}y}dy|Y_{n}=i\right)+E\left(\int_{aW_{n}+S_{n}}^{\infty}\lambda_{j}e^{-\lambda_{j}y}dy|Y_{n}=i\right) \right]\vspace{2mm}\\
         =&\sum_{i=1}^{N}P(Y_{n}=i)p_{i,j}E\left(\lambda_{j}e^{-s(aW_{n}+S_{n})}\int_{0}^{aW_{n}+S_{n}}e^{-(\lambda_{j}-s)y}dy+\int_{aW_{n}+S_{n}}^{\infty}\lambda_{j}e^{-\lambda_{j}y}dy|Y_{n}=i\right)\vspace{2mm}\\
         =&\sum_{i=1}^{N}P(Y_{n}=i)p_{i,j}E\left(\frac{\lambda_{j}}{\lambda_{j}-s}e^{-s(aW_{n}+S_{n})}(1-e^{-(\lambda_{j}-s)(aW_{n}+S_{n})})+e^{-\lambda_{j}(aW_{n}+S_{n})}|Y_{n}=i\right)\vspace{2mm}\\
         =&\sum_{i=1}^{N}P(Y_{n}=i)p_{i,j}E\left(\frac{\lambda_{j}e^{-s(aW_{n}+S_{n})}-se^{-\lambda_{j}(aW_{n}+S_{n})}}{\lambda_{j}-s}|Y_{n}=i\right)\vspace{2mm}\\
         =&\sum_{i=1}^{N}p_{i,j}\left[\frac{\lambda_{j}}{\lambda_{j}-s}Z_{i}^{n}(as)\beta_{i}^{*}(s)-\frac{s}{\lambda_{j}-s}Z_{i}^{n}(a\lambda_{j})\beta_{i}^{*}(\lambda_{j})\right]\vspace{2mm}\\
         =&\frac{\lambda_{j}}{\lambda_{j}-s}\sum_{i=1}^{N}p_{i,j}Z_{i}^{n}(as)\beta_{i}^{*}(s)-\frac{s}{\lambda_{j}-s}\sum_{i=1}^{N}p_{i,j}Z_{i}^{n}(a\lambda_{j})\beta_{i}^{*}(\lambda_{j}).
    \end{array}
\end{displaymath}
Letting $n\to\infty$ so that $Z_{j}^{n}(s)$ tends to $Z_{j}(s)$ we get \eqref{op}. Writing the resulting equations in matrix form we get \eqref{bhj}. 
\end{proof}

After $n-1$ iterations of \eqref{bhj} we get:
\begin{equation}
    \tilde{Z}(s)=s\sum_{k=0}^{n-1}a^{k}\prod_{m=0}^{k-1}H(a^{m}s)F(a^{k}s)\tilde{v}+\prod_{m=0}^{n-1}H(a^{m}s)\tilde{Z}(a^{n}s).\label{iterr1}
\end{equation}

In proving the convergence of \eqref{iterr1} as $n\to\infty$ we use the following Lemma.
\begin{lemma}\label{lemma1}
    For $n=0,1,\ldots,$ $||\prod_{m=0}^{n}H(a^{m}s)||\leq c_{s}(\tilde{U}\tau)^{n}$, for $0\leq Re(s)< min_{\{j=1,\ldots,N\}}\{\lambda_{j}a^{-m}\}$, with $\tau\leq 1$, $c_{s}<\infty$, $\tilde{U}<\infty$, and where $||C||=max_{i,j}|C_{i,j}|$ is a matrix norm of $C\in \mathbb{C}^{N\times N}$.
\end{lemma}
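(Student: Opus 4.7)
The plan is to work with a submultiplicative matrix norm equivalent to the stated max-entry norm $\|\cdot\|$ (any two norms on $\mathbb{C}^{N\times N}$ are equivalent up to a factor depending only on $N$), and to factor $H(s) = L(s)P^T B^*(s)$ so as to estimate each piece separately along the product. A natural choice is the $\ell_1$-induced operator norm $\|\cdot\|_1$ (maximum absolute column sum), which is submultiplicative. Row-stochasticity of $P$ gives $\|P^T\|_1 = 1$; the classical Laplace--Stieltjes bound $|\beta_i^*(s)| \leq \beta_i^*(\mathrm{Re}(s)) \leq 1$ for $\mathrm{Re}(s)\geq 0$ yields $\|B^*(a^m s)\|_1 \leq 1$; and the diagonal factor $L(a^m s)$ has entries satisfying $|\lambda_j/(\lambda_j - a^m s)| = 1 + O(a^m|s|)$ as $m\to\infty$, via the expansion $1/(1-a^m s/\lambda_j) = 1 + a^m s/\lambda_j + O(a^{2m}|s|^2)$.

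Combining these estimates via submultiplicativity bounds $\|\prod_{m=0}^n H(a^m s)\|_1$ by $\prod_{m=0}^n \max_j |\lambda_j/(\lambda_j - a^m s)|$. Since $a\in(0,1)$ and $\sum_m a^m|s| < \infty$, the infinite product converges to a finite limit, yielding a finite $c_s$. At this stage the bound is $c_s$, independent of $n$ --- sufficient for boundedness of the product, which is enough to justify absolute convergence of the series in the iterated identity, but not yet for the geometric decay factor $\tau^n$.

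To extract $\tau^n$ with $\tau<1$, I would exploit the spectral gap of $P$: under irreducibility and aperiodicity, the Perron eigenvalue $1$ of $P^T$ is simple with right eigenvector $\tilde\pi$, and the remaining eigenvalues lie in a disk $\{|z|\leq r\}$ for some $r<1$. Writing $P^T = \tilde\pi\mathbf{1}^T + R$ with $R:=P^T-\tilde\pi\mathbf{1}^T$ of spectral radius $r$, and noting $R^n = (P^T)^n - \tilde\pi\mathbf{1}^T$, I would propagate this splitting through the product $\prod H(a^m s) = \prod L(a^m s)(\tilde\pi\mathbf{1}^T + R) B^*(a^m s)$: the $R$-contribution provides a factor of order $r^{n+1}$ by Gelfand's formula applied to a compact initial block, while the rank-one piece $\tilde\pi\mathbf{1}^T$ collapses each intermediate product to scalars $\mathbf{1}^T L(a^k s) B^*(a^k s)\tilde\pi$ whose product over $k$ converges. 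The main obstacle is precisely the rank-one contribution: since $P^T$ acts as the identity on the Perron line, any decay on that component must come entirely from the scalar factors $|\lambda_j/(\lambda_j - a^m s)|$ and $|\beta_i^*(a^m s)|$, which individually tend to $1$; showing that their interaction nevertheless produces a geometric $\tau^n$ bound, uniformly in $\mathrm{Re}(s)\geq 0$, is the delicate estimate on which the lemma ultimately rests.
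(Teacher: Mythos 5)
Your route diverges from the paper's, and your worry at the end is well placed. The paper's proof is shorter and more direct than yours: it uses the max-entry norm $\|C\|=\max_{i,j}|C_{i,j}|$, observes that $L(a^ms)\to I$ and $B^*(a^ms)\to I$, and asserts that for $m$ large enough $\|L(a^ms)\|=\|B^*(a^ms)\|=1$ while $\|P^T\|=\max_{i,j}p_{i,j}=\tau<1$; from this it concludes $\|H(a^ms)\|\leq\tau$ eventually, hence the geometric bound with some $c_s$ absorbing the finitely many initial factors. Your approach instead replaces the norm by a submultiplicative one (the $\ell_1$-induced norm), which gives $\|P^T\|_1=1$, so the scalar factors can only produce a \emph{bounded} product, and you then try to manufacture the decay from the spectral gap of $P$.

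Your obstacle about the rank-one (Perron) component is a genuine observation, not a failure of nerve: in \emph{any} submultiplicative matrix norm $\|P^T\|\geq\rho(P^T)=1$, so the decomposition $\|\prod H(a^ms)\|\leq\prod\|L(a^ms)\|\,\|P^T\|\,\|B^*(a^ms)\|$ can never yield $\tau<1$. The paper avoids this by choosing a non-submultiplicative norm, but that choice creates two issues it does not address. First, $\max_{i,j}p_{i,j}<1$ is an additional assumption on $P$: it fails, for instance, for the transition matrix $P=\bigl(\begin{smallmatrix}0&1\\1&0\end{smallmatrix}\bigr)$ used in the paper's own Example~1. Second, because the max-entry norm is not submultiplicative, the step from $\|H(a^ms)\|\leq\tau$ (for each factor individually) to $\|\prod_{m=0}^n H(a^ms)\|\leq c_s\tau^n$ for the \emph{product} of non-diagonal matrices requires a factor $N$ per multiplication, so the honest bound is $c_s(N\tau)^n$, which is geometric only if $\tau<1/N$. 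Your bounded-product conclusion is, in fact, the version consistent with how the lemma is used: in equation \eqref{iterr} the summand already carries the explicit prefactor $a^n$, so convergence of the series and of the tail term only needs $\|\prod_{m=0}^{n-1}H(a^ms)\|\leq c_s$. Indeed the paper's Theorem~\ref{th1} keeps a nontrivial infinite product $\prod_{m=0}^\infty H(a^ms)\tilde\pi$ in the solution, which would be identically zero if the stated geometric bound $c_s\tau^n$ with $\tau<1$ held. So the gap you flag is real, but the remedy is not to fight for $\tau<1$; it is to prove exactly the bounded-product estimate your first two paragraphs establish and then let the $a^n$ and $|r|<1$ prefactors in the iterated identities supply the geometric decay.
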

\begin{proof}
Note that $H(a^{m}s)=L(a^{m}s)P^{T}B^{*}(a^{m}s)$, and $L(a^{m}s)\to I$, $B^{*}(a^{m}s)\to I$, as $m\to\infty$. Thus, as $m\to \infty$, $||B^{*}(a^{m}s)||\to 1$, $||L^{*}(a^{m}s)||\to 1$. Clearly, for $Re(s)\geq 0$, $m=0,1,\ldots$, $||B^{*}(a^{m}s)||\leq 1$. $L(a^{m}s)$ is well defined for $Re(s)<min_{\{j=1,\ldots,N\}}\{\lambda_{j}a^{-m}\}$. Keeping $s$ in that region, there exists a $K_{s,m}<\infty$ such that $||L^{*}(a^{m}s)||\leq K_{s,m}$. In particular, this term is close to 1 as $m$ takes large values for any $s$, i.e., for some $M>0$, such that $m\geq M$, $||L^{*}(a^{m}s)||\leq \tilde{U}_{m}<\infty$. Thus, for $m\geq M$, 
\begin{displaymath}
   || H(a^{m}s)||\leq ||L(a^{m}s)||||P^{T}||||B^{*}(a^{m}s)||\leq ||L(a^{m}s)||||P^{T}||\leq \tilde{U}_{m}\tau,
\end{displaymath}
where $\tau:=max_{\{i,j=1,\ldots,N\}}\{p_{i,j}\}$. Hence, for $\tilde{U}=max_{\{m=0,1,\ldots,n\}}\{\tilde{U}_{m}\}$, there exists a $c_{s}<\infty$, such that $||\prod_{m=0}^{n}H(a^{m}s)||\leq c_{s}(\tilde{U}\tau)^{n}$, for all $n=0,1,\ldots$.
\end{proof}

Thus,
\begin{equation}
    \tilde{Z}(s)=s\sum_{n=0}^{\infty}a^{n}\prod_{m=0}^{n-1}H(a^{m}s)F(a^{n}s)\tilde{v}+\lim_{n\to\infty}\prod_{m=0}^{n}H(a^{m}s)\tilde{Z}(a^{n+1}s).\label{iterr}
\end{equation}
\begin{remark}
    Note that the $(i,j)$-element, $i,j=1,...,N,$ of the matrix $H(sa^m)$ is equal to
\begin{displaymath}
p_{j,i}\frac{\lambda_{i}}{\lambda_{i}-sa^{m}}\beta_{j}^{*}(sa^{m}).
\end{displaymath}
Note that in the scalar case, that is, in the non-modulated case, the corresponding product on the right-hand side of \eqref{iterr1} was equal to $\prod_{m=0}^{n-1}\frac{\lambda}{\lambda-sa^{m}}\beta^{*}(sa^{m})$. In our case, the terms of the corresponding product depend on $i,j$ and are multiplied with $p_{j,i}$. 

Now, the $(i,j)$-element of the matrix $\prod_{m=0}^{n}H(sa^m)$ is a linear combination of terms, where each is the product of $(n+1)$ terms of the form $F_{w,k}(sa^{m}):=\frac{\lambda_{k}}{\lambda_{k}-sa^{m}}\beta_{w}^{*}(sa^{m})$ (where $k, w$ depend on $i, j$). The coefficients of these products are also the products of $(n+1)$ elements of the one-step transition probability matrix of the background Markov chain (of course related to the indices $k,w$ mentioned above). For example, the $(i,j)$ element of the matrix $\prod_{m=0}^{1}H(sa^m)$ (i.e., $n=1$) is equal to
\begin{displaymath}
    \sum_{l=1}^{N}p_{l,i}p_{j,l}F_{l,i}(s)F_{j,l}(as).
\end{displaymath}
Since the products $F_{l,i}(s)F_{j,l}(as)$ are bounded (as in the scalar case in \cite{box1}), and the products $p_{l,i}p_{j,l}$ are bounded, so it is any element of the matrix $\prod_{m=0}^{1}H(sa^m)$. In general, the $(i,j)$-element of $\prod_{m=0}^{n}H(sa^m)$ is a linear combination of $n\times N$ terms. Each of these terms is a product of $(n+1)$ terms of the form $F_{w,k}(sa^{m})$. The sum of the coefficients of these terms is equal to the probability that the background Markov chain goes from state $j$ to state $i$ in $(n+1)$ steps, i.e., the $(i,j)$-element of $(P^{T})^{n+1}$ (recall that $P$ is the one-step transition probability matrix of the background Markov chain and $P^{T}$ its transpose matrix), which is bounded by 1. Note that in case we have an ergodic background Markov chain this sum is smaller than 1 (and positive), and as $n\to\infty$ it approaches the limiting probability of state $i$. Now, from the previous discussion and having in mind that as $m\to\infty$ the terms $F_{w,k}(sa^{m})\to 1$, we can realize that the limit in \eqref{iterr} exists.
\end{remark}

Note that as $n\to\infty$, $Z_{j}(a^{n+1}s)\to Z_{j}(0)=E(1_{\{Y=j\}})=P(Y=j)=\pi_{j}$, so that $\tilde{Z}(0)=\tilde{\pi}$. Application of Lemma \ref{lemma1} \eqref{iterr}, yields
\begin{theorem}\label{th1}

\begin{equation}
    \tilde{Z}(s)=s\sum_{n=0}^{\infty}a^{n}\prod_{m=0}^{n-1}H(a^{m}s)F(a^{n}s)\tilde{v}+\prod_{m=0}^{\infty}H(a^{m}s)\tilde{\pi},\,\,\tilde{Z}(0)=\tilde{\pi}.\label{siterr}
\end{equation}    
\end{theorem}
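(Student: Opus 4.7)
The plan is to pass to the limit $n\to\infty$ in the finite iteration
\[
\tilde{Z}(s)=s\sum_{k=0}^{n-1}a^{k}\prod_{m=0}^{k-1}H(a^{m}s)F(a^{k}s)\tilde{v}+\prod_{m=0}^{n-1}H(a^{m}s)\tilde{Z}(a^{n}s)
\]
displayed just above Lemma \ref{lemma1} and obtained by $(n-1)$-fold application of the functional equation \eqref{bhj}. Two separate convergence facts must be verified as $n\to\infty$: that the partial products $\prod_{m=0}^{n-1}H(a^{m}s)$ converge in $\mathbb{C}^{N\times N}$, and that the partial sums converge absolutely to the infinite series appearing in \eqref{siterr}.

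For the remainder term, I would first note that $a\in(0,1)$ and $Re(s)\geq 0$ force $a^{n}s\to 0$; continuity of each transform $Z_{j}$ at the origin together with $Z_{j}(0)=\pi_{j}$, already observed in the paragraph following \eqref{iterr}, yields $\tilde{Z}(a^{n}s)\to\tilde{\pi}$. Lemma \ref{lemma1} supplies the uniform bound $\|\prod_{m=0}^{n-1}H(a^{m}s)\|\leq c_{s}\tau^{n-1}$ with $\tau<1$, and since $H(a^{m}s)\to P^{T}$ as $m\to\infty$ the differences between consecutive partial products shrink geometrically in $m$, so they form a Cauchy sequence. Consequently the infinite product $\prod_{m=0}^{\infty}H(a^{m}s)$ is well-defined, and multiplying the two limits produces the second summand of \eqref{siterr}.

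For the series, Lemma \ref{lemma1} bounds the $k$th summand in norm by $a^{k}c_{s}\tau^{k-1}\|F(a^{k}s)\|\|\tilde{v}\|$. The factor $F(a^{k}s)=-\Lambda^{-1}L(a^{k}s)$ stays uniformly bounded in $k$ for any fixed $s$ with $Re(s)\geq 0$, since $a^{k}s\to 0$ and the poles $s=\lambda_{j}$ of $L$ remain bounded away. Hence each summand is dominated by a geometric term in $\tau$, giving absolute and uniform convergence to the first summand of \eqref{siterr}. The principal technical step is establishing convergence of the infinite matrix product; once Lemma \ref{lemma1} is in hand the remainder is essentially bookkeeping, so I do not anticipate any serious obstacle beyond collecting the estimates carefully.
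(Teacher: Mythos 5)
Your proposal follows the same route as the paper: iterate \eqref{bhj}, invoke Lemma~\ref{lemma1} to control the partial matrix products, and use $\tilde{Z}(a^{n}s)\to\tilde{Z}(0)=\tilde{\pi}$ for the remainder. The additional Cauchy argument and the separate domination of the series term are careful fill-ins that the paper leaves implicit, so in substance you have reproduced the paper's proof.

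One tension worth flagging, which you inherit from the paper rather than introduce: Lemma~\ref{lemma1} as stated gives $\|\prod_{m=0}^{n}H(a^{m}s)\|\leq c_{s}\tau^{n}$ with $\tau<1$, and taking $n\to\infty$ in that bound forces $\prod_{m=0}^{\infty}H(a^{m}s)=0$, which would annihilate the second summand of \eqref{siterr} and, at $s=0$, contradict $\tilde{Z}(0)=\tilde{\pi}$ (the series term vanishes at $s=0$). The culprit is that the entrywise max norm used in Lemma~\ref{lemma1} is not submultiplicative, so $\|(P^{T})^{n}\|\leq\|P^{T}\|^{n}$ does not hold; in fact $(P^{T})^{n}\to\tilde{\pi}\mathbf{1}^{T}\neq 0$ for an irreducible stochastic $P$. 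Your Cauchy argument is therefore either redundant (if the geometric bound held, convergence to zero is immediate) or, more precisely, should be run with a correctly normalized submultiplicative norm and a bound that keeps the infinite product nonzero. This does not change the shape of your argument, but it is a gap that would need repairing in a fully rigorous write-up.
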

We still have to derive the terms $v_{j}$, $j=1,\ldots,N$, which contains $Z_{i}(a\lambda_{j})$, $i=1,\ldots,N$. 
\begin{proposition}\label{prop1}
    The vector $\tilde{v}$ is the unique solution of the following system of equations:
    \begin{equation}
        v_{j}=e_{j}P^{T}B^{*}(\lambda_{j})[\lambda_{j}\sum_{n=0}^{\infty}a^{n+1}\prod_{m=0}^{n-1}H(a^{m+1}\lambda_{j})F(a^{n+1}\lambda_{j})\tilde{v}+\prod_{m=0}^{\infty}H(a^{m+1}\lambda_{j})\tilde{\pi}],\,j=1,\ldots,N,\label{lpo}
    \end{equation}
    where $e_{j}$ a $1\times N$ vector with the $j$th element equal to one and all the others equal to zero.
\end{proposition}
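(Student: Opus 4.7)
The plan is to derive \eqref{lpo} by evaluating the series representation \eqref{siterr} at the special argument $s=a\lambda_j$, and then to verify uniqueness of the resulting linear system in $\tilde{v}$.

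First, I would rewrite the scalar definition of $v_j$ in matrix-vector form. Since $v_j=\sum_{i=1}^N p_{i,j}\beta_i^*(\lambda_j) Z_i(a\lambda_j)$ is precisely the $j$-th component of the vector $P^T B^*(\lambda_j)\tilde{Z}(a\lambda_j)$ (with $B^*(\lambda_j)$ evaluated at the single scalar $\lambda_j$), one has $v_j = e_j P^T B^*(\lambda_j) \tilde{Z}(a\lambda_j)$. Next, I would apply Theorem \ref{th1} with $s$ replaced by $a\lambda_j$. Because $a^m(a\lambda_j)=a^{m+1}\lambda_j$, the substitution immediately yields
$$\tilde{Z}(a\lambda_j)=a\lambda_j\sum_{n=0}^\infty a^n\prod_{m=0}^{n-1}H(a^{m+1}\lambda_j)F(a^{n+1}\lambda_j)\tilde{v}+\prod_{m=0}^\infty H(a^{m+1}\lambda_j)\tilde{\pi}.$$
Absorbing the factor $a$ from $a\lambda_j$ into the power $a^n$ so that $a\cdot a^n=a^{n+1}$, and pre-multiplying by $e_j P^T B^*(\lambda_j)$, produces exactly the right-hand side of \eqref{lpo}.

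For uniqueness, I would view \eqref{lpo} as a linear system in $\tilde{v}$ of the form $\tilde{v}=M\tilde{v}+\tilde{c}$, where the $j$-th row of $M$ equals $\lambda_j e_j P^T B^*(\lambda_j)\sum_{n=0}^\infty a^{n+1}\prod_{m=0}^{n-1}H(a^{m+1}\lambda_j)F(a^{n+1}\lambda_j)$ and $\tilde{c}$ is the constant vector assembled from $e_j P^T B^*(\lambda_j)\prod_{m=0}^\infty H(a^{m+1}\lambda_j)\tilde{\pi}$. By Lemma \ref{lemma1} each product $\prod_{m=0}^{n-1} H(a^{m+1}\lambda_j)$ is bounded by $c_{\lambda_j}\tau^n$ with $\tau<1$, while $\|F(a^{n+1}\lambda_j)\|$ and $\|B^*(\lambda_j)\|$ are uniformly bounded in $n$, so each row of $M$ is an absolutely convergent series; the extra factor $a^{n+1}$ reinforces the geometric decay, which should allow one to conclude that $I-M$ is invertible and the fixed point is unique.

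The main obstacle I anticipate is rigorously establishing that $I-M$ is invertible, i.e., that the spectral radius of $M$ is strictly less than one, since $M$ has entries given by doubly-indexed infinite matrix products whose operator norms are not obviously small. A cleaner route, which I would prefer to invoke if the contractive bound is unwieldy, is to note that $\tilde{Z}$ is uniquely determined by the recursion \eqref{bhj} together with the boundary condition $\tilde{Z}(0)=\tilde{\pi}$; consequently each scalar $Z_i(a\lambda_j)$, and hence the vector $\tilde{v}$, is uniquely determined a priori, so uniqueness of the solution to \eqref{lpo} follows for free from uniqueness of $\tilde{Z}$.
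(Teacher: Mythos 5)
Your derivation of \eqref{lpo} — substituting $s=a\lambda_j$ into \eqref{siterr}, absorbing the factor $a$ into $a^n$, and premultiplying by $e_jP^TB^*(\lambda_j)$ using the identity $v_j=e_jP^TB^*(\lambda_j)\tilde{Z}(a\lambda_j)$ — is exactly the paper's argument, and your fallback uniqueness argument (appealing to uniqueness of the stationary $\tilde{Z}(s)$ rather than to a contractivity estimate for $I-M$) is also the route the paper takes. The contractivity detour you sketch is not developed in the paper and, as you note, is harder to make rigorous, so the "cleaner route" you prefer is the one to keep.
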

\begin{proof}
    Setting $s=a\lambda_{j}$ in \eqref{siterr} to obtain, for $j=1,\ldots,N,$
    \begin{displaymath}
        \tilde{Z}(a\lambda_{j})=\lambda_{j}\sum_{n=0}^{\infty}a^{n+1}\prod_{m=0}^{n-1}H(a^{m+1}\lambda_{j})F(a^{n+1}\lambda_{j})\tilde{v}+\prod_{m=0}^{\infty}H(a^{m+1}\lambda_{j})\tilde{\pi}.
    \end{displaymath}
    Simple computations imply that multiplying the above expression with $e_{j}P^{T}B^{*}(\lambda_{j})$ from the left, and having in mind that $v_{j}:=\sum_{i=1}^{N}p_{i,j}\beta_{i}^{*}(\lambda_{j})Z_{i}(a\lambda_{j})$, $j=1,\ldots,N$, yields \eqref{lpo}. The set of equations \eqref{lpo} has a unique positive solution. The uniqueness of the solution follows from the uniqueness of the stationary distribution $\tilde{Z}(s)$, $Re(s)\geq 0$. Solving this linear system of equations we can derive $v_{j}$, $j=1,\ldots,N$.
\end{proof}
\paragraph{Steady-state moments} Once $\tilde{v}$ is known, the transform vector $\tilde{Z}(s)$ is fully specified. Let $M_{i}:=-Z_{i}^{\prime}(0)=E(W_n 1_{\{Y_n=i\}})$, $i=1,\ldots,N$ and $\tilde{M}:=(M_{1},\ldots,M_{N})^{T}$. Differentiating \eqref{bhj} with respect to $s$ and letting $s\to 0$ we obtain the mean workload vector
\begin{displaymath}
    \tilde{M}=(P^{T}a-I)^{-1}(\Phi\tilde{\pi}-\Lambda^{-1}\tilde{v}),
\end{displaymath}
where $\Phi:=L^{\prime}(0)P^{T}+P^{T}B^{*\prime}(0)$, and $I$ be the $N\times N$ identity matrix.
\begin{remark}
    It would be of high importance to consider also recursions that result in the following matrix functional equations:
\begin{equation}
        \tilde{Z}(s)=H(s)\tilde{Z}(\zeta(s))+\tilde{V}(s).\label{aqw}
    \end{equation}
    In our case $\zeta(s)=as$. In general, it seems that cases where $\zeta(s)$ is a contraction on $\{s\in\mathbb{C}:Re(s)\geq 0\}$ can be handled in a similar fashion. 
    
    Similarly, for the case where
    \begin{equation*}
        \tilde{Z}(s)=H(\theta(s))\tilde{Z}(\zeta(s))+\tilde{V}(s),
    \end{equation*}
where $\zeta(s)$, $\theta(s)$ should be commutative contraction mappings on $\{s\in\mathbb{C}:Re(s)\geq 0\}$; see Section \ref{shot}.

    In this direction consider the case where there is an additional dependence structure among the interarrival time and the service time of the previous customer. In particular consider the case where $A_{n+1}=\Omega_{n}S_{n}+J_{n}$, thus, $W_{n+1}=[aW_{n}+(1-\Omega_{n})S_{n}-J_{n}]^{+}$, with $J_{n}\sim \exp(\lambda_{j})$, given that $Y_{n+1}=j$, $j=1,\ldots,N$, and $\Omega_{n}$ such that $P(\Omega_{n}=u_{k})=b_{k}$, $u_{k}\in(0,1)$, $k=1,\ldots,K$, $\sum_{k=1}^{K}b_{k}=1$. Then, following the lines in Theorem \ref{th}, we obtain after some algebra:
    \begin{equation}
    Z_{j}(s)=\frac{\lambda_{j}}{\lambda_{j}-s}\sum_{i=1}^{N}p_{i,j}Z_{i}(as)\sum_{k=1}^{K}b_{k}\beta_{i}^{*}(\bar{u}_{k}s)-\frac{s}{\lambda_{j}-s}v_{j},\label{op1}
\end{equation}
where now $v_{j}:=\sum_{i=1}^{N}p_{i,j}Z_{i}(a\lambda_{j})\sum_{k=1}^{K}b_{k}\beta_{i}^{*}(\bar{u}_{k}\lambda_{j})$, $j=1,\ldots,N$, with $\bar{u}_{k}:=1-u_{k}$, $k=1,\ldots,K$. In matrix notation, \eqref{op1} is rewritten as
\begin{displaymath}
    \tilde{Z}(s)=H(\bar{u}_{1}s,\ldots,\bar{u}_{K}s)\tilde{Z}(as)+\tilde{V}(s),
\end{displaymath}
where now $H(\bar{u}_{1}s,\ldots,\bar{u}_{K}s)=L(s)P^{T}B^{*}(\bar{u}_{1}s,\ldots,\bar{u}_{K}s)$, with 
\begin{displaymath}
    B^{*}(\bar{u}_{1}s,\ldots,\bar{u}_{K}s):=diag(\sum_{k=1}^{K}b_{k}\beta_{1}^{*}(\bar{u}_{k}s),\ldots,\sum_{k=1}^{K}b_{k}\beta_{N}^{*}(\bar{u}_{k}s)).
\end{displaymath}
Using similar arguments as in the proof of Theorem \ref{th} we obtain
\begin{equation}
    \tilde{Z}(s)=s\sum_{n=0}^{\infty}a^{n}\prod_{m=0}^{n-1}H(a^{m}\bar{u}_{1}s,\ldots,a^{m}\bar{u}_{K}s)F(a^{n}s)\tilde{v}+\prod_{m=0}^{\infty}H(a^{m}\bar{u}_{1}s,\ldots,a^{m}\bar{u}_{K}s)\tilde{\pi},\,\,\tilde{Z}(0)=\tilde{\pi}.\label{siterrt}
\end{equation}
Setting $s=a\lambda_{j}$, $j=1,\ldots,N$ in \eqref{siterrt} and following the lines in Proposition \ref{prop1}, we obtain $\tilde{v}$. Note that in deriving \eqref{siterrt} we have to appropriately modify Lemma \ref{lemma1} to show the convergence of the involved infinite products and series. This is not a difficult task since $a^{m}\bar{u}_{k}\to 0$ as $m\to\infty$ for all $k=1,\ldots,K$.
\end{remark}
\subsection{The case where interarrival times are mixed-Erlang distributed}\label{xc2}
Assume now that if the Markov chain is in state $j$, the interarrival time is with probability $q_{m}$ equal to a random variable $D_{m}$, $m=1,\ldots,M$, that follows an Erlang distribution with parameter $\lambda_{j}$ and $m$ phases. More precisely, the distribution function of $A$ is given by
\begin{equation}
    G_{A,j}(x):=\sum_{m=1}^{M}q_{m} \left(1-e^{-\lambda_{j}x}\sum_{l=0}^{m-1}\frac{(\lambda_{j}x)^{l}}{l!} \right),\,x\geq 0.\label{pol}
\end{equation}
    It is well known that such a class of phase-type distributions can be used to approximate any given distribution on $[0,\infty)$ for the interarrival
times arbitrarily close; see e.g., \cite{schass}. Then, the Laplace-Stieltjes transform of the limiting distribution of the workload is given by the following theorem. We denote the
derivative of order $i$ of a function $f$ by $f^{(i)}$ with the convention that $f^{(0)}=f$.
\begin{theorem}
    Let $ G_{A,j}(.)$ be given in \eqref{pol}. Then, for $j=1,\ldots,N,$
    \begin{equation}
        \begin{array}{rl}
            Z_{j}(s)= &\sum_{i=1}^{N}p_{i,j}\sum_{m=1}^{M}q_{m}(\frac{\lambda_{j}}{\lambda_{j}-s})^{m}Z_{i}(as)\beta_{i}^{*}(s)\vspace{2mm} \\
             &+ \sum_{i=1}^{N}p_{i,j}\sum_{m=1}^{M}q_{m}\sum_{l=0}^{m-1}\sum_{k=0}^{l}\frac{(-\lambda_{j})^{l}}{l!}\binom{l}{k}\left[Z_{i}(a\lambda_{j})\right]^{(k)}\left[\beta^{*}_{i}(\lambda_{j})\right]^{(l-k)}\left(1-(\frac{\lambda_{j}}{\lambda_{j}-s})^{m-l}\right).
        \end{array}\label{bnb1}
    \end{equation}
    Equivalently, in matrix notation, the transform vector $\tilde{Z}(s)$ satisfies \begin{equation}
    \tilde{Z}(s)=\widehat{H}(s)\tilde{Z}(as)+\widehat{V}(s),
    \label{bhjn}
\end{equation}
where now $\widehat{H}(s)=\widehat{L}(s)P^{T}B^{*}(s)$, where $\widehat{L}(s):=(\sum_{m=1}^{M}q_{m}\left(\frac{\lambda_{1}}{\lambda_{1}-s}\right)^{m},\ldots,\sum_{m=1}^{M}q_{m}\left(\frac{\lambda_{N}}{\lambda_{N}-s}\right)^{m})$ and $\widehat{V}(s):=(v_{1}(s),\ldots,v_{N}(s))^{T}$ be a $N\times 1$ column vector where:
\begin{displaymath}
    v_{j}(s):=\sum_{m=1}^{M}q_{m}\widehat{e}_{m}A_{j}^{(m)}(s)\bar{v}_{j}^{(m)},\,j=1,\ldots,N.
\end{displaymath}
For $m=1,\ldots,M$, $\widehat{e}_{m}$ be a $1\times m$ row vector of ones, $A_{j}^{(m)}(s)$ be an $m\times m$ diagonal matrix whose $(t,t)$ entry equals $1-(\frac{\lambda_{j}}{\lambda_{j}-s})^{m-t+1}$, $t=1,\ldots,m$, and $\bar{v}_{j}^{(m)}:=(v_{j}^{(m)}(0),\ldots,v_{j}^{(m)}(m-1))^{T}$, where for $l=0,1,\ldots,m-1,$
\begin{displaymath}
    v_{j}^{(m)}(l):=\sum_{k=0}^{l}\frac{(-\lambda_{j})^{l}}{l!}\binom{l}{k}\sum_{i=1}^{N}p_{i,j}[Z_{i}(a\lambda_{j})]^{(k)}[\beta_{i}^{*}(\lambda_{j})]^{(l-k)},\,j=1,\ldots,N.
\end{displaymath}
\end{theorem}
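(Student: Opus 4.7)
The plan is to mimic the proof of Theorem~\ref{th}, the only new ingredient being the closed-form evaluation of the one-step transform when $A_{n+1}$ is a finite mixture of Erlangs. Starting from $W_{n+1}=[aW_n+S_n-A_{n+1}]^+$ and conditioning on $\{Y_n=i,Y_{n+1}=j\}$, I reduce matters to computing $E(e^{-s[\xi-A]^+})$ with $\xi:=aW_n+S_n$ and $A$ an Erlang$(m,\lambda_j)$ variable; the result will then be mixed over $m$ with weights $q_m$ and summed over $i$ with weights $p_{i,j}$.

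\textbf{Core identity.} For a single Erlang$(m,\lambda_j)$ component, splitting the expectation according to $\{A\le\xi\}$ and $\{A>\xi\}$ and using the incomplete-gamma identity
\begin{displaymath}
\int_0^\xi y^{m-1}e^{-(\lambda_j-s)y}\,dy=\frac{(m-1)!}{(\lambda_j-s)^m}\Bigl[1-e^{-(\lambda_j-s)\xi}\sum_{l=0}^{m-1}\frac{((\lambda_j-s)\xi)^l}{l!}\Bigr],
\end{displaymath}
together with the Erlang survival function $\sum_{l=0}^{m-1}e^{-\lambda_j\xi}(\lambda_j\xi)^l/l!$, I expect to obtain after cancellation
\begin{displaymath}
E(e^{-s[\xi-A]^+})=\Bigl(\frac{\lambda_j}{\lambda_j-s}\Bigr)^m e^{-s\xi}+e^{-\lambda_j\xi}\sum_{l=0}^{m-1}\frac{(\lambda_j\xi)^l}{l!}\Bigl[1-\Bigl(\frac{\lambda_j}{\lambda_j-s}\Bigr)^{m-l}\Bigr],
\end{displaymath}
which at $m=1$ reduces to the identity used inside Theorem~\ref{th}, providing a sanity check.

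\textbf{From polynomial factors to derivatives.} Taking conditional expectation in $\xi$ on $\{Y_n=i\}$, the first piece produces $(\lambda_j/(\lambda_j-s))^m Z_i^n(as)\beta_i^*(s)$. For the polynomial-weighted pieces I use the elementary identity
\begin{displaymath}
E\bigl(\xi^l e^{-\lambda_j\xi}1_{\{Y_n=i\}}\bigr)=(-1)^l\frac{d^l}{d\lambda_j^l}\bigl[Z_i^n(a\lambda_j)\beta_i^*(\lambda_j)\bigr],
\end{displaymath}
and expand the right-hand side via Leibniz's rule; the combination $\lambda_j^l(-1)^l/l!=(-\lambda_j)^l/l!$ supplies precisely the coefficient appearing in \eqref{bnb1}. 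Multiplying by $p_{i,j}q_m$ and summing over $i$ and $m$, then passing $n\to\infty$ (so that $Z_i^n$ and all its derivatives at $a\lambda_j$ converge to those of $Z_i$, justified along the lines following Theorem~\ref{th}), delivers the scalar identity \eqref{bnb1}.

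\textbf{Matrix assembly and main obstacle.} The matrix form \eqref{bhjn} is obtained by stacking the $N$ scalar identities: the first term aggregates into $\widehat L(s)P^TB^*(s)\tilde Z(as)=\widehat H(s)\tilde Z(as)$, while the $l$-sum, after collecting the factors $1-(\lambda_j/(\lambda_j-s))^{m-l}$ into the diagonal matrix $A_j^{(m)}(s)$ and the Leibniz combinations into the column vector $\bar v_j^{(m)}$, forms the $j$th entry $v_j(s)$ of $\widehat V(s)$. The main difficulty is purely combinatorial: one has to reorganise the triple sum over $(m,l,k)$ into the compact form $\sum_m q_m e_m A_j^{(m)}(s)\bar v_j^{(m)}$, and remain consistent with the convention that $[Z_i(a\lambda_j)]^{(k)}$ denotes the $k$th derivative of $\lambda_j\mapsto Z_i(a\lambda_j)$ (which silently absorbs the $a^k$ factor that the chain rule would otherwise make explicit). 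No genuinely new analytic issue arises beyond the convergence arguments already handled in the exponential case.
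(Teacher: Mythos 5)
Your proof is correct and follows essentially the same route as the paper: split the expectation over $\{A\le\xi\}$ and $\{A>\xi\}$, apply the incomplete-gamma identity for the Erlang component, recombine into the factor $1-(\lambda_j/(\lambda_j-s))^{m-l}$, and convert the $\xi^l e^{-\lambda_j\xi}$ moments to derivatives via Leibniz's rule before passing $n\to\infty$ and stacking. The only difference is expository — you make the derivative identity $E(\xi^l e^{-\lambda_j\xi}1_{\{Y_n=i\}})=(-1)^l\frac{d^l}{d\lambda_j^l}[Z_i^n(a\lambda_j)\beta_i^*(\lambda_j)]$ and the chain-rule convention behind $[Z_i(a\lambda_j)]^{(k)}$ explicit, whereas the paper compresses these steps.
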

\begin{proof}
    By following similar steps as in Theorem \ref{th} but for the present interarrival time we have:
    \begin{displaymath}
    \begin{array}{rl}
        Z_{j}^{n+1}(s)= &E\left(e^{-sW_{n+1}}1_{\{Y_{n+1}=j\}}\right)\vspace{2mm}\\
         =&\sum_{i=1}^{N}P(Y_{n}=i)p_{i,j} \left[E\left(\int_{0}^{aW_{n}+S_{n}}e^{-s(aW_{n}+S_{n}-y)} \sum_{m=1}^{M}q_{m}\frac{\lambda_{j}^{m}y^{m-1}e^{-\lambda_{j}y}}{(m-1)!}dy|Y_{n}=i\right)\right.\vspace{2mm}\\&\left.+E\left(\int_{aW_{n}+S_{n}}^{\infty}\sum_{m=1}^{M}q_{m}\frac{\lambda_{j}^{m}y^{m-1}e^{-\lambda_{j}y}}{(m-1)!}dy|Y_{n}=i\right)
         \right]\vspace{2mm}\\
         =&\sum_{i=1}^{N}P(Y_{n}=i)p_{i,j}\sum_{m=1}^{M}q_{m}\frac{\lambda_{j}^{m}}{(m-1)!}\left[E\left(e^{-s(aW_{n}+S_{n})} \int_{0}^{aW_{n}+S_{n}}y^{m-1}e^{-(\lambda_{j}-s)y}dy|Y_{n}=i\right)\right.\vspace{2mm}\\&\left.+E\left(\int_{aW_{n}+S_{n}}^{\infty}y^{m-1}e^{-\lambda_{j}y}dy|Y_{n}=i\right)\right]\vspace{2mm}\\
         &=\sum_{i=1}^{N}P(Y_{n}=i)p_{i,j}\sum_{m=1}^{M}q_{m}E\left(\left(\frac{\lambda_{j}}{\lambda_{j}-s}\right)^{m}e^{-s(aW_{n}+S_{n})}\right.\vspace{2mm}\\&\left.-\left(\frac{\lambda_{j}}{\lambda_{j}-s}\right)^{m}e^{-\lambda_{j}(aW_{n}+S_{n})}\sum_{l=0}^{m-1}\frac{(\lambda_{j}-s)^{l}(aW_{n}+S_{n})^{l}}{l!}+e^{-\lambda_{j}(aW_{n}+S_{n})}\sum_{l=0}^{m-1}\frac{(\lambda_{j}(aW_{n}+S_{n}))^{l}}{l!}|Y_{n}=i\right),
    \end{array}
\end{displaymath}
which results in
\begin{equation*}
        \begin{array}{rl}
            Z_{j}^{n+1}(s)= &\sum_{i=1}^{N}p_{i,j}\sum_{m=1}^{M}q_{m}(\frac{\lambda_{j}}{\lambda_{j}-s})^{m}Z_{i}^{n}(as)\beta_{i}^{*}(s)\vspace{2mm} \\
             &+ \sum_{i=1}^{N}p_{i,j}\sum_{m=1}^{M}q_{m}\sum_{l=0}^{m-1}\sum_{k=0}^{l}\frac{(-\lambda_{j})^{l}}{l!}\binom{l}{k}[Z_{i}^{n}(a\lambda_{j})]^{(k)}[\beta^{*}_{i}(\lambda_{j})]^{(l-k)}\left(1-(\frac{\lambda_{j}}{\lambda_{j}-s})^{m-l}\right).
        \end{array}\label{bn1}
    \end{equation*}
    Letting $n\to\infty$ we have that $Z_{j}(s)$ is given by \eqref{bnb1}. After simple computations we obtain the matrix representation in \eqref{bhjn}.
\end{proof}

Iterating \eqref{bhjn}, and modifying appropriately Lemma \ref{lemma1} we obtain:
\begin{equation}
    \tilde{Z}(s)=\sum_{n=0}^{\infty}\prod_{d=0}^{n-1}\widehat{H}(a^{d}s)\widehat{V}(a^{n}s)+\prod_{n=0}^{\infty}\widehat{H}(a^{n}s)\tilde{\pi},\,\tilde{Z}(0)=\tilde{\pi}.\label{soli}
\end{equation}

For $i,j=1,\ldots,N$, the $N^{2}M$ unknown constants $[Z_{i}(a\lambda_{j})]^{(m)}:=Z_{i}^{(m)}(a\lambda_{j})$, $m=0,1,\ldots,M-1$ are the unique solution to the system of linear equations resulting by  differentiating $m$ times, $m=0,1,\ldots,M-1$, and substituting $s=a\lambda_{k}$, $k=1,\ldots,N$ in \eqref{soli}. More precisely, for \textit{each} $m=0,1,\ldots,M-1$, we substitute $s=a\lambda_{k}$, $k=1,\ldots,N$ in \eqref{soli} and obtain $N^{2}$ equations that relate  
\begin{displaymath}
    \begin{array}{rl}
        \tilde{Z}^{(m)}(a\lambda_{1})=&(Z_{1}^{(m)}(a\lambda_{1}),Z_{2}^{(m)}(a\lambda_{1}),\ldots,Z_{N}^{(m)}(a\lambda_{1}))^{T},  \\
         \tilde{Z}^{(m)}(a\lambda_{2})=&(Z_{1}^{(m)}(a\lambda_{2}),Z_{2}^{(m)}(a\lambda_{2}),\ldots,Z_{N}^{(m)}(a\lambda_{2}))^{T},\\
         \vdots&\vdots\\
         \tilde{Z}^{(m)}(a\lambda_{N})=&(Z_{1}^{(m)}(a\lambda_{N}),Z_{2}^{(m)}(a\lambda_{N}),\ldots,Z_{N}^{(m)}(a\lambda_{N}))^{T}.
    \end{array}
\end{displaymath}
At the end of this procedure we will have $N^{2}M$ linear equations for the $N^{2}M$ unknown terms $Z_{i}^{(m)}(a\lambda_{j})$, $m=0,1,\ldots,M-1$, $i,j=1,\ldots,N$.
\section{Dependence based on the FGM copula}\label{fgm}
Contrary to the case in Section \ref{sec1}, in the following we assume that $A_{n+1}$, $S_{n}$ are not conditionally independent given $Y_{n}$, $Y_{n+1}$, but they are dependent based on the FGM copula. In general, copulas are functions which join or couple multivariate distribution functions to their one-dimensional marginal distribution functions \cite{nelsen}. Thus, a copula itself is a multivariate distribution function whose inputs are the
respective marginal cumulative probability distribution functions for the random variables
of interest.

A bivariate copula $C$ is a joint distribution function on $[0,1]\times[0,1]$ with standard uniform marginal  distributions. Under Sklar's theorem \cite{nelsen}, any bivariate distribution function $F$ with marginals $F_1$ and $F_2$ can   be   written   as $F(x,y)=C(F_{1}(x),F_{2}(y))$, for some copula $C$. For more details on copulas and their properties see \cite{joe,nelsen}. Modeling the dependence structure between random variables (r.v.) using copulas has become popular in actuarial science and financial risk management; e.g. \cite{denoi,albteu,cossette1} (non-exhaustive list). In queueing literature there has been some recent works where copulas are used to introduce dependency. In \cite{trapa}, the authors applied a new constructed bound copula to analyze the dependency between two
parallel service times. In \cite{lei}, the authors developed a new method based on copulas to model correlated inputs in discrete-event stochastic systems. Motivated by recent empirical evidence, the authors in \cite{wu1,wu2} used a fluid model with bivariate dependence orders and copulas to study the dependence among service times and patience times in large scale service systems; see also \cite{gu,wang,rai}. In our work we focus on a dependence structure based on the FGM copula, which is defined by
\begin{displaymath}
    C_{\theta}^{FGM}(u_{1},u_{2})=u_{1}u_{2}+\theta u_{1}u_{2}(1-u_{1})(1-u_{2}),\,(u_{1},u_{2})\in[0,1]\times[0,1],\,\theta\in[-1,1].
\end{displaymath}
The density function associated to the above expression is
\begin{displaymath}
    c_{\theta}^{FGM}(u_{1},u_{2}):=\frac{\partial^{2}}{\partial u_{1}\partial u_{2}}C_{\theta}^{FGM}(u_{1},u_{2})=1+\theta(1-2u_{1})(1-2u_{2}).
\end{displaymath}

The FGM copula allows for negative and positive dependence, and includes the independence copula ($\theta=0$), which corresponds to the case considered in Section \ref{sec1}. Our primary motivation for choosing FGM copula is due to its simplicity and its tractability due to its polynomial structure. It has a simple analytical form, which is easy to apply, thus, make it attractive to when we are dealing with the modeling of bivariate dependent data. Its simple analytic shape enables closed-form solutions to many problems in applied probability. Moreover, it is a first order approximation of the Plackett copula and of the Frank copula \cite[p. 100, and p. 106, respectively]{nelsen}. For applications of FGM copula in risk theory, health insurance plans, financial risk management, stochastic frontiers and a stereological context, see \cite{cossette1}, \cite{mai}, \cite{barges}, \cite{kim} and the references therein. Often, it is natural to describe the FGM copula as a perturbation of the product copula (e.g. \cite{durante}), inducing moderate dependence between marginals. The class of FGM copulas is a popular choice when working in two dimensions due to its simple shape and the exact calculus of polynomial functions.

In general, the bivariate distribution, say $F_{X,Y}$, of the bivariate random vector $(X,Y)$ with continuous marginals, say $F_{X}$, $F_{Y}$, and with a dependence structure based on the FGM copula is defined by
\begin{displaymath}
\begin{array}{rl}
   F_{X,Y}(x,y)=&C_{\theta}^{FGM}(F_{X}(x),F_{Y}(y))\vspace{2mm}\\
     =& F_{X}(x)F_{Y}(y)+\theta F_{X}(x)F_{Y}(y)\left(1-F_{X}(x)\right)\left(1-F_{Y}(y)\right),\,x,y\in\mathbb{R}^{+},\,\theta\in[-1,1].
\end{array}
\end{displaymath}
 With this, the bivariate density of $(X,Y)$ is given by,
\begin{displaymath}
\begin{array}{rl}
   f_{X,Y}(x,y)=&c_{\theta}^{FGM}(F_{X}(x),F_{Y}(y))f_{X}(x)f_{Y}(y)\vspace{2mm}\\
     =& f_{X}(x)f_{Y}(y)+\theta g(x)f_{Y}(y)\left(2\bar{F}_{Y}(y)-1\right),\,x,y\in\mathbb{R}^{+},\,\theta\in[-1,1],
\end{array}
\end{displaymath}
where $g(x)=f_{X}(x)(1-2F_{X}(x))$ with Laplace transform $g^{*}(s)=\int_{0}^{\infty}e^{-sx}g(x)dx$.

In the following, we focus on the Markov-dependent reflected autoregressive process, and assume that $\{(S_{n},A_{n+1}),n\geq 1\}$ form a sequence of i.i.d. random vectors distributed as the canonical r.v. $(S,A)$, in which the components are dependent based on the FGM copula. To our best knowledge, there is no other work in the related literature that focus on (either scalar or Markov-dependent) reflected autoregressive processes with dependencies based on copulas. Thus, we are dealing with a recursion of the form $W_{n+1}=[aW_{n}+S_{n}-A_{n+1}]^{+}$, where now for $n\geq 0$, $x,y\geq 0$, $i,j=1,\ldots,N$:
\begin{equation}
    \begin{array}{l}
         P(A_{n+1}\leq x,S_{n}\leq y,Y_{n+1}=j|Y_{n}=i,A_{2},\ldots,A_{n},S_{1},\ldots,S_{n-1},Y_{1},\ldots,Y_{n-1}) \vspace{2mm}\\
        =  P(A_{n+1}\leq x,S_{n}\leq y,Y_{n+1}=j|Y_{n}=i)=p_{i,j}F_{S,A|i,j}(y,x),
    \end{array}\label{mp1m}
\end{equation}
where, $F_{S,A|i,j}(y,x)$ is the bivariate distribution function of $(S_{n},A_{n+1})$ given $Y_{n}$, $Y_{n+1}$ with marginals $F_{S,i}(y)$, $F_{A,j}(x)$ defined as $F_{S,A|i,j}(y,x)=C_{\theta}^{FGM}(F_{S,i}(y),F_{A,j}(x))$ for $(y,x)\in \mathbb{R}^{+}\times \mathbb{R}^{+}$. The bivariate density of $(S,A)$ is given by 
\begin{displaymath}
\begin{array}{rl}
     f_{S,A|i,j}(y,x)=&c_{\theta}^{FGM}(F_{S,i}(y),F_{A,j}(x))f_{S,i}(y)f_{A,j}(x)= f_{S,i}(y)f_{A,j}(x)+\theta g_{i}(y)(2\bar{F}_{A,j}(x)-1)f_{A,j}(x),
\end{array}
\end{displaymath}
with $g_{i}(y):=f_{S,i}(y)(1-2F_{S,i}(y))$ with Laplace transform $g_{i}^{*}(s)=\int_{0}^{\infty}e^{-sy}g_{i}(y)dy$, and $\bar{F}_{A,j}(x):=1-F_{A,j}(x)$. In our case,
\begin{equation}
    \begin{array}{c}
         f_{S,A|i,j}(y,x)=f_{S,i}(y)\lambda_{j}e^{-\lambda_{j}x}+\theta g_{i}(y)\left[2\lambda_{j}e^{-2\lambda_{j}x}-\lambda_{j}e^{-\lambda_{j}x}\right].
    \end{array}\label{biv1}   
\end{equation}

Let now $Z_{i}^{n}(s;\theta):=E(e^{-sW_{n}}1_{\{Y_{n}=i\}};\theta)$, $Re(s)\geq 0$, $i=1,\ldots,N$, $n\geq 0$, and assuming the limit exists, define $Z_{i}(s;\theta)=\lim_{n\to\infty}Z_{i}^{n}(s;\theta)$, $i=1,\ldots,N$. By using similar argument as above, we obtain the following result.
\begin{theorem}\label{th2}
    The transforms $Z_{j}(s;\theta)$, $j=1,\ldots,N$, satisfy the following equation:
    \begin{equation}
        \begin{array}{rl}
            Z_{j}(s;\theta)= &\frac{\lambda_{j}}{\lambda_{j}-s}\sum_{i=1}^{N}p_{i,j}\left(\beta_{i}^{*}(s)-\frac{\theta s}{2\lambda_{j}-s}g_{i}(s)\right)Z_{i}(as;\theta) \vspace{2mm} \\
             &-s\sum_{i=1}^{N}p_{i,j}\left[\frac{\theta}{2\lambda_{j}-s}g_{i}(2\lambda_{j})Z_{i}(2a\lambda_{j};\theta)+\frac{\beta_{i}^{*}(\lambda_{j})-\theta g_{i}(\lambda_{j})}{\lambda_{j}-s}Z_{i}(a\lambda_{j};\theta)\right]. 
        \end{array}\label{fgma}
    \end{equation}
    In matrix notation, \eqref{fgma} is rewritten as 
    \begin{equation}
        \Tilde{Z}(s;\theta)=U(s;\theta)\Tilde{Z}(as;\theta)+\Tilde{K}(s;\theta),\label{gh}
    \end{equation}
    where now, 
    \begin{displaymath}
        \begin{array}{rl}
            U(s;\theta):= &L_{1}(s)[P^{T}B^{*}(s)+\theta(I-L_{2}(s))P^{T}G^{*}(s)],\vspace{2mm}\\
            \Tilde{K}(s;\theta)=& (I-L_{1}(s))\tilde{v}^{(1)}+\theta (I-L_{2}(s))\tilde{v}^{(2)},
        \end{array}
    \end{displaymath}
    with  $G^{*}(s):=diag(g_{1}^{*}(s),\ldots,g_{N}^{*}(s))$, $L_{k}(s)=diag(\frac{k\lambda_{1}}{k\lambda_{1}-s},\ldots,\frac{k\lambda_{N}}{k\lambda_{N}-s})$, $\tilde{v}^{(k)}:=(v_{1}^{(k)},\ldots,v_{N}^{(k)})$, $k=1,2,$ where for $j=1,\ldots,N$,
    \begin{displaymath}
   v_{j}^{(1)}:=\sum_{i=1}^{N}p_{i,j} \left(\beta_{i}^{*}(\lambda_{j})-\theta g_{i}^{*}(\lambda_{j}) \right)  Z_{i}(a\lambda_{j};\theta),\,\,\,v_{j}^{(2)}:=\theta\sum_{i=1}^{N}p_{i,j} g_{i}^{*}(2\lambda_{j})Z_{i}(2a\lambda_{j};\theta).
    \end{displaymath} 
\end{theorem}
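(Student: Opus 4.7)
The plan is to imitate the derivation of Theorem \ref{th} but integrate against the bivariate density (\ref{biv1}) instead of a product of marginals. Fix $n\geq 0$ and $j\in\{1,\ldots,N\}$; conditioning $Z_j^{n+1}(s;\theta)=E(e^{-sW_{n+1}}1_{\{Y_{n+1}=j\}};\theta)$ on $Y_n=i$ and $Y_{n+1}=j$ and invoking the Markov property together with (\ref{mp1m}), one obtains
$$Z_j^{n+1}(s;\theta)=\sum_{i=1}^{N}P(Y_n=i)\,p_{i,j}\,E\!\left(\int_0^\infty\!\!\int_0^\infty e^{-s[aW_n+y-x]^+}f_{S,A|i,j}(y,x)\,dx\,dy\,\Big|\,Y_n=i\right).$$
The FGM density in (\ref{biv1}) splits as a sum of three exponential components in $x$, namely $f_{S,i}(y)\lambda_j e^{-\lambda_j x}$, $2\theta g_i(y)\lambda_j e^{-2\lambda_j x}$ and $-\theta g_i(y)\lambda_j e^{-\lambda_j x}$, and I will treat each component separately.

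Each component has the form $h(y)\mu e^{-\mu x}$ with $\mu\in\{\lambda_j,2\lambda_j\}$ and $h\in\{f_{S,i},g_i\}$. For any such factor the inner $x$-integration, combined with breaking $[\,\cdot\,]^{+}$ at $x=aW_n+y$, produces exactly the elementary identity used in the proof of Theorem \ref{th}:
$$\int_0^{aW_n+y}\!\!e^{-s(aW_n+y-x)}\mu e^{-\mu x}\,dx+\int_{aW_n+y}^{\infty}\!\!\mu e^{-\mu x}\,dx=\frac{\mu\, e^{-s(aW_n+y)}-s\,e^{-\mu(aW_n+y)}}{\mu-s}.$$
Multiplying by $h(y)$ and integrating in $y$ converts the first summand into $\tilde h_i(s)Z_i^n(as;\theta)$ and the second into $\tilde h_i(\mu)Z_i^n(a\mu;\theta)$, where $\tilde h_i$ is $\beta_i^*$ or $g_i^*$ as appropriate. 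The independent piece reproduces Theorem \ref{th} with $\mu=\lambda_j$; the $2\lambda_j$-FGM piece contributes $g_i^*(s)Z_i^n(as;\theta)$ and $g_i^*(2\lambda_j)Z_i^n(2a\lambda_j;\theta)$ with prefactor $\frac{2\theta\lambda_j}{2\lambda_j-s}$ and $\frac{-2\theta\lambda_j s}{(2\lambda_j)(2\lambda_j-s)}$ respectively; the $\lambda_j$-FGM piece contributes the analogous terms at $\mu=\lambda_j$ with opposite sign and an extra factor $\theta$. Summing the three and letting $n\to\infty$, exactly as in Theorem \ref{th}, yields (\ref{fgma}).

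To repackage (\ref{fgma}) as (\ref{gh}) I would first isolate the coefficient of $\tilde Z(as;\theta)$ and recognise it as $L_1(s)P^{T}B^{*}(s)+\theta L_1(s)(I-L_2(s))P^{T}G^{*}(s)$, which follows from the componentwise identity $(I-L_2(s))_{jj}=\frac{-s}{2\lambda_j-s}$. The remaining quantities depend on $s$ only through the rational factors $\frac{s}{\lambda_j-s}$ and $\frac{s}{2\lambda_j-s}$ multiplying the constants $v_j^{(1)},v_j^{(2)}$, which are rewritten via $(I-L_1(s))_{jj}=\frac{-s}{\lambda_j-s}$ and $(I-L_2(s))_{jj}=\frac{-s}{2\lambda_j-s}$ to produce $(I-L_1(s))\tilde v^{(1)}+\theta(I-L_2(s))\tilde v^{(2)}$. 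The main obstacle is purely bookkeeping: six distinct arguments $s,\lambda_j,2\lambda_j,as,a\lambda_j,2a\lambda_j$ appear simultaneously, and the contributions of $g_i^{*}(s)$ coming from the two FGM summands must be gathered correctly so that the cross terms collapse into the compact $L_1,L_2,G^{*}$ structure in the statement. A minor side task is to verify, as in Lemma \ref{lemma1}, that $U(s;\theta)$ is a contraction under iteration by $a$, which is immediate because $L_1(a^m s),L_2(a^m s)\to I$ and $G^{*}(a^m s)\to G^{*}(0)$ as $m\to\infty$.
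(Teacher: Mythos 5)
Your proof is correct and follows essentially the same route as the paper: split the FGM bivariate density \eqref{biv1} into its three exponential components in the interarrival variable, apply the same inner integration identity that drove Theorem \ref{th} to each component at $\mu\in\{\lambda_j,2\lambda_j\}$, take the Laplace transform in the service variable, recombine, and let $n\to\infty$. The paper simply carries out the two-dimensional integral in one go, while you modularize it around the elementary $\mu$-identity; this is a cleaner presentation of the same computation, and your regrouping of the $Z_i(as;\theta)$ coefficient into $L_1(s)[P^TB^*(s)+\theta(I-L_2(s))P^TG^*(s)]$ and of the constants into $(I-L_1(s))\tilde v^{(1)}+\theta(I-L_2(s))\tilde v^{(2)}$ matches the intended matrix form. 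Two small remarks: first, the statement as printed carries a $\theta$-bookkeeping slip (with $v_j^{(2)}$ already carrying a factor $\theta$, the prefactor $\theta$ in $\tilde K$ yields $\theta^2$ rather than the single $\theta$ in \eqref{fgma}); your derivation silently reproduces that form without flagging it, but the discrepancy is in the paper, not in your argument. Second, your closing remark on the contraction emphasizes $G^*(a^m s)\to G^*(0)$, whereas the operative fact is that $(I-L_2(a^m s))\to 0$, which by itself kills the $\theta$-term so that $U(a^m s;\theta)\to P^T$; the convergence of $G^*$ is beside the point.
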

\begin{proof}
     \begin{displaymath}
    \begin{array}{rl}
        Z_{j}^{n+1}(s;\theta)= &E\left(e^{-sW_{n+1}}1_{\{Y_{n+1}=j\}};\theta\right)\vspace{2mm}\\
         =&\sum_{i=1}^{N}p_{i,j}E\left(\int_{x=0}^{\infty}\int_{y=0}^{a W_{n}+x}e^{-s(aW_{n}+x-y)}\left[f_{S,i}(x)\lambda_{j} e^{-\lambda_{j}y}+\theta g_{i}(x)\left(2\lambda_{j} e^{-2\lambda_{j}y}-\lambda_{j} e^{-\lambda_{j}y}\right)\right]dydx\right.\vspace{2mm}\\
         &\left.+\int_{x=0}^{\infty}\int_{y=a W_{n}+x}^{\infty}\left[f_{S,i}(x)\lambda_{j} e^{-\lambda_{j}y}+\theta g_{i}(x)\left(2\lambda_{j} e^{-2\lambda_{j}y}-\lambda_{j} e^{-\lambda_{j}y}\right)\right]dydx|Y_{n}=i\right)\vspace{2mm}\\
         =&\sum_{i=1}^{N}p_{i,j}\left(Z_{i}^{n}(as;\theta)\left[\frac{\lambda_{j}}{\lambda_{j}-s}\beta_{i}^{*}(s)-\frac{\theta\lambda_{j}sg_{i}^{*}(s)}{(2\lambda_{j}-s)(\lambda_{j}-s)}\right]\right.\vspace{2mm} \\
         &\left.-\frac{s\theta g_{i}^{*}(s)}{2\lambda_{j}-s}Z_{i}^{n}(2a\lambda_{j};\theta)-\frac{s}{\lambda_{j}-s}\left[\beta_{i}^{*}(\lambda_{j})-\theta g_{i}^{*}(\lambda_{j})\right]Z_{i}^{n}(a\lambda_{j};\theta)\right).
    \end{array}
\end{displaymath}
Letting $n\to\infty$ yields \eqref{fgma}. Simple algebraic calculations lead to \eqref{gh}. 
\end{proof}

Iterating \eqref{gh}, and modifying appropriately Lemma \ref{lemma1} we obtain:
\begin{equation}
    \tilde{Z}(s;\theta)=\sum_{n=0}^{\infty}\prod_{d=0}^{n-1}U(a^{d}s;\theta)\Tilde{K}(a^{n}s;\theta)+\prod_{n=0}^{\infty}U(a^{n}s;\theta)\tilde{\pi},\,\tilde{Z}(0;\theta)=\tilde{\pi}.\label{siterr1}
\end{equation}

Note that we still need to obtain $Z_{i}(a\lambda_{j};\theta)$, $Z_{i}(2a\lambda_{j};\theta)$, $i,j=1,\ldots,N$, or equivalently, to obtain the column vectors $\tilde{v}^{(k)}$, $k=1,2$. The following proposition provides information about how we can obtain these vectors.
\begin{proposition}\label{prop2}
    The column vectors $\tilde{v}^{(k)}$, $k=1,2$ are derived as the unique solution of the following system of $2N$ equations:
    \begin{equation}
        \begin{array}{rl}
          v_{j}^{(1)}=   &e_{j}P^{T}(B^{*}(\lambda_{j})-\theta G^{*}(\lambda_{j}))\left[\sum_{n=0}^{\infty}\prod_{d=0}^{n-1}U(a^{d+1}\lambda_{j};\theta)\Tilde{K}(a^{n+1}\lambda_{j};\theta)+\prod_{n=0}^{\infty}U(a^{n+1}\lambda_{j};\theta)\tilde{\pi}\right],\,j=1,\ldots,N,\vspace{2mm}  \\
             v_{j}^{(2)}=   &\theta e_{j}P^{T}G^{*}(2\lambda_{j})\left[\sum_{n=0}^{\infty}\prod_{d=0}^{n-1}U(2a^{d+1}\lambda_{j};\theta)\Tilde{K}(2a^{n+1}\lambda_{j};\theta)+\prod_{n=0}^{\infty}U(2a^{n+1}\lambda_{j};\theta)\tilde{\pi}\right],\,j=1,\ldots,N.
        \end{array}\label{lpo1}
    \end{equation}
\end{proposition}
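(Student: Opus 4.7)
The plan is to mirror the proof of Proposition \ref{prop1}, exploiting the explicit iterated form of $\tilde{Z}(s;\theta)$ displayed just before the proposition statement. The strategy is to evaluate that identity at the two special arguments that appear in the definitions of $\tilde{v}^{(1)}$ and $\tilde{v}^{(2)}$, namely $s=a\lambda_j$ and $s=2a\lambda_j$, and then project onto the appropriate left-multiplier in each case.

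First I would set $s=a\lambda_j$ in the iterated identity
\[
\tilde{Z}(s;\theta)=\sum_{n=0}^{\infty}\prod_{d=0}^{n-1}U(a^{d}s;\theta)\tilde{K}(a^{n}s;\theta)+\prod_{n=0}^{\infty}U(a^{n}s;\theta)\tilde{\pi},
\]
yielding a closed-form expression for $\tilde{Z}(a\lambda_j;\theta)$ in terms of $\tilde{v}^{(1)}$, $\tilde{v}^{(2)}$ (both appearing through $\tilde{K}$) and the known data. Recalling the definition
\[
v_{j}^{(1)}=\sum_{i=1}^{N}p_{i,j}\bigl(\beta_{i}^{*}(\lambda_{j})-\theta g_{i}^{*}(\lambda_{j})\bigr)Z_{i}(a\lambda_{j};\theta)=e_{j}P^{T}\bigl(B^{*}(\lambda_{j})-\theta G^{*}(\lambda_{j})\bigr)\tilde{Z}(a\lambda_{j};\theta),
\]
I would multiply the evaluated identity from the left by $e_{j}P^{T}(B^{*}(\lambda_{j})-\theta G^{*}(\lambda_{j}))$ to obtain the first of the two announced systems of $N$ equations. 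The second set is obtained symmetrically: set $s=2a\lambda_j$ in the iterated identity, then multiply from the left by $\theta e_{j}P^{T}G^{*}(2\lambda_{j})$, using
\[
v_{j}^{(2)}=\theta e_{j}P^{T}G^{*}(2\lambda_{j})\tilde{Z}(2a\lambda_{j};\theta).
\]

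Two technical points need attention. The first is that all the infinite products and series involved (now evaluated at $a^{m+1}\lambda_j$ or $2a^{m+1}\lambda_j$) must converge. This is where a variant of Lemma \ref{lemma1} is needed: writing $U(s;\theta)=L_{1}(s)[P^{T}B^{*}(s)+\theta(I-L_{2}(s))P^{T}G^{*}(s)]$, one observes that as $m\to\infty$ the factors $L_{1}(a^{m}s)$, $L_{2}(a^{m}s)$, $B^{*}(a^{m}s)$ and $G^{*}(a^{m}s)$ all tend to $I$ (with $I-L_{2}(a^{m}s)\to 0$, killing the $\theta$-perturbation), so the norm of $U(a^{m}s;\theta)$ is eventually bounded by $\|P^{T}\|=\max_{i,j}p_{i,j}=:\tau<1$, giving the same geometric decay $c_{s}\tau^{n}$ as in Lemma \ref{lemma1}. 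The same bound applies to the products evaluated along the sequences $a^{m+1}\lambda_j$ and $2a^{m+1}\lambda_j$.

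The second point is uniqueness of the solution of the resulting $2N\times 2N$ linear system. As in Proposition \ref{prop1}, I would argue this indirectly: any solution $(\tilde{v}^{(1)},\tilde{v}^{(2)})$ determines, via the iterated formula, a candidate for $\tilde{Z}(s;\theta)$ satisfying the functional equation \eqref{gh} together with the normalisation $\tilde{Z}(0;\theta)=\tilde{\pi}$; two different solutions would produce two different stationary laws for $\{(W_n,Y_n)\}$, contradicting uniqueness of the stationary distribution, which holds under the assumed stability. The main obstacle is really the verification of the convergence of the modified infinite products, since the extra $\theta(I-L_2)P^{T}G^{*}$ term has to be controlled uniformly in the iteration index; once that is in hand, everything else is a direct substitution and left-multiplication.
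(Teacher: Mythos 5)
Your proposal is correct and takes essentially the same route as the paper: the paper simply remarks that the proof mirrors Proposition~\ref{prop1}, i.e., substitute $s=a\lambda_j$ and $s=2a\lambda_j$ into the iterated identity, left-multiply by $e_jP^{T}(B^{*}(\lambda_j)-\theta G^{*}(\lambda_j))$ and $\theta e_jP^{T}G^{*}(2\lambda_j)$ respectively, and invoke uniqueness of the stationary law. Your handling of the two technical points (the modified Lemma~\ref{lemma1}-type bound, noting that $I-L_2(a^m s)\to 0$ and $G^{*}(a^m s)\to 0$ remove the $\theta$-term so $U(a^m s;\theta)\to P^{T}$, and the indirect uniqueness argument) is exactly what is implicitly being appealed to.
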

\begin{proof}
    The proof is similar to the one in Proposition \ref{prop1} and further details are omitted.
\end{proof}
\begin{remark}
    Note that for $\theta=0$, i.e., the independent copula, Theorem \ref{th2} is reduced to Theorem \ref{th} in Section \ref{sec1}. Indeed, setting $\theta=0$, $F_{S,A|i,j}(y,x)=F_{S,i}(y)F_{A,j}(x)$, i.e., $S_{n}$, $A_{n+1}$ are conditionally independent, given $Y_{n}$ and $Y_{n+1}$, and thus, by substituting $\theta=0$ in \eqref{biv1}, $f_{S,A|i,j}(y,x)=f_{S,i}(y)\lambda_{j}e^{-\lambda_{j}x}$, $i,j\in E = \{1, 2, ..., N\}$. Under such a setting, straightforward computations imply that \eqref{fgma} becomes identical to \eqref{op}, or equivalently \eqref{gh} becomes identical to \eqref{bhjn}. Note also that when $\theta=0$, we only need to derive the vector $\tilde{v}^{(1)}$ (the vector $\tilde{v}^{(2)}$ is no longer necessary).
\end{remark}
\begin{remark}
    The analysis can be extended to the case where given the Markov chain is in state $j$, $A_{n}$ to follow a mixed Erlang distribution at a cost of more complicated expressions; see subsection \ref{xc2}.\end{remark}
    \begin{remark}
    Note that in this section (except the generalization due to the Markov-dependent framework), we further generalize the work in \cite{box1} (and the one in \cite{dimi}) by considering dependencies based on FGM copula. In particular, by setting $N=1$ and $\theta=0$ (i.e., the non-modulated scenario and the independent copula), our model reduces to the one in \cite{box1}. When $N=1$, $\theta\in[-1,1]$, we generalize the work in \cite{box1} by considering the dependence based on FGM copula among $\{S_{n}\}_{n\in\mathbb{N}_{0}}$, $\{A_{n}\}_{n\in\mathbb{N}}$. To our best knowledge, this is the first work that deals with the stationary analysis of Markov-dependent reflected autoregressive processes in the presence of copulas. 
\end{remark}
\section{Dependence based on a class of multivariate matrix-exponential distributions}\label{bila}
We now assume the case where given that $Y_{n}=i$, $Y_{n+1}=j$, the i.i.d. random vector $(S_{n},A_{n+1})$ has a bivariate matrix exponential distribution, thus, its joint Laplace-Stieltjes transform $E\left(e^{-s_{1}S_{n}-s_{2}A_{n+1}}|Y_{n}=i,Y_{n+1}=j\right)$ is a rational function in $s_{1}$, $s_{2}$, i.e., it is written as $\frac{G_{i,j}(s_{1},s_{2})}{D_{i,j}(s_{1},s_{2})}$, where $G_{i,j}(s_{1},s_{2})$, $D_{i,j}(s_{1},s_{2})$ are polynomial functions in $s_{1}$, $s_{2}$. Thus the LST of the difference $X_{n}:=S_{n}-A_{n+1}$ is also a rational function, say $E\left(e^{-sX_{n}}|Y_{n}=i,Y_{n+1}=j\right):=\frac{f_{i,j}(s)}{g_{i,j}(s)}$, $i,j\in E$. Then, letting $U_{n}:=[aW_{n}+S_{n}-A_{n+1}]^{-}$, and using the identity $1+e^{x}=e^{[x]^{+}}+e^{[x]^{-}}$, where $[x]^{+}:=max(0,x)$, $[x]^{-}:=min(0,x)$, we obtain  
\begin{displaymath}
    \begin{array}{rl}
         Z_{j}^{n+1}(s)=& E\left(e^{-sW_{n+1}}1_{\{Y_{n+1}=j\}}\right)=\sum_{i=1}^{N}P(Y_{n}=i)E\left(e^{-sW_{n+1}}1_{\{Y_{n+1}=j\}}|Y_{n}=i\right)\vspace{2mm}\\=&\sum_{i=1}^{N}P(Y_{n}=i)E\left(e^{-s[aW_{n}+S_{n}-A_{n+1}]^{+}}1_{\{Y_{n+1}=j\}}|Y_{n}=i\right)  \vspace{2mm} \\
         =& \sum_{i=1}^{N}P(Y_{n}=i)E\left(\left(e^{-s(aW_{n}+S_{n}-A_{n+1})}+1-e^{-sU_{n}}\right)1_{\{Y_{n+1}=j\}}|Y_{n}=i\right) \vspace{2mm} \\
         =& \sum_{i=1}^{N}P(Y_{n}=i)p_{i,j}E\left(e^{-s(aW_{n}+S_{n}-A_{n+1})}+1-e^{-sU_{n}}|Y_{n+1}=j,Y_{n}=i\right)\vspace{2mm} \\
         =&\sum_{i=1}^{N}P(Y_{n}=i)p_{i,j}E\left(e^{-saW_{n}}|Y_{n}=i\right)E\left(e^{-s(S_{n}-A_{n+1})}|Y_{n+1}=j,Y_{n}=i\right)\vspace{2mm} \\&+1-\sum_{i=1}^{N}P(Y_{n}=i)p_{i,j}E\left(e^{-sU_{n}}|Y_{n+1}=j,Y_{n}=i\right)\vspace{2mm} \\
         =&\sum_{i=1}^{N}p_{i,j}Z_{i}^{n}(as)\frac{f_{i,j}(s)}{g_{i,j}(s)}+1-U_{j,n}^{-}(s),
    \end{array}
\end{displaymath}
where $U_{j,n}^{-}(s):=\sum_{i=1}^{N}P(Y_{n}=i)p_{i,j}E\left(e^{-sU_{n}}|Y_{n+1}=j,Y_{n}=i\right)$. Letting $n\to\infty$ so that $Z_{j}^{n}(s)\to Z_{j}(s)$, $U_{j,n}^{-}(s)\to U_{j}(s)$ we arrive at,
\begin{equation}
    Z_{j}(s)=\sum_{i=1}^{N}p_{i,j}Z_{i}(as)\frac{f_{i,j}(s)}{g_{i,j}(s)}+1-U_{j}^{-}(s).\label{eq1}
\end{equation}

For $i,j=1,\ldots,N$, let $m_{+}(i,j)$ be the number of zeros of $g_{i,j}(s)$ in $Re(s)\geq 0$, and let $g_{i,j}^{+}(s):=\prod_{l=1}^{m_{+}(i,j)}(s-s_{l,j}^{+})$, i.e., $s_{l,j}^{+}$, $l=1,\ldots,m_{+}(i,j)$, $j=1,\ldots,N$ are the zeros of $g_{i,j}(s)$ in $Re(s)\geq 0$; and $g^{-}_{i,j}(s)=g_{i,j}(s)/g_{i,j}^{+}(s)$. Thus, we can write for $j=1,\ldots,N,$
\begin{equation}
    \begin{array}{c}
         Z_{j}(s)\prod_{i=1}^{N}g_{i,j}^{+}(s)-\sum_{i=1}^{N}p_{i,j}f_{i,j}(s)\frac{\prod_{k\neq i}g_{k,j}(s)}{\prod_{t=1}^{N}g_{t,j}^{-}(s)}Z_{i}(as)=\prod_{i=1}^{N}g_{i,j}^{+}(s)[1-U_{j}^{-}(s)]. 
    \end{array}\label{fty}
\end{equation}
Then, we have the following:
\begin{itemize}
    \item The LHS of \eqref{fty} is analytic in $Re(s) > 0$ and continuous in $Re(s)\geq 0$.
    \item The RHS of \eqref{fty} is analytic in $Re(s) < 0$ and continuous in $Re(s)\leq 0$.
    \item $Z_{j}(s)$ is for $Re(s)\geq 0$ bounded by 1, and hence, the LHS of \eqref{fty} behaves at most as a polynomial of degree at most $\sum_{i=1}^{N}m_{+}(i,j)$ in $s$ for large $s$, with $Re(s) > 0$.
    \item $U_{j}^{-}(s)$ is for $Re(s) \leq  0$ bounded by 1, and hence, the RHS of \eqref{fty} behaves at most
as a polynomial of degree at most $\sum_{i=1}^{N}m_{+}(i,j)$ in $s$ for large $s$, with $Re(s) < 0$.
\end{itemize}
Thus, Liouville’s theorem \cite[Theorem 10.52]{tit} implies that both sides in \eqref{fty} are equal
to the same polynomial in s, in their respective half-planes. Therefore, for $Re(s)\geq 0$, and $j=1,\ldots,N,$
\begin{equation}
Z_{j}(s)\prod_{i=1}^{N}g_{i,j}^{+}(s)-\sum_{i=1}^{N}p_{i,j}f_{i,j}(s)\frac{\prod_{k\neq i}g_{k,j}(s)}{\prod_{t=1}^{N}g_{t,j}^{-}(s)}Z_{i}(as)= \sum_{m=0}^{\sum_{i=1}^{N}m_{+}(i,j)}c_{m}^{(j)}s^{m}.\label{xz}
\end{equation}
Setting $s=0$ in \eqref{xz}, and having in mind that $f_{i,j}(0)=g_{i,j}(0)$ yields $c_{0}^{(j)}=0$, $j=1,\ldots,N$. In matrix notation, \eqref{xz} can be rewritten as
\begin{equation}
    \Tilde{Z}(s)=H(s)\Tilde{Z}(as)+\Tilde{V}(s),\label{xz1}
\end{equation}
where,
\begin{displaymath}\begin{array}{rl}
    H(s):=&\begin{pmatrix}
        p_{1,1}\frac{f_{1,1}(s)}{g_{1,1}(s)}&p_{2,1}\frac{f_{2,1}(s)}{g_{2,1}(s)}&\ldots&p_{N,1}\frac{f_{N,1}(s)}{g_{N,1}(s)}\\
        p_{1,2}\frac{f_{1,2}(s)}{g_{1,2}(s)}&p_{2,2}\frac{f_{2,2}(s)}{g_{2,2}(s)}&\ldots&p_{N,2}\frac{f_{N,2}(s)}{g_{N,2}(s)}\\
        \vdots&\vdots&\ddots&\vdots\\
        p_{1,n}\frac{f_{1,N}(s)}{g_{1,N}(s)}&p_{2,N}\frac{f_{2,N}(s)}{g_{2,N}(s)}&\ldots&p_{N,N}\frac{f_{N,N}(s)}{g_{N,N}(s)}
    \end{pmatrix},\vspace{2mm}\\
    \Tilde{V}(s):=&(\sum_{m=1}^{\sum_{i=1}^{N}m_{+}(i,1)}c_{m}^{(1)}s^{m},\ldots,\sum_{m=1}^{\sum_{i=1}^{N}m_{+}(i,N)}c_{m}^{(N)}s^{m})^{T}.
\end{array}
\end{displaymath}

Note that $H(a^{m}s)\to P^{T}$ as $m\to\infty$. Thus, we can apply Lemma \ref{lemma1} to study the convergence of the iterating procedure. Therefore, by iterating \eqref{xz1}, and applying appropriately Lemma \ref{lemma1} we obtain:
\begin{equation}
    \tilde{Z}(s)=\sum_{n=0}^{\infty}\prod_{d=0}^{n-1}H(a^{d}s)\Tilde{V}(a^{n}s)+\prod_{n=0}^{\infty}H(a^{n}s)\tilde{\pi},\label{xz2}
\end{equation}
with $\tilde{Z}(0)=\tilde{\pi}$. The remaining $\sum_{i=1}^{N}\sum_{j=1}^{N}m_{+}(i,j)$ unknown constants are obtained by performing the following
three steps:
\begin{enumerate}
    \item Substitute $s=s_{l,j}^{+}$, $l=1,\ldots,m_{+}(i,j)$, $i,j=1,\ldots,N$ in \eqref{fty} to obtain the following set of equations:
\begin{equation}
    -\sum_{i=1}^{N}p_{i,j}f_{i,j}(s_{l,j}^{+})\frac{\prod_{k\neq i}g_{k,j}(s_{l,j}^{+})}{\prod_{t=1}^{N}g_{t,j}^{-}(s_{l,j}^{+})}Z_{i}(as_{l,j}^{+})= \sum_{m=1}^{\sum_{i=1}^{N}m_{+}(i,j)}c_{m}^{(j)}(s_{l,j}^{+})^{m},\label{b1}
\end{equation}
thus, linearly expressing $Z_{i}(as_{l,j}^{+})$, $i,j=1,\ldots,N$, $l=1,\ldots,m_{+}(i,j)$ with the unknown constants $c_{m}^{(j)}$, $m=1,2,\ldots,\sum_{i=1}^{N}m_{+}(i,j)$, $j=1,\ldots,N$.
\item Substitute $s=as_{l,j}^{+}$, $i,j=1,\ldots,N$, $l=1,\ldots,m_{+}(i,j),$ in \eqref{xz2} to obtain:
\begin{equation}
    \tilde{Z}(as_{l,j}^{+})=\sum_{n=0}^{\infty}\prod_{d=0}^{n-1}H(a^{d+1}s_{l,j}^{+})\Tilde{V}(a^{n+1}s_{l,j}^{+})+\prod_{n=0}^{\infty}H(a^{n+1}s_{l,j}^{+})\tilde{\pi},\label{xnz2}
\end{equation}
thus, linearly expressing $Z_{i}(as_{l,j}^{+})$, $i,j=1,\ldots,N$, $l=1,\ldots,m_{+}(i,j)$ with the unknown constants $c_{m}^{(j)}$, $m=1,2,\ldots,\sum_{i=1}^{N}m_{+}(i,j)$, $j=1,\ldots,N$, in a different way.
\item Eliminate all $Z_{i}(as_{l,j}^{+})$, $i,j=1,\ldots,N$, $l=1,\ldots,m_{+}(i,j)$ from the latter $\sum_{i=1}^{N}\sum_{j=1}^{N}m_{+}(i,j)$ equations using the former $\sum_{i=1}^{N}\sum_{j=1}^{N}m_{+}(i,j)$ equations, and
then solve the resulting set of $\sum_{i=1}^{N}\sum_{j=1}^{N}m_{+}(i,j)$ linear equations in $c_{m}^{(j)}$, $m=1,2,\ldots,\sum_{i=1}^{N}m_{+}(i,j)$, $j=1,\ldots,N$.
\end{enumerate}
Thus, $\Tilde{V}(s)$ is considered known and the transform vector $\tilde{Z}(s)$ is given by \eqref{xz2}.

\section{The Markov-modulated reflected autoregressive process}\label{mm}
This section is devoted to the analysis of the transient behaviour of a Markov-modulated reflected autoregressive process, i.e., the time-dependent analysis of a generalized version of the model in Section \ref{sec1} with a more complicated dependence structure; see also Remark \ref{remm}. In queueing terms, consider an M/G/1-type queue with Markov-modulated arrivals and services. Let $\{X(t);t\geq 0\}$ be the background process that dictates the arrivals and services. $\{X(t);t\geq 0\}$ is a Markov chain on $E=\{1,2,\ldots,N\}$ with infinitesimal generator $Q=(q_{i,j})_{i,j\in E}$, and denote its stationary distribution by $\widehat{\pi}=(\pi_{1},\ldots,\pi_{N})$, i.e., $\widehat{\pi}Q=0$, and $\widehat{\pi} \mathbf{1}=1$, where $\mathbf{1}$ is the $N\times 1$ column vector with all components equal to 1.

Customers arrive at time epochs $T_{1},T_{2},\ldots$, $T_{1}=0$, and service times are denoted by $S_{1},S_{2},\ldots$. If $X(t)=i$, arrivals occur according to a Poisson process with rate $\lambda_{i}>0$ and an arriving customer has a service time $B{i}$, with cdf $B_{i}(.)$. pdf $b_{i}(.)$, LST $\beta_{i}^{*}(.)$ and $\bar{b}_{i}=-\beta_{i}^{*\prime}(0)$, $i=1,\ldots,N$, and $B^{*}(s):=diag(\beta_{1}^{*}(s),\ldots,\beta_{N}^{*}(s))$. We assume that given the state of the background process $\{X(t);t\geq 0\}$, $S_{1},S_{2},\ldots$ are independent and independent of the arrival process. Let $A_{n}=T_{n}-T_{n-1}$, $n=2,3,\ldots$, and $Y_{n}=X_{T_{n}}$, $n=1,2,\ldots$. We are interested in the workload $W_{n}$ in the system just before the $n$th customer arrival, and we assume that such an arrival makes obsolete a fixed fraction $1-a$ of the already present work, irrespectively of the state of $X(t)$. This model can be considered as the "autoregressive" analogue of the work in \cite{regte}. Under such a scenario, for $n\geq 0$, $x,y\geq 0$, $i,j=1,\ldots,N$:
\begin{equation*}
    \begin{array}{l}
         P(A_{n+1}\leq x,S_{n}\leq y,Y_{n+1}=j|Y_{n}=i,A_{2},\ldots,A_{n},S_{1},\ldots,S_{n-1},Y_{1},\ldots,Y_{n-1}) \vspace{2mm}\\
        =  P(A_{n+1}\leq x,S_{n}\leq y,Y_{n+1}=j|Y_{n}=i),
    \end{array}
\end{equation*}
and the model is fully specified by the functions
\begin{equation}
    G_{i,j}(s,\eta):=E\left(e^{-s A_{n+1}-\eta S_{n}}1_{\{Y_{n+1}=j\}}|Y_{n}=i\right)=A_{i,j}(s)\beta_{i}^{*}(\eta),\label{fun}
    \end{equation}
    where,
    \begin{displaymath}
    A_{i,j}(s):=E\left(e^{-s A_{n}}1_{\{Y_{n}=j\}}|Y_{n-1}=i\right),\,i,j\in E,
\end{displaymath}
and $G(s,\eta)$, $A(s)$ the $N\times N$ matrix with elements $G_{i,j}(s,\eta)$, $A_{i,j}(s)$, $i,j\in E$, $Re(s)\geq 0$, $Re(\eta)\geq 0$, respectively.
\begin{remark}\label{remm}
    Note that the model analysed in Section \ref{sec1} corresponds to the case where $A_{i,j}(s)=p_{i,j}u_{j}^{*}(s)$, where in subsection \ref{xc}, $u_{j}^{*}(s):=\frac{\lambda_{j}}{\lambda_{j}+s}$, while in subsection \ref{xc2}, $u_{j}^{*}(s):=\sum_{m=1}^{M}q_{m}\left(\frac{\lambda_{j}}{\lambda_{j}+s}\right)^{m}$.
\end{remark}

Assume that $W_{1}=w$ and let for $Re(s)\geq 0$, $Re(\eta)\geq 0$, $|r|<1$,
\begin{displaymath}
    Z_{j}^{w}(r,s,\eta):=\sum_{n=1}^{\infty}r^{n}E\left(e^{-s W_{n}-\eta T_{n}}1_{\{Y_{n}=j\}}|W_{1}=w\right),\,j=1,\ldots,N,
\end{displaymath}
and $\tilde{Z}^{w}(r,s,\eta)=(Z_{1}^{w}(r,s,\eta),\ldots,Z_{N}^{w}(r,s,\eta))^{T}$. Following the lines in \cite[Lemma 2.1]{regte}, we have that $A(s)=M^{-1}(s)\Lambda$, where $\Lambda:=diag(\lambda_{1},\ldots,\lambda_{N})$, and $M(s)=s I+\Lambda-Q$. Let also
\begin{displaymath}
    V_{j}^{w}(r,s,\eta):=\sum_{n=1}^{\infty}r^{n+1}E\left((1-e^{-s [a W_{n}+S_{n}-A_{n+1}]^{-}})e^{-\eta T_{n+1}}1_{\{Y_{n+1}=j\}}|W_{1}=w\right),\,j=1,\ldots,N,
\end{displaymath}
with $\tilde{V}^{w}(r,s,\eta):=(V_{1}^{w}(r,s,\eta),\ldots,V_{N}^{w}(r,s,\eta))^{T}$, and let $p_{j}:=P(X_{0}=j)$, $j=1,\ldots,N$ with $\widehat{p}:=(p_{1},\ldots,p_{N})^{T}$.
\begin{theorem}\label{thr}
    For $Re(s)=0$, $Re(\eta)\geq 0$, $|r|<1$,
    \begin{equation}
        Z^{w}_{j}(r,s,\eta)-rp_{j}e^{-s w}=r\sum_{i=1}^{N}Z^{w}_{i}(r,as,\eta)\beta_{i}^{*}(s)A_{i,j}(\eta-s)+V_{j}^{w}(r,s,\eta),\,j=1,2,\ldots,N,
        \label{eqx1}
    \end{equation}
    or equivalently, in matrix notation,
    \begin{equation}
        \tilde{Z}^{w}(r,s,\eta)-r\Lambda(M^{T}(\eta-s))^{-1}B^{*}(s)\tilde{Z}^{w}(r,as,\eta)=re^{-s w}\widehat{p}+\tilde{V}^{w}(r,s,\eta).\label{eq2}
    \end{equation}
\end{theorem}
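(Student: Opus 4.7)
The plan is to emulate the derivation in Theorem \ref{th} but with the extra time-variable $T_{n+1}=T_n+A_{n+1}$ tracked, and to avoid splitting the expectation into $\int_0^{aW_n+S_n}$ and $\int_{aW_n+S_n}^{\infty}$ by instead using the Pollaczek-type identity
\begin{equation*}
    e^{-\phi [x]^{+}}=e^{-\phi x}+1-e^{\phi [x]^{-}},\qquad x\in\mathbb{R},
\end{equation*}
which is valid for any complex $\phi$, and in particular bounded for $Re(\phi)=0$. Applied to $x=aW_n+S_n-A_{n+1}$, this turns $e^{-\phi W_{n+1}}$ into an affine combination of a product term $e^{-a\phi W_n}e^{-\phi S_n}e^{\phi A_{n+1}}$ and the boundary term $e^{\phi[aW_n+S_n-A_{n+1}]^{-}}-1$, which is exactly the integrand that will be collected into $V_j^w(r,-\phi,\eta)$.

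First, I multiply the identity by $e^{-\eta T_{n+1}}1_{\{Y_{n+1}=j\}}$ and write $e^{-\eta T_{n+1}}=e^{-\eta T_n}e^{-\eta A_{n+1}}$. Then I take expectations conditional on $(W_n,Y_n)$, using that $(S_n,A_{n+1},Y_{n+1})$ is conditionally independent of $W_n$ and that, by \eqref{fun}, $S_n$ and $(A_{n+1},Y_{n+1})$ factorise given $Y_n=i$. Conditioning further on $Y_n=i$ and summing over $i$ yields
\begin{equation*}
\begin{array}{rl}
    E(e^{-\phi W_{n+1}-\eta T_{n+1}}1_{\{Y_{n+1}=j\}}|W_1=w)= & \sum_{i=1}^{N}E(e^{-a\phi W_n-\eta T_n}1_{\{Y_n=i\}}|W_1=w)\beta_i^{*}(\phi)A_{i,j}(\eta-\phi) \\
    & -E\bigl((e^{\phi[aW_n+S_n-A_{n+1}]^{-}}-1)e^{-\eta T_{n+1}}1_{\{Y_{n+1}=j\}}|W_1=w\bigr),
\end{array}
\end{equation*}
where I have used that $e^{-\phi S_n+\phi A_{n+1}-\eta A_{n+1}}=e^{-\phi S_n}e^{-(\eta-\phi)A_{n+1}}$ and the definitions of $\beta_i^{*}$ and $A_{i,j}$.

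Next, I multiply by $r^{n+1}$ and sum over $n\geq 1$. The left-hand side becomes $\sum_{m=2}^{\infty}r^m E(e^{-\phi W_m-\eta T_m}1_{\{Y_m=j\}}|W_1=w)=Z_j^{w}(r,\phi,\eta)-rp_j e^{-\phi w}$, where the missing $n=1$ term provides the initial-condition contribution $rp_j e^{-\phi w}$. The first sum on the right-hand side pulls the factor $r$ outside and recognises $\sum_{n=1}^{\infty}r^n E(e^{-a\phi W_n-\eta T_n}1_{\{Y_n=i\}}|W_1=w)=Z_i^{w}(r,a\phi,\eta)$; the boundary term is precisely $V_j^{w}(r,-\phi,\eta)$ by definition. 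This yields \eqref{eqx1}. The matrix form \eqref{eq2} then follows by noting that the coefficient matrix with $(j,i)$-entry $\beta_i^{*}(\phi)A_{i,j}(\eta-\phi)$ equals $A(\eta-\phi)^{T}B^{*}(\phi)$, and by using the identity $A(\phi)=M^{-1}(\phi)\Lambda$ recalled from \cite{regte} so that $A(\eta-\phi)^{T}=\Lambda(M^{T}(\eta-\phi))^{-1}$.

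The only delicate point is ensuring that every expectation and every series above is well defined. This is why the statement restricts to $Re(\phi)=0$: on the imaginary axis the factors $e^{\phi A_{n+1}}$ and $e^{\phi[\cdot]^{-}}$ are of modulus $1$, so the Pollaczek identity may be taken under the expectation without integrability concerns, and the power series in $r$ converge absolutely because $|r|<1$ bounds the $n$th term by $|r|^{n}$ uniformly. No analytic continuation in $\phi$ is required for the statement as given, which keeps the argument clean.
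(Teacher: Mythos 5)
Your proof uses the same ingredients as the paper's: a Pollaczek-type identity to split off the boundary term, the factorisation
\begin{equation*}
E\bigl(e^{-\phi S_n-(\eta-\phi)A_{n+1}}1_{\{Y_{n+1}=j\}}\,\big|\,Y_n=i\bigr)=\beta_i^{*}(\phi)A_{i,j}(\eta-\phi)
\end{equation*}
coming from the structure in \eqref{fun} together with $T_{n+1}=T_n+A_{n+1}$, the generating-function sum over $n\geq 1$ with the $n=1$ term producing $rp_je^{-\phi w}$, and the identification of the coefficient matrix with $A(\eta-\phi)^{T}B^{*}(\phi)=\Lambda(M^{T}(\eta-\phi))^{-1}B^{*}(\phi)$. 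Structurally this is exactly the paper's argument, and your choice of multiplying by $r^{n+1}$ from the outset is a clean way to match the $r^{n+1}$ already built into the definition of $V_j^w$.

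One point deserves care, however. Under the paper's explicitly stated convention $[x]^{-}:=\min(x,0)\leq 0$, the correct identity is $e^{-\phi[x]^{+}}=e^{-\phi x}+1-e^{-\phi[x]^{-}}$, with a minus sign inside the last exponent. Combined with the paper's definition of $V_j^w$ (which contains $1-e^{-\phi[\cdot]^{-}}$), this decomposition yields the boundary term $V_j^w(r,\phi,\eta)$; and indeed the paper's own intermediate display in the proof produces $V_j^w(r,\phi,\eta)$ before the theorem statement silently switches to $V_j^w(r,-\phi,\eta)$. The identity you wrote, $e^{-\phi[x]^{+}}=e^{-\phi x}+1-e^{\phi[x]^{-}}$, is valid only under the alternative convention $[x]^{-}:=\max(-x,0)\geq 0$, which is not the one in force here; as a result you have inherited the same sign slip that appears in the printed statement, but via an identity that is false with the paper's $[\cdot]^{-}$. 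The rest of the argument is correct; you should simply state the Pollaczek identity with $e^{-\phi[x]^{-}}$ and then observe that the theorem's $-\phi$ inside $V_j^w$ needs to be reconciled with the definition of $V_j^w$ (the Wiener-Hopf boundedness considerations used later in Section \ref{mm} also suggest the boundary term should be $V_j^w(r,\phi,\eta)$).
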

\begin{proof}
    Using the identity $e^{-s [x]^{+}}+e^{-s [x]^{-}}=e^{-s x}+1$, we have for $Re(s)=0$, $Re(\eta)\geq 0$, $|r|<1$,
    \begin{displaymath}
        \begin{array}{l}
            E\left(e^{-s W_{n+1}-\eta T_{n+1}}1_{\{Y_{n+1}=j\}}|W_{1}=w\right)= E\left(e^{-s [a W_{n}+S_{n}-A_{n+1}]^{+}-\eta T_{n+1}}1_{\{Y_{n+1}=j\}}|W_{1}=w\right)\vspace{2mm}  \\
            =  E\left((e^{-s[a W_{n}+S_{n}-A_{n+1}]}+1-e^{-s [a W_{n}+S_{n}-A_{n+1}]^{-}})e^{-\eta T_{n+1}}1_{\{Y_{n+1}=j\}}|W_{1}=w\right)\vspace{2mm}\\
            =  E\left(e^{-s [a W_{n}+S_{n}-A_{n+1}]-\eta T_{n+1}}1_{\{Y_{n+1}=j\}}|W_{1}=w\right)
            +E\left((1-e^{-s [a W_{n}+S_{n}-A_{n+1}]^{-}})e^{-\eta T_{n+1}}1_{\{Y_{n+1}=j\}}|W_{1}=w\right).
        \end{array}
    \end{displaymath}
    Note that
    \begin{displaymath}
        \begin{array}{l}
            E\left(e^{-s (a W_{n}+S_{n}-A_{n+1})-\eta T_{n+1}}1_{\{Y_{n+1}=j\}}|W_{1}=w\right) =E\left(e^{-s (a W_{n}+S_{n}-A_{n+1})-\eta (A_{n+1}+T_{n})}1_{\{Y_{n+1}=j\}}|W_{1}=w\right)  \vspace{2mm}\\
            = E \left(e^{-s a W_{n}-\eta T_{n}}e^{-s S_{n}-(\eta-s)A_{n+1}}1_{\{Y_{n+1}=j\}}|W_{1}=w\right)
        \vspace{2mm}\\=\sum_{i=1}^{N}E\left(e^{-s a W_{n}-\eta T_{n}}1_{\{Y_{n+1}=j\}}|W_{1}=w\right)\beta_{i}^{*}(s)A_{i,j}(\eta-s).
        \end{array}
    \end{displaymath}
    Thus, for $Re(s)=0$, $Re(\eta)\geq 0$, $|r|<1$,
    \begin{equation}
        \begin{array}{rl}
         E\left(e^{-s W_{n+1}-\eta T_{n+1}}1_{\{Y_{n+1}=j\}}|W_{1}=w\right)=    & \sum_{i=1}^{N}E\left(e^{-s a W_{n}-\eta T_{n}}1_{\{Y_{n}=i\}}|W_{1}=w\right)\beta_{i}^{*}(s)A_{i,j}(\eta-s)\vspace{2mm} \\
         &+E\left((1-e^{-s [a W_{n}+S_{n}-A_{n+1}]^{-}})e^{-\eta T_{n+1}}1_{\{Y_{n+1}=j\}}|W_{1}=w\right).
        \end{array}\label{rty}
    \end{equation}
    Multiplying \eqref{rty} by $r^{n+1}$ and taking the sum of $n=1$ to infinity gives:
    \begin{displaymath}
        Z^{w}_{j}(r,s,\eta)-r E\left(e^{-s W_{1}-\eta T_{1}}1_{\{Y_{1}=j\}}|W_{1}=w\right)=r\sum_{i=1}^{N}Z^{w}_{i}(r,as,\eta)\beta_{i}^{*}(s)A_{i,j}(\eta-s)+V_{j}^{w}(r,s,\eta).
    \end{displaymath}
    Note that $T_{1}=0$ and,
    \begin{displaymath}
        E\left(e^{-s W_{1}-\eta T_{1}}1_{\{Y_{1}=j\}}|W_{1}=w\right)=E\left(e^{-s w}1_{\{X_{0}=j\}}\right)=e^{-s w}P(X_{0}=j)=e^{-s w}p_{j}.
    \end{displaymath}
   Substituting back in \eqref{rty} yields the system of Wiener-Hopf equations \eqref{eqx1}, which in matrix notation, is equivalent to \eqref{eq2}. 
\end{proof}
\begin{remark}\label{rem2}
Note that \eqref{eq2} is the matrix analogue of equation (2.1) in \cite{box1}, i.e., it is of the form:
    \begin{equation*}
        \mathbf{f}(r,s,\eta)=\mathbf{g}(r,s,\eta)\mathbf{f}(r,a s,\eta)+\mathbf{K}(r,s,\eta),
    \end{equation*}
    with $\mathbf{g}(r,s,\eta):=r\Lambda (M^{T}(\eta-s))^{-1}B^{*}(s)$, $\mathbf{K}(r,s,\eta):=re^{-s w}\widehat{p}+\tilde{V}^{w}(r,-s,\eta)$,  $\mathbf{g}(r,0,\eta)=r\Lambda(M^{T}(\eta))^{-1}=rA^{T}(\eta)$, $\mathbf{K}(r,0,\eta)=r\widehat{p}\neq \mathbf{0}$. Note that for $|r|<1$, $Re(\eta)\geq 0$, $|\mathbf{g}_{i,j}(r,0,\eta)|<1$, $|\mathbf{K}_{i,j}(r,0,\eta)|<1$, $i,j\in E$. Moreover, if $||x||_{\infty}=max_{1\leq i\leq N}\{x_{i}\}$ for a vector $\mathbf{x}=(x_{1},\ldots,x_{N})$, and for a matrix $M=(M_{i,j})_{1\leq i,j\leq N}$, let $||M||=max_{1\leq i,j\leq N}|M_{i,j}|$, then $||\mathbf{g}(r,0,\eta)||<1$,  $||\mathbf{K}(r,0,\eta)||<1$.

It would be also of high importance to consider recursions that result in the following matrix functional equations:
\begin{equation*}
        \mathbf{f}(r,s,\eta)=\mathbf{g}(r,\zeta(s),\eta)\mathbf{f}(r,\zeta(s),\eta)+\mathbf{K}(r,s,\eta).
    \end{equation*}
In our case $\zeta(s)=as$. It seems that for  general $\zeta(s)$, the analysis can be similarly handled when we ensure that $\zeta(s)$ is a contraction on $\{s\in\mathbb{C}:Re(s)\geq 0\}$.
\end{remark}
\begin{lemma}
\begin{enumerate}
    \item The $N$ eigenvalues, say $\nu_{i}$, $i=1\ldots,N$, of $\Lambda-Q^{T}$ all lie in $Re(s)>0$.
    \item The $N$ zeros of $det((\eta-s)I+\Lambda-Q^{T})=0$ for $Re(\eta)\geq 0$, say $\mu_{i}(\eta)$, $i=1,\ldots,N$, are all in $Re(s)>0$ (i.e., For $Re(s)=0$, $Re(\eta)\geq 0$, $det((\eta-s)I+\Lambda-Q^{T})\neq 0$), and such that $\mu_{i}(\eta)=\nu_{i}+\eta$, $i=1,\ldots,N$. 
\end{enumerate}
\end{lemma}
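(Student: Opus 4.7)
The plan is to deduce both statements from a single Gershgorin-disk estimate for the spectrum of $\Lambda-Q^{T}$. For part~2, I first observe that $\det((\eta-\phi)I+\Lambda-Q^{T})=0$ is equivalent to $\phi-\eta$ being an eigenvalue of $\Lambda-Q^{T}$, so the $N$ zeros are exactly $\mu_{i}(\eta)=\nu_{i}+\eta$, as claimed. Once part~1 is established, the bound $\mathrm{Re}(\mu_{i}(\eta))=\mathrm{Re}(\nu_{i})+\mathrm{Re}(\eta)>0$ for $\mathrm{Re}(\eta)\geq 0$ is immediate, which is precisely the non-vanishing assertion on the imaginary axis $\mathrm{Re}(\phi)=0$.

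For part~1, since $\Lambda$ is diagonal, $\Lambda-Q^{T}$ and its transpose $\Lambda-Q$ share the same spectrum, so I may apply Gershgorin's circle theorem to the rows of $\Lambda-Q$. The $i$-th row has diagonal entry $\lambda_{i}-q_{i,i}$ and off-diagonal entries $-q_{i,j}$ for $j\neq i$. Because $q_{i,j}\geq 0$ for $i\neq j$ and $\sum_{j}q_{i,j}=0$ (the row-sum property of the generator $Q$), the $i$-th Gershgorin radius equals
\[
\sum_{j\neq i}|-q_{i,j}|=\sum_{j\neq i}q_{i,j}=-q_{i,i}=|q_{i,i}|,
\]
and the corresponding disk is centered at the real number $\lambda_{i}-q_{i,i}=\lambda_{i}+|q_{i,i}|$. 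The leftmost real part attained in this disk is therefore $\lambda_{i}+|q_{i,i}|-|q_{i,i}|=\lambda_{i}>0$, so every eigenvalue $\nu_{i}$ of $\Lambda-Q^{T}$ satisfies $\mathrm{Re}(\nu_{i})\geq \min_{j}\lambda_{j}>0$.

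There is no substantive obstacle; the only point requiring care is to apply Gershgorin in the correct orientation, namely to the rows of $\Lambda-Q$ (equivalently, to the columns of $\Lambda-Q^{T}$), so that the zero-row-sum property of $Q$ can be invoked to evaluate each radius as $|q_{i,i}|$. Applying the theorem naively to the rows of $\Lambda-Q^{T}$ would instead produce radii $\sum_{j\neq i}q_{j,i}$, which involves column sums of $Q$ and does not collapse. With the orientation chosen correctly, both parts follow essentially by inspection.
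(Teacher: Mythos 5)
Your proof is correct, and both you and the paper are ultimately running a diagonal-dominance argument to locate the spectrum of $\Lambda-Q^{T}$. The paper invokes a matrix-Rouch\'e / zero-counting result from a cited reference (comparing $\det(\Lambda-Q^{T}-\phi I)$ with $\det(R(\phi))$ after the split $\Lambda-Q^{T}-\phi I = S(\phi)+R(\phi)$), whereas you apply Gershgorin directly after passing to the transpose $\Lambda-Q$. These are close relatives, and in this problem they buy the same conclusion; your version is shorter, needs no external zero-counting theorem, and also gives the quantitative bound $\mathrm{Re}(\nu_i)\geq\min_j\lambda_j$.

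Your caution about the orientation is in fact the more interesting point of the proposal, and it is well placed: the row $i$ of the off-diagonal matrix associated with $\Lambda-Q^{T}$ carries the entries $q_{j,i}$ (column $i$ of $Q$), so the naive row radius is the column sum $\sum_{j\neq i}q_{j,i}$, which the generator property does not control. The paper's displayed chain nevertheless equates $\sum_{j}|s_{i,j}(\phi)|$ with the row sum $\sum_{j\neq i}|q_{i,j}| = -q_{i,i}$, which is only legitimate if one works with $\Lambda-Q$ (equivalently, the column disks of $\Lambda-Q^{T}$) -- exactly the transposition step you make explicit. One very minor remark: to justify diagonal dominance on the boundary $\mathrm{Re}(\phi)=0$, it is cleaner to observe $|\lambda_i+q_i-\phi|\geq\lambda_i+q_i>q_i$ there (since $\lambda_i+q_i$ is the real part of $\lambda_i+q_i-\phi$), rather than the paper's bound $\lambda_i+q_i-|\phi|$, which is not useful for large $|\phi|$; your Gershgorin formulation sidesteps this issue entirely. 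Part~2 of your argument -- translating eigenvalues by $\eta$ and adding $\mathrm{Re}(\eta)\geq 0$ -- is exactly right and matches the paper.
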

\begin{proof}
Note that $\Lambda-Q^{T}-s I:=\Phi+R(s)$, where $R(s)=diag(\lambda_{1}+q_{1}-s,\ldots,\lambda_{N}+q_{N}-s)$, and 
\begin{displaymath}
    \Phi:=(\phi_{i,j})_{i,j=1,\ldots,N}:=\begin{pmatrix}
0&q_{2,1}&\ldots&q_{N,1}\\
q_{1,2}&0&\ldots&q_{N,2}\\
\vdots&\vdots&\vdots\\
q_{1,N}&q_{2,N}&\ldots&0\\
\end{pmatrix}.
\end{displaymath}
Moreover,
\begin{displaymath}
    |\lambda_{i}+q_{i}-s|\geq \lambda_{i}+q_{i}-|s|>q_{i}=|-q_{i,i}|=\sum_{j\neq i}|q_{i,j}|=\sum_{j=1}^{N}|\phi_{i,j}|.
\end{displaymath}
Thus, from \cite[Theorem 1, Appendix 2]{smit}, for $Re(s)>0$, the number of zeros of $det(\Lambda-Q^{T}-s I)$ are equal to the number of zeros of $det(R(s))=\prod_{i=1}^{N}(\lambda_{i}+q_{i}-s)$. Assume now that all $\nu_{i}$ are distinct.

Similarly, for $Re(\eta)\geq 0$, $Re(s)=0$, $det(\Lambda-Q^{T}+(\eta-s) I)\neq 0$ (thus, the inverse of $M^{T}(\eta-s)$ exists), and all the zeros of $det(\Lambda-Q^{T}+(\eta-s) I)$ lie in $Re(s)>0$. It is easy to realize that these zeros, say $\mu_{i}(\eta)$, are such that $\mu_{i}(\eta)=\nu_{i}+\eta$, $i=1,\ldots,N$, where $\nu_{i}$ the eigenvalues of $\Lambda-Q^{T}$. 
\end{proof}

Thus,
\begin{displaymath}
    (M^{T}(\eta-s))^{-1}=\frac{1}{\prod_{i=1}^{N}(s-\mu_{i}(\eta))}L(\eta-s),
\end{displaymath}
where $L(\eta-s):=cof(M^{T}(\eta-s))$ is the cofactor matrix of $M^{T}(\eta-s)$. 
\begin{remark}
    Note that $(M^{T}(\eta-s))^{-1}$ can also be written as (see \cite[equation (3.16)]{regte}):
\begin{displaymath}
    (M^{T}(\eta-s))^{-1}=R\ diag(\frac{1}{\mu_{1}(\eta)-s},\ldots,\frac{1}{\mu_{N}(\eta)-s})R^{-1},
\end{displaymath}
where $R$ is the matrix with the $i$th column, say $R_{i}$, $i=1,\ldots,N$, being the right eigenvector of $\Lambda-Q^{T}$ corresponding to the eigenvalue $\nu_{i}$.
\end{remark}

    Let $F(\eta,s):=\Lambda L(\eta-s)B^{*}(s)$, and $F_{i,j}(\eta,s)$ the $(i,j)$ element of $F(\eta,s)$, $i,j=1,\ldots,N$. Then, \eqref{eq2} can be written as:
    \begin{equation}
        \prod_{i=1}^{N}\left(s-\mu_{i}(\eta)\right) \left[Z_{j}^{w}(r,s,\eta)-re^{-s w}p_{j}\right] -r\sum_{k=1}^{N}Z_{k}^{w}(r,as,\eta)F_{k,j}(\eta,s)=\prod_{i=1}^{N}(s-\mu_{i}(\eta))V_{j}^{w}(r,s,\eta).\label{eq3s}
    \end{equation}
    Note that for $|r|<1$, $Re(\eta)\geq 0$, 
\begin{itemize}
\item The left-hand side of \eqref{eq3s} is analytic in $Re(s)>0$, continuous in $Re(s)\geq 0$, and it is also bounded.
    \item The right-hand side of \eqref{eq3s} is analytic in $Re(s)<0$, continuous in $Re(s)\leq 0$, and it is also bounded.
\end{itemize}
Thus, Liouville's theorem \cite[Th. 2.52]{tit}, implies that, in their respective half planes, both the left and the right hand side of \eqref{eq3s} can be rewritten as a polynomial of at most $N$th degree in $s$, dependent of $r$, $\eta$, for large $s$: For $|r|<1$, $Re(\eta)\geq 0$, $Re(s)\geq 0$:
\begin{equation}
        \prod_{i=1}^{N}(s-\mu_{i}(\eta)) \left[Z_{j}^{w}(r,s,\eta)-re^{-s w}p_{j}\right] -r\sum_{k=1}^{N}Z_{k}^{w}(r,as,\eta)F_{k,j}(\eta,s)=\sum_{l=0}^{N}s^{l}C_{l,j}^{w}(r,\eta).\label{eq3ss}
    \end{equation}
In matrix notation, \eqref{eq3ss} is rewritten as
\begin{equation}
   \prod_{i=1}^{N}(s-\mu_{i}(\eta)) \left[\tilde{Z}^{w}(r,s,\eta)-re^{-s w}\widehat{p}\right] -rF(\eta,s)\tilde{Z}^{w}(r,as,\eta)=\sum_{l=0}^{N}s^{l}C^{w}_{l}(r,\eta),\label{eq4}
\end{equation}
where $C^{w}_{l}(r,\eta):=(C^{w}_{l,1}(r,\eta),\ldots,C^{w}_{l,N}(r,\eta))^{T}$, $l=1,\ldots,N$.

    Note that for $s=0$, \eqref{eq3ss} (having in mind that $F_{i,j}(\eta,0)=A^{T}_{i,j}(\eta)$) yields
\begin{equation*}
    (-1)^{N}\prod_{i=1}^{N}\mu_{i}(\eta) \left[Z_{j}^{w}(r,0,\eta)-rp_{j}\right] -r\sum_{k=1}^{N}Z_{k}^{w}(r,0,\eta)A_{k,j}(\eta)=C^{w}_{0,j}(r,\eta).\label{al11}
\end{equation*}
However, from \eqref{eq3s}, for $s=0$, 
\begin{equation*}
    (-1)^{N}\prod_{i=1}^{N}\mu_{i}(\eta) \left[Z_{j}^{w}(r,0,\eta)-rp_{j}\right] -r\sum_{k=1}^{N}Z_{k}^{w}(r,0,\eta)A_{k,j}(\eta)=0,\label{sl2}
\end{equation*}
since $V^{w}_{j}(r,0,\eta)=0$, $j=1,\ldots,N$. Thus, $C_{0,j}^{w}(r,\eta)=0$, so that $C_{0}^{w}(r,\eta)=(C_{0,1}^{w}(r,\eta),\ldots,C_{0,N}^{w}(r,\eta))^{T}=\mathbf{0}$. By setting in \eqref{eq3ss} $s=\mu_{i}(\eta)$, $=1,\ldots,N$, we obtain, for $j=1,\ldots,N,$
\begin{equation}
    -r\sum_{k=1}^{N}Z_{k}^{w}(r,a\mu_{i}(\eta),\eta)F_{k,j}(\eta,\mu_{i}(\eta))=\sum_{l=1}^{N}(\mu_{i}(\eta))^{l}C_{l,j}^{w}(r,\eta).\label{eq3ssq}
\end{equation}
In matrix notation \eqref{eq3ssq} is rewritten as
\begin{equation}
    -rF(\eta,\mu_{i}(\eta))\tilde{Z}^{w}(r,a\mu_{i}(\eta),\eta)=\sum_{l=1}^{N}(\mu_{i}(\eta))^{l}C_{l}^{w}(r,\eta).\label{eqs}
\end{equation}

In matrix notation \eqref{eq3ss} is rewritten for $Re(s)\geq 0$, $Re(\eta)\geq 0$, $|r|<1$ as,
\begin{equation} 
    \tilde{Z}^{w}(r,s,\eta)=K(r,s,\eta)\tilde{Z}^{w}(r,as,\eta)+L^{w}(r,s,\eta),\label{basic}
\end{equation}
where
\begin{displaymath}
    \begin{array}{rl}
       K(r,s,\eta):=&r\Lambda (M^{T}(\eta-s))^{-1}B^{*}(s)=rA^{T}(\eta-s)B^{*}(s)=r\frac{F(\eta,s)}{\prod_{j=1}^{N}(s-\mu_{j}(\eta))}, \vspace{2mm}\\
        L^{w}(r,s,\eta):=  & re^{-s w}\widehat{p}+\frac{1}{\prod_{j=1}^{N}(s-\mu_{j}(\eta))}\sum_{l=1}^{N}s^{l}C_{l}^{w}(r,\eta).
    \end{array}
\end{displaymath}
Note that \eqref{basic} is the matrix analogue of the functional equation (1) in \cite{adan}. In the following, we show that by using a similar iterative approach we can solve it. Iteration of \eqref{basic} yields
\begin{equation}
    \tilde{Z}^{w}(r,s,\eta)=\sum_{n=0}^{\infty}\prod_{m=0}^{n-1}K(r,a^{m}s,\eta)L^{w}(r,a^{n}s,\eta)+\lim_{n\to\infty}r^{n}\prod_{m=0}^{n-1}A^{T}(\eta-a^{m}s)B^{*}(a^{m}s)\tilde{Z}^{w}(r,a^{n}s,\eta),\label{iter}
\end{equation}
where in the second term in \eqref{iter} we used the fact that $A^{T}(\eta-a^{m}s)=\Lambda(M^{T}(\eta-a^{m}s))^{-1}$. Note that the elements of the matrix $A^{T}(s)$, i.e., $A_{i,j}(s)$ are the LSTs of the interarrival times given that the background process make a transition from state $i$ to $j$ in that time interval. 

Note that as $m\to\infty$, $B^{*}(a^{m}s)\to I$, and $A^{T}(\eta-a^{m}s)\to A^{T}(\eta)=\Lambda(M^{T}(\eta))^{-1}$. Thus, $||B^{*}(a^{m}s)||\to 1$ as $m\to\infty$, and $||B^{*}(a^{m}s)||\leq 1$, for $Re(s)\geq 0$. Moreover, $||A^{T}(\eta-a^{m}s)||\to||A^{T}(\eta)||=\tau(\eta)<1$, as $m\to\infty$.  There exists an $L>0$, such that for $Re(\eta)>Re(sa^{m})$, $m\geq L$, $||A^{T}(\eta-a^{m}s)||\leq \tau_{m}(\eta)<\infty$. Therefore, for $Re(\eta)>Re(sa^{m})$, and $m\geq L$, we have:
\begin{displaymath}
    ||A^{T}(\eta-a^{m}s)B^{*}(a^{m}s)||\leq ||A^{T}(\eta-a^{m}s)||||B^{*}(a^{m}s)||\leq ||A^{T}(\eta-a^{m}s)|| =\tau_{m}(\eta),
\end{displaymath}
which implies that for $\tilde{\tau}(\eta)=max_{\{m=0,\ldots,n\}}\{\tau_{m}(\eta)\}$ there exists a constant $c_{s}<\infty$ such that $||\prod_{m=0}^{n}A^{T}(\eta-a^{m}s)B^{*}(a^{m}s)||\leq c_{s}\tilde{\tau}^{n}(\eta)$. Since $|r|<1$, \eqref{iter} gives,
\begin{equation}
    \tilde{Z}^{w}(r,s,\eta)=\sum_{n=0}^{\infty}\prod_{m=0}^{n-1}K(r,a^{m}s,\eta)L^{w}(r,a^{n}s,\eta).\label{solp}
\end{equation}
We still need to derive the remaining column vectors $C_{i}^{w}(r,\eta)$, $i=1,\ldots,N$. By setting $s=a\mu_{i}(\eta)$, $i=1,\ldots,N$, in \eqref{solp} we obtain:
\begin{equation}
    \tilde{Z}^{w}(r,a\mu_{i}(\eta),\eta)=\sum_{n=0}^{\infty}\prod_{m=0}^{n-1}K(r,a^{m+1}\mu_{i}(\eta),\eta)\left(re^{- a^{n+1}\mu_{i}(\eta)w}\widehat{p}+\frac{\sum_{l=1}^{N}(a^{n+1}\mu_{i}(\eta))^{l}C_{l}^{w}(r,\eta)}{\prod_{j=1}^{N}(\mu_{i}(\eta)a^{n+1}-\mu_{j}(\eta))}\right).\label{solp1}
\end{equation}
By substituting \eqref{solp1} in \eqref{eqs} we obtain a system of equations to obtain the remaining unknown vectors $C^{w}_{l}(r,\eta):=(C^{w}_{l,1}(r,\eta),\ldots,C^{w}_{l,N}(r,\eta))^{T}$, $l=1,\ldots,N$.
\begin{remark}
Note that in \eqref{solp}, $\tilde{Z}^{w}(r,s,\eta)$ appears to have singularities in $s=\mu_{j}(\eta)/a^{n}$, $n = 0,1,\ldots$, $j=1,\ldots,N$, $Re(\eta)\geq 0$, but
it can be seen that these are removable singularities. In the following, we show this for $s=\mu_{j}(\eta)$, $j=1,\ldots,N$. Note that \eqref{solp} is rewritten as
\begin{displaymath}
    \begin{array}{rl}
       \tilde{Z}^{w}(r,s,\eta)=  &\sum_{n=0}^{\infty}r^{n}\prod_{m=0}^{n-1}A^{T}(\eta-a^{m}s)B^{*}(a^{m}s)\left(re^{-s a^{n}w}\widehat{p}+\frac{\sum_{l=1}^{N}(a^{n}s)^{l}C_{l}^{w}(r,\eta)}{\prod_{j=1}^{N}(s a^{n}-\mu_{j}(\eta))}\right) \vspace{2mm}\\
         =& re^{-s w}\widehat{p}+\frac{\sum_{l=1}^{N}s^{l}C_{l}^{w}(r,\eta)}{\prod_{j=1}^{N}(s-\mu_{j}(\eta))}\vspace{2mm}\\&+rA^{T}(\eta-s)B^{*}(s)\sum_{n=1}^{\infty}r^{n-1}\prod_{m=1}^{n-1}A^{T}(\eta-a^{m}s)B^{*}(a^{m}s)\left(re^{-s a^{n}w}\widehat{p}+\frac{\sum_{l=1}^{N}(a^{n}s)^{l}C_{l}^{w}(r,\eta)}{\prod_{j=1}^{N}(s a^{n}-\mu_{j}(\eta))}\right)\vspace{2mm}\\
         =&re^{-s w}\widehat{p}+\frac{1}{\prod_{j=1}^{N}(s-\mu_{j}(\eta))}\left[\sum_{l=1}^{N}s^{l}C_{l}^{w}(r,\eta)+rF(\eta,s)\right.\vspace{2mm}\\
         &\left.\times\sum_{k=0}^{\infty}r^{k}\prod_{t=0}^{k-1}A^{T}(\eta-a^{t+1}s)B^{*}(a^{t+1}s)\left(re^{-s a^{k+1}w}\widehat{p}+\frac{\sum_{l=1}^{N}(a^{k+1}s)^{l}C_{l}^{w}(r,\eta)}{\prod_{j=1}^{N}(s a^{k+1}-\mu_{j}(\eta))}\right)\right]\vspace{2mm}\\
         =&re^{-s w}\widehat{p}+\frac{1}{\prod_{j=1}^{N}(s-\mu_{j}(\eta))} \left[\sum_{l=1}^{N}s^{l}C_{l}^{w}(r,\eta)+rF(\eta,s)\tilde{Z}^{w}(r,as,\eta)\right].
    \end{array}
\end{displaymath}

Note that the term in brackets in the last line vanishes for $s=\mu_{j}(\eta)$ (see \eqref{eq3ssq}). Thus, confirming that $s=\mu_{j}(\eta)$ is not a pole of $\tilde{Z}^{w}(r,s,\eta)$. Hence, the same holds for the expression of $\tilde{Z}^{w}(r,s,\eta)$ in \eqref{solp}. By using \eqref{eq4}, we can also show that $\tilde{Z}^{w}(r,s,\eta)$ has no singularity in $s=\mu_{j}(\eta)/a^{n}$, $n=1,2,\ldots$, $Re(\eta)\geq 0$, $j=1,\ldots,N$.
\end{remark}
\begin{remark}
    It seems that the analysis we followed above can be also applied whenever the matrix $K(r,s,\eta):=rG(\eta-s,s)$ ($G(s,\eta)$ is an $N\times N$ matrix with elements as given in \eqref{fun}), is such that $||G(\eta-a^{m}s,a^{m}s)||=\tau(\eta)<1$ for $Re(\eta)\geq 0$. Such a result is crucial in ensuring the convergence of the series and infinite products when we applied the iterating procedure. 
\end{remark}

\begin{remark}
An alternative way to derive the above result is given below. Note that, \eqref{eq2} can be rewritten for $|r|<1$, $Re(s)=0$, $Re(\eta)\geq 0$ as:
\begin{equation}
       \prod_{i=1}^{N}(s-\mu_{i}(\eta))[\tilde{Z}^{w}(r,s,\eta)-re^{-s w}\widehat{p}]-r\Lambda L(\eta-s)B^{*}(s)\tilde{Z}^{w}(r,as,\eta)=\prod_{i=1}^{N}(s-\mu_{i}(\eta))\tilde{V}^{w}(r,s,\eta).\label{eq3}
  \end{equation}
For $|r|<1$, $Re(\eta)\geq 0$, 
\begin{itemize}
\item The left-hand side of \eqref{eq3} is analytic in $Re(s)>0$, continuous in $Re(s)\geq 0$, and it is also bounded.
    \item The right-hand side of \eqref{eq3} is analytic in $Re(s)<0$, continuous in $Re(s)\leq 0$, and it is also bounded since $|E(e^{-s[a W_{n}+S_{n}-A_{n+1}]^{-}-\eta T_{n+1}}1_{\{Y_{n+1}=j\}}|W_{1}=w)|\leq 1$, $Re(s)\leq 0$, $Re(\eta)\geq 0$.
\end{itemize}
Thus, by analytic continuation we can define an entire function such that it is equal to the left-hand side of \eqref{eq3} for $Re(s)\geq 0$, and equal to the right-hand side of \eqref{eq3} for $Re(s)\leq 0$ (with $|r|<1$, $Re(\eta)\geq 0$). Hence, by (a variant of) Liouville's theorem \cite{paman} (for vector-valued functions; see also \cite[p. 81, Theorem 3.32]{rudin} or \cite[p. 232, Theorem 9.11.1]{die}, or \cite[p. 113, Theorem 3.12]{allan}) behaves as a polynomial of at most $N$th degree in $s$. Thus, for $Re(s)\geq 0$,
\begin{equation}
  \prod_{j=1}^{N}(s-\mu_{j}(\eta))[\tilde{Z}^{w}(r,s,\eta)-re^{-s w}\widehat{p}]-r\Lambda L(\eta-s)B^{*}(s)\tilde{Z}^{w}(r,as,\eta)=\sum_{l=0}^{N}s^{l}C^{w}_{l}(r,\eta),\label{eq41}
\end{equation}
where $C^{w}_{l}(r,\eta):=(C^{w}_{l,1}(r,\eta),\ldots,C^{w}_{l,N}(r,\eta))^{T}$, $l=0,1,\ldots,N$, column vectors still have to be determined. Note that for $s=0$, \eqref{eq41} yields
\begin{equation}
   (-1)^{N}\prod_{j=1}^{N}\mu_{j}(\eta)[(I-r\Lambda(M^{T}(\eta))^{-1})\tilde{Z}^{w}(r,0,\eta)-r\widehat{p}]=C^{w}_{0}(r,\eta).\label{l11}
\end{equation}
However, from \eqref{eq3}, for $s=0$, 
\begin{equation}
    (-1)^{N}\prod_{j=1}^{N}\mu_{i}(\eta)[(I-r\Lambda(M^{T}(\eta))^{-1})\tilde{Z}^{w}(r,0,\eta)-r\widehat{p}]=\mathbf{0},\label{l2}
\end{equation}
since $\tilde{V}(r,0,\eta)=\mathbf{0}$, where $\mathbf{0}$, is $N\times 1$ column vector with all components equal to 0. Thus, $C^{w}_{0}(r,\eta)=\mathbf{0}$. 
\end{remark}

\subsection{A more general dependence structure}\label{sub}
Consider now the case where,
\begin{equation}
    E\left(e^{-s A_{n+1}}1_{\{Y_{n+1}=j\}}|S_{n}=t,Y_{n}=i\right)=\chi_{i,j}(s)e^{-\psi_{i}(s)t},\label{dep1}
\end{equation}
thus, the interarrival time depend also on the length of the previous service time. More precisely, the interarrival time $A_{n+1}$ consists of two parts: a component that depends on the previous service time and the state of the Markov chain at time $n$, presented by the term $e^{-\psi_{i}(s)t}$, and a ``standard" interarrival time that depends only on the state at time $n$ and $n+1$, i.e., with Laplace-Stieltjes transform $\chi_{i,j}(s)$, which does not depend on the service time. From the form of \eqref{dep1}, some additional assumptions have been made explicitly. In particular since for $s=0$, the sum of the expectations $\sum_{j=1}^{N}E(1_{\{Y_{n+1}=j\}}|S_{n}=t,Y_{n}=i)=e^{-\psi_{i}(0)}\sum_{j=1}^{N}\chi_{i,j}(0)=1$, we must have $\psi_{i}(0)=0$, and $\sum_{j=1}^{N}\chi_{i,j}(0)=1$, $i=1,\ldots,N$. Then, the bivariate LST:
\begin{displaymath}
    E\left(e^{-s A_{n+1}-\eta S_{n}}1_{\{Y_{n+1}=j\}}|Y_{n}=i\right)=\int_{0}^{\infty}e^{-\eta t}\chi_{i,j}(s)e^{-\psi_{i}(s)t}dB_{i}(t)=\chi_{i,j}(s)\beta_{i}^{*}(\eta+\psi_{i}(s)),
\end{displaymath}
with $Re(\eta+\psi_{i}(s))>0$. In order to derive explicit results (in terms of LST), we further assume that the functions $\chi_{i,j}(.)$, $\psi_{i}(.)$ are rational, i.e.,
\begin{displaymath}
    \chi_{i,j}(s):=\frac{P^{(1)}_{i,j}(s)}{\prod_{m=1}^{L_{i,j}}(s-\lambda^{(m)}_{i,j})},\,\psi_{i}(s):=\frac{P^{(2)}_{i}(s)}{\prod_{m=1}^{M_{i}}(s-\nu^{(m)}_{i})},\,i,j=1,\ldots,N,
\end{displaymath}
where $P_{i,j}^{(1)}(.)$ a polynomial of degree at
most $L_{i,j}-1$, not sharing the same zeros with the denominator of $\chi_{i,j}(s)$, and similarly, $P_{i}^{(2)}(.)$ polynomial of degree at most $M_{i}-1$, not sharing the same zeros with the
denominator of $\psi_{i}(s)$, for $i = 1,\ldots, N$. For convenience, assume that $Re(\nu_{i}^{(m)})<0$, $i=1,\ldots,N$, $m=1,\ldots,M_{i}$. 

Then, by repeating the steps in the proof of Theorem \ref{thr} we have:
\begin{theorem}\label{thr1}
    For $Re(s)=0$, $Re(\eta)\geq 0$, $|r|<1$, $j=1,2,\ldots,N$,
    \begin{equation}
        Z^{w}_{j}(r,s,\eta)-rp_{j}e^{-s w}=r\sum_{i=1}^{N}Z^{w}_{i}(r,as,\eta)\beta_{i}^{*}(s+\psi_{i}(\eta-s))\chi_{i,j}(\eta-s)+V_{j}^{w}(r,s,\eta),
        \label{eqx1z}
    \end{equation}
    or equivalently, in matrix notation,
    \begin{equation}
        \tilde{Z}^{w}(r,s,\eta)-rX(\eta-s)B^{*}(s,\eta)\tilde{Z}^{w}(r,as,\eta)=re^{-s w}\widehat{p}+\tilde{V}^{w}(r,s,\eta),\label{eq21}
    \end{equation}
    where $X(s)$ be an $N\times N$ matrix with $(i,j)$ element equal to $\chi_{i,j}(s)$, and $B^{*}(s,\eta)=diag(\beta_{1}^{*}(s+\psi_{1}(\eta-s)),\ldots,\beta_{N}^{*}(s+\psi_{N}(\eta-s)))$.
\end{theorem}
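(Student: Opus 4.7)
The plan is to mirror the proof of Theorem \ref{thr}, with the only essential modification being in the evaluation of the conditional expectation that involves the joint transform of $(S_n,A_{n+1})$, since now $A_{n+1}$ depends on $S_n$ through the term $e^{-\psi_i(\phi)}$.

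First, I would apply exactly the same splitting identity as before, namely $e^{-\phi x^+}+e^{-\phi x^-}=e^{-\phi x}+1$, to the quantity $E(e^{-\phi W_{n+1}-\eta T_{n+1}}\mathbf{1}(Y_{n+1}=j)\mid W_1=w)$ with $x=aW_n+S_n-A_{n+1}$. This splits the expression into an ``unrestricted'' part (without the $[\cdot]^+$) plus a contribution that, after multiplying by $r^{n+1}$ and summing, yields precisely $V_j^w(r,-\phi,\eta)$ on the right hand side.

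The only computation that changes is the evaluation of the unrestricted part. Writing $T_{n+1}=T_n+A_{n+1}$ and conditioning on $(S_n,Y_n)$, I would compute
\begin{displaymath}
\begin{array}{rl}
E(e^{-\phi[aW_n+S_n-A_{n+1}]-\eta T_{n+1}}\mathbf{1}(Y_{n+1}=j)\mid W_1=w) =&E(e^{-a\phi W_n-\eta T_n}e^{-\phi S_n-(\eta-\phi)A_{n+1}}\mathbf{1}(Y_{n+1}=j)\mid W_1=w)\\
=& \sum_{i=1}^{N}E(e^{-a\phi W_n-\eta T_n}\mathbf{1}(Y_n=i)\mid W_1=w)\,\Xi_{i,j}(\phi,\eta),
\end{array}
\end{displaymath}
where, using \eqref{dep1} and the tower property,
\begin{displaymath}
\Xi_{i,j}(\phi,\eta):=E\bigl(e^{-\phi S_n}\,E(e^{-(\eta-\phi)A_{n+1}}\mathbf{1}(Y_{n+1}=j)\mid S_n,Y_n=i)\,\big|\,Y_n=i\bigr)=\chi_{i,j}(\eta-\phi)\,\beta_i^{*}(\phi+\psi_i(\eta-\phi)).
\end{displaymath}
Here I used that, conditional on $S_n=t$ and $Y_n=i$, $E(e^{-(\eta-\phi)A_{n+1}}\mathbf{1}(Y_{n+1}=j))=\chi_{i,j}(\eta-\phi)e^{-t\psi_i(\eta-\phi)}$ by \eqref{dep1}, so the inner integration against $dB_i(t)$ produces $\beta_i^{*}(\phi+\psi_i(\eta-\phi))$.

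Multiplying by $r^{n+1}$, summing $n\geq 1$, and identifying the generating functions $Z_i^w(r,a\phi,\eta)$ and $Z_j^w(r,\phi,\eta)$, and separately noting that the $n=1$ boundary term contributes $re^{-\phi w}p_j$ since $T_1=0$ and $W_1=w$, yields \eqref{eqx1z} component-wise. Casting the result in matrix notation, with $X(\phi)=(\chi_{i,j}(\phi))_{i,j}$ and $B^{*}(\phi,\eta)=\mathrm{diag}(\beta_i^{*}(\phi+\psi_i(\eta-\phi)))$, produces \eqref{eq21}. The analytical assumption $Re(\nu_i^{(m)})<0$ together with $Re(\phi)=0$, $Re(\eta)\geq 0$ guarantees that $Re(\eta+\psi_i(\eta-\phi))\geq 0$ where needed, so all LSTs are well defined. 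The main (and essentially only nontrivial) obstacle is this conditional-expectation computation; once it is in place, the rest of the argument is a verbatim repetition of the proof of Theorem \ref{thr}.
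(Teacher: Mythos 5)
Your proposal is correct and follows exactly the same route the paper takes: the paper simply asserts Theorem \ref{thr1} ``by repeating the steps in the proof of Theorem \ref{thr},'' and your write-up is precisely that repetition, with the single nontrivial modification being the conditional expectation $\Xi_{i,j}(\phi,\eta)=\chi_{i,j}(\eta-\phi)\beta_i^{*}(\phi+\psi_i(\eta-\phi))$ obtained via the tower property from \eqref{dep1}. You also correctly read the paper's \eqref{dep1} as $\chi_{i,j}(\phi)e^{-t\psi_i(\phi)}$ (the $t$ is missing in the paper's display but is clearly intended, as the paper's own subsequent integration against $dB_i(t)$ shows), so your computation is the right one.
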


Note that \eqref{eqx1z} is rewritten for $j=1,\ldots,N,$ as:
\begin{equation}
    \begin{array}{l}
       \prod_{i=1}^{N}\prod_{m=1}^{L_{i,j}}(s-\lambda_{i,j}^{(m)}) \left[Z^{w}_{j}(r,s,\eta)-rp_{j}e^{-s w}\right] \vspace{2mm}\\-r\sum_{i=1}^{N}Z^{w}_{i}(r,as,\eta)P^{(1)}_{i,j}(s)\prod_{k\neq i}\prod_{m=1}^{L_{k,j}}(s-\lambda_{k,j}^{(m)})\beta_{i}^{*}(s+\psi_{i}(\eta-s))=\prod_{i=1}^{N}\prod_{m=1}^{L_{i,j}}(s-\lambda_{i,j}^{(m)})V_{j}^{w}(r,-s,\eta).
    \end{array}\label{hju}
\end{equation}
Note that for $|r|<1$, $Re(\eta)\geq 0$, 
\begin{itemize}
\item The left-hand side of \eqref{hju} is analytic in $Re(s)>0$, continuous in $Re(s)\geq 0$, and it is also bounded.
    \item The right-hand side of \eqref{hju} is analytic in $Re(s)<0$, continuous in $Re(s)\leq 0$, and it is also bounded.
\end{itemize}
Thus, by Liouville's theorem \cite[Th. 2.52]{tit}, for $Re(s)\geq 0$,
\begin{equation}
    \begin{array}{l}
       \prod_{i=1}^{N}\prod_{m=1}^{L_{i,j}}(s-\lambda_{i,j}^{(m)})\left[Z^{w}_{j}(r,s,\eta)-rp_{j}e^{-s w}\right] \vspace{2mm}\\-r\sum_{i=1}^{N}Z^{w}_{i}(r,as,\eta)P^{(1)}_{i,j}(s)\prod_{k\neq i}\prod_{m=1}^{L_{k,j}}(s-\lambda_{k,j}^{(m)})\beta_{i}^{*}(s+\psi_{i}(\eta-s))=\sum_{l=0}^{\sum_{k=1}^{N}L_{k,j}}C_{l,j}^{w}(r,\eta)s^{l}.
    \end{array}\label{hjuc}
\end{equation}
Using similar arguments as above, $C_{l}^{w}(r,\eta)=\mathbf{0}$. In matrix notation \eqref{hjuc} has for $Re(s)\geq 0$, $Re(\eta)\geq 0$, $|r|<1$, the same form as \eqref{basic},
where now
\begin{displaymath}
    \begin{array}{rl}
       K(r,s,\eta):=&rX(\eta-s)B^{*}(s,\eta), \vspace{2mm}\\
        L^{w}(r,s,\eta):=  & re^{-s w}\widehat{p}+\frac{1}{\prod_{i=1}^{N}\prod_{m=1}^{L_{i,j}}(s-\lambda_{i,j}^{(m)})}\tilde{g}(r,s,\eta),
    \end{array}
\end{displaymath}
where $\tilde{g}(r,s,\eta)=(\sum_{l=1}^{\sum_{k=1}^{N}L_{k,1}}C_{l,1}^{w}(r,\eta)s^{l},\ldots,\sum_{l=1}^{\sum_{k=1}^{N}L_{k,N}}C_{l,N}^{w}(r,\eta)s^{l})^{T}$. Iteration of the resulting equation yields:
\begin{equation}
    \tilde{Z}^{w}(r,s,\eta)=\sum_{n=0}^{\infty}\prod_{m=0}^{n-1}K(r,a^{m}s,\eta)L^{w}(r,a^{n}s,\eta)+\lim_{n\to\infty}r^{n}\prod_{m=0}^{n-1}X(\eta-a^{m}s)B^{*}(a^{m}s,\eta)\tilde{Z}^{w}(r,a^{n}s,\eta).\label{iterm}
\end{equation}
Note that as $m\to\infty$, $B^{*}(a^{m}s,\eta)\to B^{*}(0,\eta)=diag(\beta_{1}^{*}(\psi_{1}(\eta)),\ldots,\beta_{N}^{*}(\psi_{N}(\eta)))$. Thus, for $Re(\eta)>0$, and $m\to\infty$, $||B^{*}(a^{m}s,\eta)||\to||B^{*}(0,\eta)||=b(\eta)<1$. Moreover, $X(\eta-a^{m}s)\to X(\eta)$, where the $(i,j)$ elements of $X(\eta)$ is an LST of the distribution of a non-negative random variable. Thus, for $Re(\eta)>0$ and $m\to\infty$ $||X(\eta-a^{m}s)||\to ||X(\eta)||=y(\eta)<1$. Thus, there exists a $J>0$, such that for $m\geq J$, $Re(\eta)>0$, $||B^{*}(a^{m}s,\eta)||\leq \tilde{B}_{m}(\eta)<\infty$. Moreover, there exists a $K>0$, such that for $m\geq K$, $Re(\eta)>Re(a^{m}s)$, $||X(\eta-a^{m}s)||\leq \tilde{X}_{m}(\eta)<\infty$. If $\tilde{b}(\eta)=\max_{\{m=0,1,\ldots,N\}}\{\tilde{B}_{m}(\eta)\}$, $\tilde{x}(\eta)=\max_{\{m=0,1,\ldots,N\}}\{\tilde{X}_{m}(\eta)\}$, then, $||X(\eta-a^{m}s)B^{*}(a^{m}s,\eta)||\leq ||X(\eta-a^{m}s)||||B^{*}(a^{m}s,\eta)||\leq \tilde{x}(\eta)\tilde{b}(\eta)$. Therefore, there exists a constant $h_{s}$, such that $\prod_{m=0}^{n}X(\eta-a^{m}s)B^{*}(a^{m}s,\eta)\leq h_{s}[\tilde{x}(\eta)\tilde{b}(\eta)]^{n}$. Thus, \eqref{iterm} yields,
\begin{equation}
    \tilde{Z}^{w}(r,s,\eta)=\sum_{n=0}^{\infty}\prod_{m=0}^{n-1}K(r,a^{m}s,\eta)L^{w}(r,a^{n}s,\eta).\label{oi}
\end{equation}
The remaining coefficients $C_{l,j}^{w}(r,\eta)$, $l=1,\ldots,\sum_{k=1}^{N}L_{k,j}$, $j=1,\ldots,N,$ are derived similarly as above and further details are omitted.

\section{Two Markov-dependent models with a similar solution method}\label{related}
In this section, we present two models the analysis of which leads to a similar vector-valued fixed point functional equation  for the LST of the workload. More precisely, we focus on a modulated shot-noise queue with additional positive and negative jumps, as well as on a modulated single server queue with service time that is randomly dependent on the waiting time. 
\subsection{On a modulated shot-noise queue}\label{shot}
In this section, we investigate the workload at arrival instants of a modulated $D/G/1$ shot-noise queue, i.e., a queue where the server's speed is workload proportional: when the workload is $x$, the server's speed equals $rx$; see \cite{shots} for a recent survey on shot-noise queueing systems. Our system operates as follows: Assume that the interarrival times $A_{1},A_{2},\ldots$ of customers equal $t>0$, i.e., $P(A_{n}=t)=1$. There is a single server, and service requirements of successive customers $S_{1},S_{2},\ldots$ are i.i.d. random variables. We assume that just before the arrival of the nth customer, additional amount of work equal to
$C_n$ is added. This can be explained as random noise caused by the arrival, and may be positive or negative. Let
\begin{displaymath}
    C_{n}=\left\{\begin{array}{ll}
    C_{n}^{+},&\text{ with probability }p, \\
     -C_{n}^{-},&\text{ with probability }q=1-p. 
\end{array}\right.
\end{displaymath}
Assume now that for $n\geq 0$, $x,y\geq 0$, $i,j=1,\ldots,N$:
\begin{equation*}
    \begin{array}{l}
         P(C_{n+1}\leq x,S_{n}\leq y,Y_{n+1}=j|Y_{n}=i,(C_{r+1},S_{r},Y_{r}),r=0,1,\ldots,n-1) \vspace{2mm}\\
        =  P(C_{n+1}\leq x,S_{n}\leq y,Y_{n+1}=j|Y_{n}=i)=p_{i,j}F_{S,i}(y)G_{C,j}(x),
    \end{array}
\end{equation*}
where $C^{+}|j$ has a general distribution with LST $c_{j}^{*}(s)$, and $C^{-}|j\sim\exp(\nu_{j})$, $j=1,\ldots,N$. Then, if $W_{n}$ is the workload before the $n$th arrival, we are dealing with a modulated stochastic recursion of the form $W_{n+1}=[e^{-rA_{n+1}}(W_{n}+S_{n})+C_{n+1}]^{+}$. Then, for $j=1,\ldots,N,$
\begin{displaymath}
    \begin{array}{rl}
        Z_{j}^{n+1}(s)= &E\left(e^{-sW_{n+1}}1_{\{Y_{n+1}=j\}}\right)=\sum_{i=1}^{N}P(Y_{n}=i) p_{i,j}E\left(e^{-sW_{n+1}}|Y_{n+1}=j,Y_{n}=i\right) \vspace{2mm}\\
         =& \sum_{i=1}^{N}P(Y_{n}=i)p_{i,j} \left( pE\left(e^{-s(e^{-rt}(W_{n}+S_{n})+C^{+}_{n+1})}|Y_{n+1}=j,Y_{n}=i\right)\right.\vspace{2mm}\\
         &\left.+qE\left(e^{-s[e^{-rt}(W_{n}+S_{n})-C^{-}_{n+1}]^{+}}|Y_{n+1}=j,Y_{n}=i\right)\right)\vspace{2mm} \\
         =&\sum_{i=1}^{N}p_{i,j} \left[ pc_{j}^{*}(-s)\beta_{i}^{*}(se^{-rt})Z_{i}^{n}(se^{-rt})\right.\vspace{2mm}\\
         &\left.+qP(Y_{n}=i)E\left(\int_{y=0}^{e^{-rt}(W_{n}+S_{n})}e^{-s(e^{-rt}(W_{n}+S_{n})-y)}\nu_{j}e^{-\nu_{j}y}dy+\int_{y=e^{-rt}(W_{n}+S_{n})}^{\infty}\nu_{j}e^{-\nu_{j}y}dy|Y_{n}=i\right) \right]\vspace{2mm}\\
         =&\sum_{i=1}^{N}p_{i,j} \left[ pc_{j}^{*}(s)\beta_{i}^{*}(se^{-rt})Z_{i}^{n}(se^{-rt})\right.\vspace{2mm}\\
         &\left.+qP(Y_{n}=i)E\left(\frac{\nu_{j}}{\nu_{j}-s}e^{-se^{-rt}(W_{n}+S_{n})}-\frac{s}{\nu_{j}-s}e^{-\nu_{j}e^{-rt}(W_{n}+S_{n})}|Y_{n}=i\right)\right]\vspace{2mm}\\
         =&\left(pc_{j}^{*}(s)+q\frac{\nu_{j}}{\nu_{j}-s}\right)\sum_{i=1}^{N}p_{i,j}\beta_{i}^{*}(se^{-rt})Z_{i}^{n}(se^{-rt})-\frac{sq}{\nu_{j}-s}\sum_{i=1}^{N}p_{i,j}\beta_{i}^{*}(\nu_{j}e^{-rt})Z_{i}^{n}(\nu_{j}e^{-rt}).
    \end{array}
\end{displaymath}
Letting $n\to\infty$, so that $Z_{j}^{n}(s)\to Z_{j}(s)$ we have
\begin{equation}
    Z_{j}(s)=\left(pc_{j}^{*}(s)+q\frac{\nu_{j}}{\nu_{j}-s}\right)\sum_{i=1}^{N}p_{i,j}\beta_{i}^{*}(se^{-rt})Z_{i}(se^{-rt})-\frac{sq}{\nu_{j}-s}\sum_{i=1}^{N}p_{i,j}\beta_{i}^{*}(\nu_{j}e^{-rt})Z_{i}(\nu_{j}e^{-rt}).\label{ol}
\end{equation}
In matrix notation, \eqref{ol} is written as
\begin{equation}
    \tilde{Z}(s)=\tilde{C}(s)P^{T}B^{*}(se^{-rt})\tilde{Z}(se^{-rt})+\tilde{Q}(s),\label{ol1}
\end{equation}
where $\tilde{C}(s):=pC(s)+q\widehat{L}(s)$, $C(s):=diag(c_{1}^{*}(s),\ldots,c_{N}^{*}(s))$, $\widehat{L}(s):=diag(\frac{\nu_{1}}{\nu_{1}-s},\ldots,\frac{\nu_{N}}{\nu_{N}-s})$, $\tilde{Q}(s):=q(I-\tilde{L}(s))\tilde{r}$, $\tilde{r}=(r_{1},\ldots,r_{N})$, with $r_{j}=\sum_{i=1}^{N}p_{i,j}\beta_{i}^{*}(\nu_{j}e^{-rt})Z_{i}^{n}(\nu_{j}e^{-rt})$.

Note that \eqref{ol1} is of the form
\begin{equation}
        \tilde{Z}(s)=H(\zeta(s))\tilde{Z}(\zeta(s))+\tilde{Q}(s),\label{hui}
    \end{equation}
where $\zeta(s)=se^{-rt}$, $H(\zeta(s)):=\tilde{C}(s)P^{T}B^{*}(se^{-rt})$. Moreover, the $n$th iterative of $\zeta(s)$, i.e., $\zeta^{(n)}(s)=\zeta(\zeta(\ldots\zeta(s)\ldots))=se^{-rnt}\to 0$ as $n\to \infty$. Moreover, \eqref{hui} is slightly different from \eqref{aqw}, since the mapping $\zeta(s)$ is also on the coefficient matrix $H(.)$. However, $H(e^{-rtm}s)\to P^{T}$ as $m\to\infty$. Thus, we can apply Lemma \ref{lemma1} to study the convergence of the iterating procedure. Therefore, by iterating \eqref{hui}, and applying appropriately Lemma \ref{lemma1} we obtain:
\begin{equation}
    \tilde{Z}(s)=\sum_{n=0}^{\infty}\prod_{d=0}^{n-1}H(e^{-rtd}s)\Tilde{Q}(e^{-rnt}s)+\prod_{n=0}^{\infty}H(e^{-rtn}s)\tilde{\pi}.\label{axz2}
\end{equation}
The unknown vector $\tilde{r}$ can be derived similarly by following the steps in Proposition \ref{prop1}, so further details are omitted.
\begin{remark}
    The modulated $D/G/1$ shot-noise queue seems to be the only shot-noise queueing model that may lead to a functional equation for $\tilde{Z}(s)$ that can be treated in the way we did in this work. In particular, by taking $a=e^{-rt}$, the resulting recursion is $\tilde{Z}(s)=H(as)\tilde{Z}(as)+\tilde{Q}(s)$, and can be iteratively solved. The M/G/1 case is tractable, but the analysis seems to result in a differential equation for $\tilde{Z}(s)$, rather than in a functional equation of the type treated in this work. Any other interarrival time distribution seems to lead to
a challenging functional equation for $\tilde{Z}(s)$.
\end{remark}
\begin{remark}
    Note that the modulated recursion $W_{n+1}=[e^{-rA_{n+1}}(W_{n}+B_{n})+C_{n+1}]^{+}$, is a special case of the general modulated recursion of the form $W_{n+1}=[J_{n}(W_{n})+L_{n+1}]^{+}$, where $L_{n}$ maybe the difference of two positive random variables, and $J_{n}(t)$ a sequence of i.i.d. subordinators with $E(J_{n}(1))<1$, $n=1,2,\ldots$, i.e., $J_{n}(t)$ corresponds to a Levy thinning of $W_{n}$. The scalar version of this kind of processes was investigated recently in \cite{box2}. We plan to thoroughly investigate the vector-valued version of this multiplicative Lindley recursion in a future work. 
\end{remark}
\subsection{On a modulated single-server queue with service time randomly dependent on
the waiting time}\label{deps}
Consider now the following modulated version of a variant of a M/M/1 queue that was investigated in \cite[Section 5]{boxman}. For reasons that will become clear in the following (see Reamrk \ref{rem25}) assume that for $n\geq 0$, $x,y\geq 0$, $i,j=1,\ldots,N$:
\begin{equation*}
    \begin{array}{l}
         P(A_{n+1}\leq x,S_{n}\leq y,Y_{n+1}=j|Y_{n}=i,A_{2},\ldots,A_{n},S_{1},\ldots,S_{n-1},Y_{1},\ldots,Y_{n-1}) \vspace{2mm}\\
        =  P(A_{n+1}\leq x,S_{n}\leq y,Y_{n+1}=j|Y_{n}=i)=p_{i,j}(1-e^{-\mu y})(1-e^{-\lambda_{j}x}).
    \end{array}
\end{equation*}
Assume also that if the waiting time of the $n$th arriving
customer equals $W_n$, then her service time equals $[S_{n}-cW_{n}]^{+}$, where $c>0$. Using similar arguments as above:
\begin{displaymath}
    \begin{array}{rl}
        Z_{j}^{n+1}(s)= &E\left(e^{-sW_{n+1}}1_{\{Y_{n+1}=j\}}\right)= \sum_{i=1}^{N}P(Y_{n}=i)p_{i,j}E\left(e^{-s[W_{n}+[S_{n}-cW_{n}]^{+}-A_{n+1}]^{+}}|Y_{n+1}=j,Y_{n}=i\right)\vspace{2mm}\\
        =&\sum_{i=1}^{N}P(Y_{n}=i)p_{i,j}\left[E\left(\int_{0}^{W_{n}+[S_{n}-cW_{n}]^{+}}e^{-s(W_{n}+[S_{n}-cW_{n}]^{+}-y)}\lambda_{j}e^{-\lambda_{j}y}dy|Y_{n}=i\right)\right.\vspace{2mm}\\&\left.+E\left(\int_{W_{n}+[S_{n}-cW_{n}]^{+}}^{\infty}\lambda_{j}e^{-\lambda_{j}y}dy|Y_{n}=i\right)\right]\vspace{2mm}\\
         =&\sum_{i=1}^{N}P(Y_{n}=i)p_{i,j}E\left(\frac{\lambda_{j}e^{-s(W_{n}+[S_{n}-cW_{n}]^{+})}-se^{-\lambda_{j}(W_{n}+[S_{n}-cW_{n}]^{+})}}{\lambda_{j}-s}|Y_{n}=i\right)\vspace{2mm}\\
=&\sum_{i=1}^{N}P(Y_{n}=i)p_{i,j}E\left(\frac{\lambda_{j}}{\lambda_{j-s}}\left(\int_{cW_{n}}^{\infty}e^{-s(\bar{c}W_{n}+x)}\mu e^{-\mu x}dx+\int_{0}^{cW_{n}}e^{-sW_{n}}\mu e^{-\mu x}dx\right)\right.\vspace{2mm}\\
&\left.-\frac{s}{\lambda_{j}-s}\left(\int_{cW_{n}}^{\infty}e^{-\lambda_{j}(\bar{c}W_{n}+x)}\mu e^{-\mu x}dx+\int_{0}^{cW_{n}}e^{-\lambda_{j}W_{n}}\mu e^{-\mu x}dx\right)|Y_{n}=i\right)\vspace{2mm}\\

=&\sum_{i=1}^{N}P(Y_{n}=i)p_{i,j}E\left(\frac{\lambda_{j}}{\lambda_{j}-s}\left(e^{-sW_{n}}-\frac{s}{\mu+s}e^{-(s+\mu c)W_{n}}\right)-\frac{s}{\lambda_{j}-s}\left(e^{-\lambda_{j}W_{n}}-\frac{\lambda_{j}}{\mu+\lambda_{j}}e^{-(\lambda_{j}+\mu c)W_{n}}\right)|Y_{n}=i\right)\vspace{2mm}\\
=&\frac{\lambda_{j}}{\lambda_{j}-s}\sum_{i=1}^{N}p_{i,j}\left(Z_{i}^{n}(s)-\frac{s}{\mu+s}Z_{i}^{n}(s+\mu c)\right)-\frac{s}{\lambda_{j}-s}\sum_{i=1}^{N}p_{i,j}\left(Z_{i}^{n}(\lambda_{j})-\frac{\lambda_{j}}{\mu+\lambda_{j}}Z_{i}^{n}(\lambda_{j}+\mu c)\right).
    \end{array}
\end{displaymath}
As $n\to\infty$, $Z^{n}_{i}(s)\to Z_{i}(s)$, we have,
\begin{equation*}
    Z_{j}(s)=\frac{\lambda_{j}}{\lambda_{j}-s}\sum_{i=1}^{N}p_{i,j}\left[Z_{i}(s)-\frac{s}{\mu+s}Z_{i}(s+\mu c)\right]-\frac{s}{\lambda_{j}-s}\sum_{i=1}^{N}p_{i,j}\left[Z_{i}(\lambda_{j})-\frac{\lambda_{j}}{\mu+\lambda_{j}}Z_{i}(\lambda_{j}+\mu c)\right],
\end{equation*}
or equivalently,
\begin{equation}
\lambda_{j}\sum_{i=1}^{N}p_{i,j}Z_{i}(s)+(s-\lambda_{j})Z_{j}(s)-s\frac{\lambda_{j}}{\mu+s}\sum_{i=1}^{N}p_{i,j}Z_{i}(s+\mu c)=sv_{j}.
    \label{kop}
\end{equation}
Note that for $s=0$, \eqref{kop} yields $Z_{j}(0)=\sum_{i=1}^{N}p_{i,j}Z_{i}(0)$, thus, $\tilde{Z}(0)=\tilde{\pi}$. In matrix terms, \eqref{kop} is rewritten as:
\begin{equation}
    D(s)\tilde{Z}(s)=s\tilde{v}+\frac{s}{\mu+s}\Lambda P^{T}\tilde{Z}(s+\mu c),
    \label{kop1}
\end{equation}
where $D(s)=sI-\Lambda(I-P^{T})$, $\tilde{v}=(v_{1},\ldots,v_{N})^{T}$, $v_{j}=\sum_{i=1}^{N}p_{i,j}\left[Z_{i}(\lambda_{j})-\frac{\lambda_{j}}{\mu+\lambda_{j}}Z_{i}(\lambda_{j}+\mu c)\right]$, $j=1,\ldots,N$. It is easy to realize that $v_{j}=P(W=0;1_{\{Y_{n+1}=j\}})$.
\begin{lemma}
    The matrix $\Lambda(I-P^{T})$ has exactly $N$ eigenvalues $\gamma_{i}$, $i=1,\ldots,N$, with $\gamma_{1}=0$, and $Re(\gamma_{i})>0$, $i=2,\ldots,N$. 
\end{lemma}
\begin{proof}
    Clearly, $s:=\gamma_{1}=0$ is a root of $det(D(s)=0)$, since $P$ is a stochastic matrix. By applying Gersgorin's circle theorem \cite[Th. 1, Section 10.6]{lanc}, every eigenvalue of $\Lambda(I-P^{T})$ lies in at least one of the disks
    \begin{displaymath}
        \{s:|s-\lambda_{i}(1-p_{i,i})|\leq \sum_{k\neq i}|\lambda_{i}p_{k,i}|=\lambda_{i}\sum_{k\neq i}p_{k,i}\}.
    \end{displaymath}
    Therefore, for each $i$, the real part of $\gamma_{i}$ is positive.
\end{proof}

Let 
\begin{displaymath}
    \zeta_{s}:=\{s:Re(s)\geq 0, Det(D(s))\neq 0\}.
\end{displaymath}
Then, for $s\in \zeta_{s}$,
\begin{equation}
    \tilde{Z}(s)=A(s)\tilde{v}+M(s)\tilde{Z}(s+\mu c),\label{cz}
\end{equation}
where $A(s):=sD^{-1}(s)$, $M(s):=\frac{1}{\mu+s}A(s)\Lambda P^{T}$. Iterating \eqref{cz} $K$ times we derive
\begin{equation}
    \tilde{Z}(s)=\sum_{k=0}^{K}\prod_{l=0}^{k-1}M(s+l\mu c)A(s+k\mu c)\tilde{v}+\prod_{l=0}^{K}M(s+l\mu c)\tilde{Z}(s+(K+1)\mu c),\,s\in\widehat{\zeta}_{s},\label{cz1}
\end{equation}
with $\widehat{\zeta}_{s}:=\{s:Re(s)\geq 0, s+l\mu c\in\zeta_{s},l=0,1,\ldots\}.$ Note that \eqref{cz} has the same form as in \cite[eq. (5.5)]{combe}, and it is critical to prove the convergence of \eqref{cz1} as $K\to\infty$. By appropriately modifying Lemma \ref{lemma1}, or by appropriately using \cite[Lemma 5.2.1]{combe} we can ensure the convergence as $K\to\infty$, and obtaining,
\begin{equation}
    \tilde{Z}(s)=\sum_{k=0}^{\infty}\prod_{l=0}^{k-1}M(s+l\mu c)A(s+k\mu c)\tilde{v}:=T(s)\tilde{v}.\label{no}
\end{equation}

Note that $D(s)$ is singular at the eigenvalues of $\Lambda(I-P^{T})$, i.e., $det(D(s))=0$ at $s=\gamma_{i}$, $i=1,\ldots,N$. However, $\tilde{Z}(s)$ is analytic in the half-plane $Re(s)\geq 0$, and thus, the vector $\tilde{v}$  will be derived such that the right hand side of \eqref{no} is finite at $s=\gamma_{i}$, $i=1,\ldots,N$. Divide \eqref{kop1} with $s$ and denote by $\tilde{y}_{i}$, the left (row) eigenvector of $\Lambda(I-P^{T})$, associated with the eigenvalue $\gamma_{i}$, $i=1,\ldots,N$. Then, \eqref{kop1} is written as
\begin{equation}
    \tilde{y}_{i}(1-\frac{\gamma_{i}}{s})\tilde{Z}(s)=\tilde{v}-\frac{1}{\mu+s}\tilde{y}_{i}\Lambda P^{T}\tilde{Z}(s+\mu c),\,i=1,\ldots,N,\,Re(s)\geq 0.\label{ftu}
\end{equation}

Letting $s=\gamma_{i}$, $i=1,\ldots,N$, and using \eqref{no}, we obtain $N$ equations for the derivation of the $N$ elements of $\tilde{v}$:
\begin{equation}
    \tilde{y}_{i}\tilde{v}=1_{\{i=1\}}+\frac{1}{\mu+\gamma_{i}}\tilde{y}_{i}\Lambda P^{T}T(\mu c+\gamma_{i})\tilde{v},\,i=1,\ldots,N.
\end{equation}
%
%
\begin{remark}
    Note that for $c=0$, \eqref{kop} reduces to \cite[eq. (7)]{adan2}, in which we now have set $\tilde{G}_{i}(s)=\frac{\mu}{\mu+s}$, $i=1,\ldots,N$.
\end{remark}
\begin{remark}\label{rem25}
    Note that in case we allow the service time to depend on $Y_{n+1}=j$, then the functional equation would be much more complicated than the one in \eqref{cz}. In particular, we will have to cope with a functional equation of the following form:
    \begin{equation}
    \tilde{Z}(s)=A(s)\tilde{v}+G(s)\sum_{i=1}^{N}H^{(i)}(s)\tilde{Z}(s+\mu_{i} c),\label{czc1}
\end{equation}
where $G(s):=D^{-1}(s)\Lambda P^{T}$, and $H^{(i)}(s)$ is an $N\times N$ matrix with the $(i,i)$ element equal to $\frac{1}{\mu_{i}+s}$, $i=1,\ldots,N$, and all other elements equal to zero. We treat this kind of vector-valued functional equation with multiple recursive terms in an ongoing paper, since some additional technical requirements are needed \cite{dimir}. For the scalar version of such a kind of functional equation, see \cite{adan}.
\end{remark}

\section{Numerical examples}\label{num}
Consider the case where $N=2$. In the following, we assume exponentially distributed interarrival and service times, and we use the results derived in subsection \ref{xc}, and in Section \ref{fgm}. Our aim is to investigate the impact of auto-correlation and cross-correlation of the interarrivals and service times on the mean workload, as well as the impact of the dependence parameter $\theta$ in case of the additional dependence among interarrival and service times based on the FGM copula. The auto-correlation between $S_{m}$, $S_{m+n}$:
\begin{displaymath}
    \begin{array}{c}
      \rho(S_{m},S_{m+n})=\rho(S_{0},S_{n})=\frac{\sum_{i=1}^{N}\sum_{j=1}^{N}\pi_{i}(p_{i,j}^{(n)}-\pi_{j})\gamma_{i}\gamma_{j}}{\sum_{i=1}^{N}\pi_{i}s_{i}^{2}-(\sum_{i=1}^{N}\pi_{i}\gamma_{i})^{2}},\,n\geq 1,
    \end{array}
\end{displaymath}
with $\gamma_{i}$, $s_{i}^{2}$ be the mean and the second moment of the service time distribution, $p_{i,j}^{(n)}=P(Y_{n}=j|Y_{0}=i),\,i,j=1\geq 0,\,n\geq 0$. A similar expression can be derived for the auto-correlation between interarrival times, as well as for cross-correlation between $A_{n}$, $S_{n}$; see \cite{adan2, vlasioudep}. The effect of the autoregressive parameter $a$ on the number of product form terms that we need in order to derive the numerical results is also discussed. 

Numerical results show that we need a (relatively) small finite number of product form terms, i.e., iterations (depending on the values of the parameters and especially as $a$ decreases) in order to obtain the values of the unknown vector(s) $\tilde{v}$ (resp. $\tilde{v}^{(1)}$, $\tilde{v}^{(2)}$) in \eqref{lpo} (resp. in \eqref{lpo1}) ($\tilde{v}$ (resp. $\tilde{v}^{(1)}$, $\tilde{v}^{(2)}$) is (resp. are) unknown(s) in \eqref{siterr} (resp. in \eqref{siterr1}) and is (resp. are) derived as stated in Proposition \ref{prop1} (resp. Proposition \ref{prop2})), as well as to derive the Laplace-Stieltjes transform vector $\tilde{Z}(s)$ (resp. $\tilde{Z}(s;\theta)$). In doing that, we truncate the infinite sums of products at a specific value of $k$, such that the absolute maximum norm of the difference of two consecutive terms to be smaller than $10^{-7}$, e.g., for the derivation of the vectors $\tilde{v}^{(1)}$, $\tilde{v}^{(2)}$ using Proposition \ref{prop2}, let $u_{m,j}(n,k):=\prod_{d=0}^{k-1}U(a^{d+1}n\lambda_{j};\theta) \left( I-L_{m}(a^{k+1}n\lambda_{j}) \right)$, $n,m,j=1,2$, and $f_{j}(n,k):=\prod_{d=0}^{k}U(a^{d+1}n\lambda_{j};\theta)$. Then, we find the maximum $k_{m,j,n}$ such that $||u_{m,j}(n,k_{m,j,n})-u_{m,j}(n,k_{m,j,n}+1)||\leq 10^{-7}$, $n,m,j=1,2$, and the maximum $l_{j,n}$ such that $||f_{j}(n,l_{j,n})-f_{j}(n,l_{j,n}+1)||\leq 10^{-7}$, $n,j=1,2$. 
\paragraph{Example 1} Let $[\lambda_{1}\ \lambda_{2}]=[2\ 8]$, $[\mu_{1}\ \mu_{2}]=[4\ 10]\times\frac{1}{u}$, $u>0$. The parameter $u$ is used to explore the effect of increasing the expected service duration on the mean workload. We focus on two cases, i.e., case 1, where $P=\begin{pmatrix}
    0& 1\\
    1 &0
\end{pmatrix}$, and case 2, where $P=\begin{pmatrix}
    0.5& 0.5\\
    0.5 &0.5
\end{pmatrix}$. In such a scenario we have positive cross-correlation between the interarrival and service times. Note that in both cases $\pi_{1}=\pi_{2}=0.5$, although in case 1 we have autocorrelated service and interarrival times, while in case 2 there is no autocorrelation. Figure \ref{a1} shows that the mean workload is higher in the case with autocorrelation.
\begin{figure}[htp]
    \centering
    \includegraphics[scale=0.6]{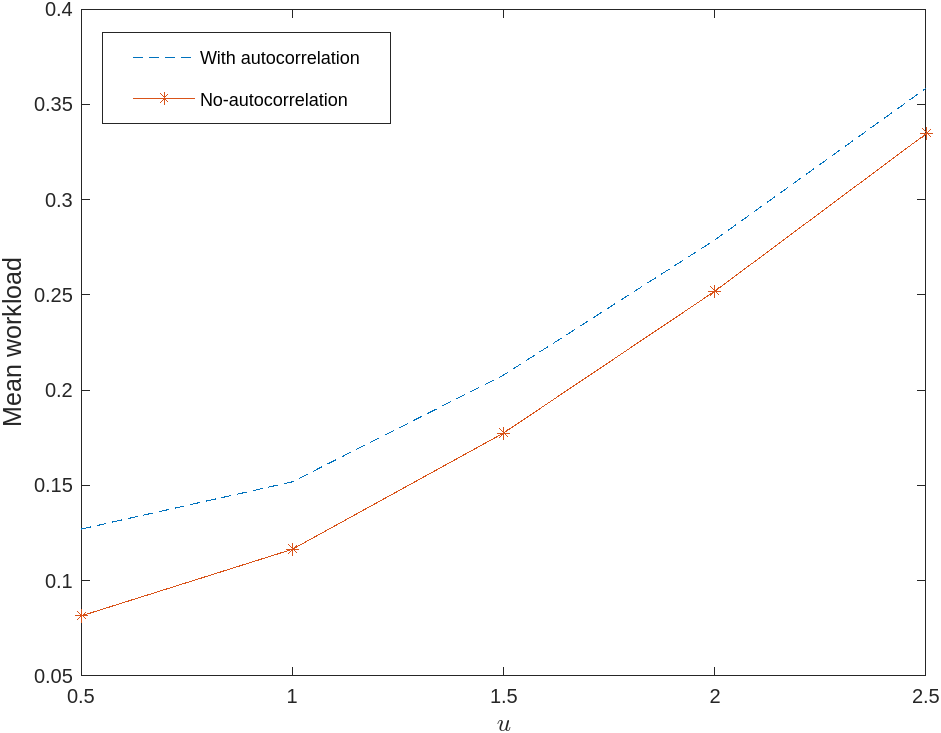}
    \caption{Positive cross-correlation: Mean workload as a function of parameter $u$ for $a=0.3$.}
    \label{a1}
\end{figure}

We now turn our attention to the case where there is an additional dependence among service times and interarrival times based on the FGM copula, by using the results derived in Section \ref{fgm} (in case there is no autocorrelation). We investigate first the impact of $a$ on the number of iterations that we need to obtain the numerical results. We observe that as we increase $a$ the number of product form terms needed in the summation \eqref{siterr1} to obtain the vectors $\tilde{v}^{(1)}$, $\tilde{v}^{(2)}$ increase very fast (see Tables \ref{tab1}, \ref{tab2}).
\begin{table}[!ht]
    \centering
    \begin{tabular}{||c| c| c| c| c| c| c| c|c||} 
 \hline
 $a$ & max $k_{1,1,1}$ & max $k_{1,2,1}$ & max $k_{2,1,1}$ & max $k_{2,2,1}$ & max $k_{1,1,2}$ & max $k_{1,2,2}$ & max $k_{2,1,2}$ & max $k_{2,2,2}$\\ [0.5ex] 
 \hline\hline
 0.1 & 7 & 7 & 8&8&8&7&7&9 \\ 
 \hline
 0.3 & 14 & 14& 15&15&14&13&13&15 \\
 \hline
 0.6 & 30& 30 & 32&32&34&29&29 &32\\
\hline
0.8& 67   & 67  &  60  &  60  &  76   & 71 &   48 &   48\\
  [1ex] 
 \hline
    \end{tabular}
    \caption{The effect of $a$ on the number of product form terms in the sum at the left hand side in \eqref{siterr1} for the case 2 when $u=2.5$.}
    \label{tab1}
\end{table}
\begin{table}[!ht]
    \centering
    \begin{tabular}{||c| c| c| c|c||} 
 \hline
 $a$ & max $l_{1,1}$ & max $l_{1,2}$ & max $l_{2,1}$ & max $l_{2,2}$ \\ [0.5ex] 
 \hline\hline
 0.1 & 7 & 7 & 8 &8\\ 
 \hline
 0.3 &13 &14& 14&15\\
 \hline
 0.6 & 30 & 31& 31 &31\\
 \hline
 0.8& 69 &   62 &   70 &   50\\
  [1ex] 
 \hline
    \end{tabular}
    \caption{The effect of $a$ on the number of product form terms in the second term at the left hand side in \eqref{siterr1} for the case 2 when $u=2.5$.}
    \label{tab2}
\end{table}

In Figure \ref{a11} we can see the impact of the additional dependence based on the FGM copula on the mean workload for $a=0.3$. We observed that the higher the dependence parameter $\theta$, the lower the mean workload. This is because, for example when the dependence parameter $\theta$ is negative, a long service time $S_{n}$ results in a short interarrival time $A_{n+1}$, which results in an increasing mean workload. Note that for $\theta=0$, i.e., the case where $F_{S,A|i,j}(y,x)=F_{S,j}(y)F_{A,i}(x)$, thus, $S_{n}$, $A_{n+1}$ are conditionally independent, given $Y_{n}$ and $Y_{n+1}$, the mean workload is between the curves for positive and negative $\theta$. Moreover, note that for $\theta=0$ the cross-correlation is equal to $0.1677$. For $\theta\in(0,1]$, the cross-correlation remains positive and increases (e.g., for $\theta=0.9$, the cross-correlation equals $0.3521$), but when $\theta$ becomes negative we may result in negative cross-correlation, e.g., for $\theta=-0.9$, the cross-correlation equals $-0.0168$, although for $\theta=-0.6$, the cross-correlation equals $0.0447$, while for $\theta=-0.8182$, cross-correlation almost vanishes. Thus, the dependence parameter $\theta$, may be used also as a tool for smoothing cross-correlation, for changing its sign or even as a tool for vanishing the cross-correlation. 
\begin{figure}[htp]
    \centering
    \includegraphics[scale=0.6]{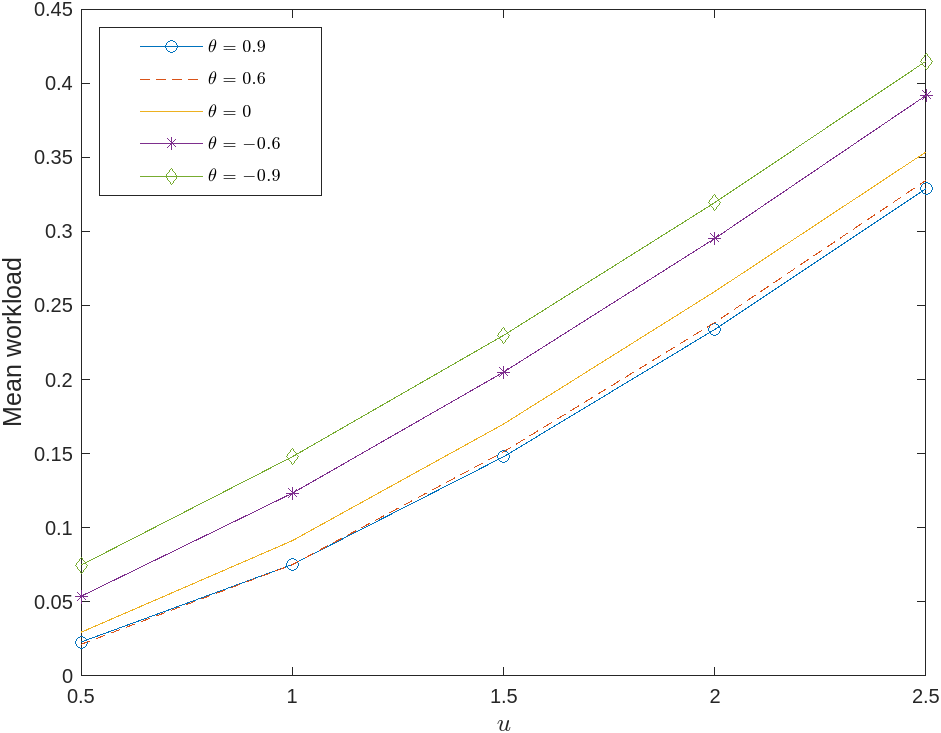}
    \caption{The impact of the dependence parameter $\theta$: Mean workload as a function of parameter $u$ for $a=0.3$.}
    \label{a11}
\end{figure}
\paragraph{Example 2}
Consider now the case where $[\lambda_{1}\ \lambda_{2}]=[2\ 8]$, $[\mu_{1}\ \mu_{2}]=[10\ 4]\times\frac{1}{u}$, $u>0$ and again explore the cases with auto-correlation (case 1) and no auto-correlation (case 2) considered above. In such a scenario, we have negative cross-correlation between the interarrival and service times. Figure \ref{a2} shows that the mean workload is slightly higher in the case with no auto-correlation. Clearly, as in example 1, similar observations can be made regarding the effect of the dependence parameter $\theta$, as well as the effect of $a$ on the number of iterations we need to obtain the numerical results.
\begin{figure}[htp]
   \centering
   \includegraphics[scale=0.6]{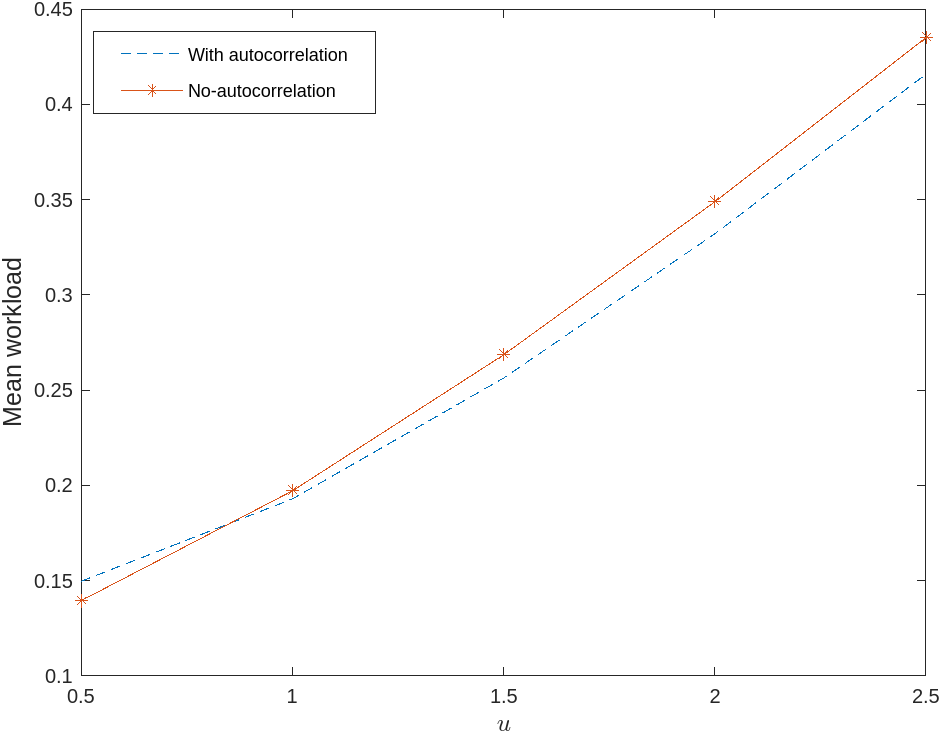}
   \caption{Negative cross-correlation: Mean workload as a function of parameter $u$.}
 \label{a2}
\end{figure}
\section{Concluding remarks and suggestions for future research}\label{con}
In this paper, strongly motivated by the work in \cite{box1}, we made a first step towards the development of a method for treating modulated recursions among random variables that lead to vector-valued fixed point functional equations of the form in \eqref{ji1}, \eqref{ji2}. In doing this, we focused on the detailed analysis of vector-valued reflected autoregressive processes, as well as of related processes that leads to a vector-valued fixed point equations of the form in \eqref{ji1}, \eqref{ji2}. We derived the Laplace-Stieltjes transform vector of the steady-state workload by iteratively solving a fixed point functional equation. 

In particular, we provided stationary, as well as transient analysis for a series of processes that satisfy a recursion of the form $W_{n+1}=[aW_{n}+S_{n}-A_{n+1}]^{+}$, $a\in(0,1)$ for which the distributions of $S_{n},A_{n+1}$ depend on the states of a background Markov chain. We investigated the case where given the states of the Markov chain, $S_{n},A_{n+1}$ are independent, as well as the case where they are dependent based either on FGM copula, or on a bilateral matrix exponential distribution. To our best knowledge, there are no analytic results referring to the derivation of the Laplace-Stieltjes transform of the steady-state workload in a Markov-dependent single server queue where interarrival times and service times are also dependent based on a copula. Moreover, we further investigated two related Markov-dependent models, the analysis of which leads to a similar vector-valued fixed point functional equation.

From technical point of view, by adopting the FGM copula, on one hand, we consider additional dependence among the service time and the subsequent interarrival time, and thus, provided more realistic scenarios, since assuming that service and interarrival times are conditionally independent (given the state of the background Markov chain) is restrictive. On the other hand, the number of unknown terms when solving \eqref{gh} is now doubled (we now have to derive $\tilde{v}^{(k)}$, $k=1,2$ instead of $\tilde{v}$ in the case we drop FGM copula), thus, it increases the computational effort. We also numerically illustrated the effect of the dependence via FGM copula on the mean workload. We observed that the higher the value of the dependence parameter, the lower the mean workload. Moreover, it seems that the dependence parameter can be used as a tool for smoothing cross-correlation, or for changing its sign (from positive to negative and vise-versa), e.g., in the case of exponentially distributed interarrival and service times, we noted that cross-correlation is a linear function of the dependence parameter $\theta$ (see Example 1 in Section \ref{num}). Of particular interest was the comparison of the cases $\theta=0$ (independent copula) and $\theta\in[-1,0)\cup(0,1]$. In example 1, for $\theta=0$, the cross-correlation is equal to $0.1677$. However, by setting $\theta=-0.9$, the cross-correlation drops to $-0.0168$. Moreover, for $\theta=-0.8182$, the cross-correlation almost vanishes. More work is needed on the investigation of the effect of the dependence parameter on the mean workload, as well as on its relation with cross-correlation, and is postponed as a future work. Finally, we observed that as the autoregressive parameter $a$
increases, the number of iterations that we need to obtain the numerical results increases rapidly. 

In a future work, we plan to thoroughly investigate vector-valued versions of $W_{n+1}=[J_{n}(W_{n})+L_{n+1}]^{+}$, and consider their possible connection with the corresponding integer-valued processes; see \cite{box2} for the scalar case. Moreover, it would be interesting to consider scaling limits and
asymptotics. More precisely, it would be of great interest to introduce appropriate scalings, and apply a similar diffusion
analysis for the vector-valued reflected autoregressive process as the one done for the scalar case in \cite{box1}. A quite challenging task would be to cope with the problem where the autoregressive parameter is either affected by the background Markov chain \cite{dimir}, or it is a random variable.

\section*{Acknowledgements} The author is grateful to the Editors and the anonymous Reviewers for the insightful remarks and the very careful reading, which helped to improve the original exposition. The author gratefully acknowledges the Empirikion Foundation, Athens, Greece (\href{https://www.empirikion.gr/}{www.empirikion.gr}) for the financial support of this work.
\bibliographystyle{abbrv}

\bibliography{mybibfile}

\begin{thebibliography}{10}

\bibitem{adan}
I.~Adan, O.~Boxma, and J.~Resing.
\newblock Functional equations with multiple recursive terms.
\newblock {\em Queueing Systems}, 102(1-2):7--23, 2022.

\bibitem{adan2}
I.~Adan and V.~Kulkarni.
\newblock Single-server queue with {M}arkov-dependent inter-arrival and service times.
\newblock {\em Queueing Systems}, 45:113--134, 2003.

\bibitem{albbox}
H.~Albrecher and O.~J. Boxma.
\newblock A ruin model with dependence between claim sizes and claim intervals.
\newblock {\em Insurance: Mathematics and Economics}, 35(2):245--254, 2004.

\bibitem{albteu}
H.~Albrecher and J.~L. Teugels.
\newblock Exponential behavior in the presence of dependence in risk theory.
\newblock {\em Journal of Applied Probability}, 43(1):257--273, 2006.

\bibitem{allan}
G.~Allan, G.~R. Allan, and H.~G. Dales.
\newblock {\em Introduction to {B}anach spaces and algebras}.
\newblock Number~20 in Oxford Graduate Texts in Math. Oxford University Press, 2011.

\bibitem{asmalb}
S.~Asmussen and H.~Albrecher.
\newblock {\em Ruin {P}robabilities}, volume~14.
\newblock World scientific, 2010.

\bibitem{asmkella}
S.~Asmussen and O.~Kella.
\newblock A multi-dimensional martingale for {M}arkov additive processes and its applications.
\newblock {\em Advances in Applied Probability}, 32(2):376--393, 2000.

\bibitem{bad}
E.~S. Badila, O.~J. Boxma, and J.~A.~C. Resing.
\newblock Queues and risk processes with dependencies.
\newblock {\em Stochastic Models}, 30(3):390--419, 2014.

\bibitem{barges}
M.~Barges, H.~Cossette, S.~Loisel, and E.~Marceau.
\newblock On the moments of aggregate discounted claims with dependence introduced by a {FGM} copula.
\newblock {\em ASTIN Bulletin: The Journal of the IAA}, 41(1):215--238, 2011.

\bibitem{boubox}
R.~J. Boucherie and O.~J. Boxma.
\newblock The workload in the {$M/G/1$} queue with work removal.
\newblock {\em Probability in the Engineering and Informational Sciences}, 10(2):261--277, 1996.

\bibitem{bouboxsig}
R.~J. Boucherie, O.~J. Boxma, and K.~Sigman.
\newblock A note on negative customers, {$GI/G/1$} workload, and risk processes.
\newblock {\em Probability in the Engineering and Informational Sciences}, 11(3):305--311, 1997.

\bibitem{box3}
O.~Boxma, A.~L{\"o}pker, and M.~Mandjes.
\newblock On two classes of reflected autoregressive processes.
\newblock {\em Journal of Applied Probability}, 57(2):657--678, 2020.

\bibitem{box2}
O.~Boxma, A.~L{\"o}pker, M.~Mandjes, and Z.~Palmowski.
\newblock A multiplicative version of the {L}indley recursion.
\newblock {\em Queueing Systems}, 98:225--245, 2021.

\bibitem{shots}
O.~Boxma and M.~Mandjes.
\newblock Shot-noise queueing models.
\newblock {\em Queueing Systems}, 99(1):121--159, 2021.

\bibitem{boxman}
O.~Boxma and M.~Mandjes.
\newblock Queueing and risk models with dependencies.
\newblock {\em Queueing Systems}, 102(1-2):69--86, 2022.

\bibitem{box1}
O.~Boxma, M.~Mandjes, and J.~Reed.
\newblock On a class of reflected {AR}(1) processes.
\newblock {\em Journal of Applied Probability}, 53(3):818--832, 2016.

\bibitem{cive}
I.~Civelek, B.~Biller, and A.~Scheller-Wolf.
\newblock Impact of dependence on single-server queueing systems.
\newblock {\em European Journal of Operational Research}, 290(3):1031--1045, 2021.

\bibitem{combe}
M.~Comb\'e.
\newblock {\em Queueing models with dependence structures}.
\newblock PhD thesis, Tilburg University, 1995.

\bibitem{cossette1}
H.~Cossette, E.~Marceau, and F.~Marri.
\newblock Analysis of ruin measures for the classical compound {P}oisson risk model with dependence.
\newblock {\em Scandinavian Actuarial Journal}, 2010(3):221--245, 2010.

\bibitem{smit}
J.~H. De~Smit.
\newblock The queue {$GI/M/s$} with customers of different types or the queue {$GI/H_{m}/s$}.
\newblock {\em Advances in Applied Probability}, 15(2):392--419, 1983.

\bibitem{denoi}
M.~Denuit, J.~Dhaene, M.~Goovaerts, and R.~Kaas.
\newblock {\em Actuarial theory for dependent risks: measures, orders and models}.
\newblock John Wiley \& Sons, 2006.

\bibitem{die}
J.~Dieudonne.
\newblock {\em Foundations of Modern Analysis}.
\newblock Academic Press, New York, 1969.

\bibitem{dimir}
I.~Dimitriou.
\newblock On vector-valued functional equations with multiple recursive terms.
\newblock {\em Under submission}, 2025.

\bibitem{dimi}
I.~Dimitriou and D.~Fiems.
\newblock Some reflected autoregressive processes with dependencies.
\newblock {\em Queueing Systems}, 106(1):67--127, 2024.

\bibitem{durante}
F.~Durante, C.~Sempi, et~al.
\newblock {\em Principles of copula theory}, volume 474.
\newblock CRC press Boca Raton, FL, 2016.

\bibitem{gu}
A.~V. Gorbunova and A.~V. Lebedev.
\newblock Bivariate distributions of maximum remaining service times in fork-join infinite-server queues.
\newblock {\em Problems of Information Transmission}, 56:73--90, 2020.

\bibitem{hoo}
D.~Huang.
\newblock On a modified version of the {L}indley recursion.
\newblock {\em Queueing Systems}, 105(3):271--289, 2023.

\bibitem{jainsig}
G.~Jain and K.~Sigman.
\newblock Generalizing the {P}ollaczek-{K}hintchine formula to account for arbitrary work removal.
\newblock {\em Probability in the Engineering and Informational Sciences}, 10(4):519--531, 1996.

\bibitem{joe}
H.~Joe.
\newblock {\em Multivariate models and multivariate dependence concepts}.
\newblock CRC press, 1997.

\bibitem{kim}
J.-M. Kim, Y.-S. Jung, E.~A. Sungur, K.-H. Han, C.~Park, and I.~Sohn.
\newblock A copula method for modeling directional dependence of genes.
\newblock {\em BMC bioinformatics}, 9:1--12, 2008.

\bibitem{lanc}
P.~Lancaster and M.~Tismenetsky.
\newblock {\em The theory of matrices: with applications}.
\newblock Elsevier, 1985.

\bibitem{lei}
L.~Lei, J.-Q. Hu, and C.~Zhu.
\newblock Discrete-event stochastic systems with copula correlated input processes.
\newblock {\em IISE Transactions}, 54(4):321--331, 2022.

\bibitem{mai}
J.-F. Mai and M.~Scherer.
\newblock {\em Financial engineering with copulas explained}.
\newblock Springer, 2014.

\bibitem{nelsen}
R.~B. Nelsen.
\newblock {\em An introduction to copulas}.
\newblock Springer, 2006.

\bibitem{rai}
Y.~Raaijmakers, H.~Albrecher, and O.~Boxma.
\newblock The single server queue with mixing dependencies.
\newblock {\em Methodology and Computing in Applied Probability}, 21:1023--1044, 2019.

\bibitem{paman}
P.~Ramankutty.
\newblock Extensions of {L}iouville theorems.
\newblock {\em Journal of Mathematical Analysis and Applications}, 90(1):58--63, 1982.

\bibitem{regte}
G.~J.~K. Regterschot and J.~H.~A. de~Smit.
\newblock The queue {$M/G/1$} with {M}arkov modulated arrivals and services.
\newblock {\em Mathematics of Operations Research}, 11(3):465--483, 1986.

\bibitem{rudin}
W.~Rudin.
\newblock {\em Functional Analysis}.
\newblock Tata McGraw-Hill, New York, 1974.

\bibitem{schass}
R.~Schassberger.
\newblock {\em Warteschlangen}.
\newblock Springer-Verlag, Wien, 1973.

\bibitem{trapa}
S.~Thapa and Y.~Q. Zhao.
\newblock Construction of new copulas with queueing application.
\newblock {\em arXiv preprint arXiv:2101.12401}, 2021.

\bibitem{tit}
E.~C. Titchmarsh.
\newblock {\em The theory of functions}.
\newblock Oxford University Press, USA, 1939.

\bibitem{vlasiou}
M.~Vlasiou.
\newblock {\em Lindley-type recursions}.
\newblock PhD thesis, Technische Universiteit Eindhoven, 2006.

\bibitem{vlasioudep}
M.~Vlasiou, I.~Adan, and O.~Boxma.
\newblock A two-station queue with dependent preparation and service times.
\newblock {\em European Journal of Operational Research}, 195(1):104--116, 2009.

\bibitem{wang}
Q.~Wang, W.~Dai, and J.~Hu.
\newblock Queues with auto-correlated inter-arrival times.
\newblock {\em Available at SSRN 4615871}, 2023.

\bibitem{wang1}
Q.~Wang and J.-Q. Hu.
\newblock Queues with correlated inter-arrival and service times.
\newblock {\em Queueing Systems}, 108:1--30, 2024.

\bibitem{wu2}
C.~Wu, A.~Bassamboo, and O.~Perry.
\newblock Service system with dependent service and patience times.
\newblock {\em Management Science}, 65(3):1151--1172, 2019.

\bibitem{wu1}
C.~Wu, A.~Bassamboo, and O.~Perry.
\newblock When service times depend on customers’ delays: A relationship between two models of dependence.
\newblock {\em Operations Research}, 70(6):3345--3354, 2022.

\end{thebibliography}
\end{document}